\newtheorem{theorem}{Theorem}
\newtheorem{lemma}[theorem]{Lemma}
\newtheorem{proposition}[theorem]{Proposition}
\newtheorem{remark}[theorem]{Remark}
\newtheorem{definition}{Definition}
\numberwithin{equation}{section}
\newcommand{\R}{\mathbb R}
\newcommand{\N}{\mathbb N}
\newcommand{\Tr}{\text{Tr}_\Omega}
\newcommand{\SD}{\Sigma_{\mathcal D}}
\newcommand{\SN}{\Sigma_{\mathcal N}}
\newcommand{\X}{\mathcal{X}_{\Sigma_{\mathcal{D}}^*}^s(\mathcal{C}_{\Omega})}
\newcommand{\red}[1]{\textcolor{red}{#1}} %For comments
\begin{document}
	
\title[Positive solutions for a weighted mixed critical problem]{Positive solutions for a weighted critical problem with mixed boundary conditions}
	
\author{Alejandro Ortega}
\address[A. Ortega]{Departamento de Matemáticas Fundamentales, Facultad de Ciencias, Universidad Nacional de Educación a Distancia (UNED), 28040 Madrid, Spain}
\email{\tt alejandro.ortega@mat.uned.es}

\author{Luca Vilasi}
\address[L. Vilasi]{Department of Mathematical and Computer Sciences, Physical Sciences and Earth Sciences, University of Messina,
Viale F. Stagno d’Alcontres, 31 - 98166 Messina, Italy}
\email{\tt lvilasi@unime.it}
	
\author{Youjun Wang}
\address[Y. Wang]{Department of Mathematics, South China University of Technology, Guangzhou, 510640, China}
\email{\tt scyjwang@scut.edu.cn}

\subjclass[2020]{(Primary) 35R11, 35B33, 35B09; (Secondary) 49J35, 35A15, 35S15.}

\keywords{Fractional Laplacian, Mixed boundary conditions, Critical Sobolev exponent, Multiplicity of solutions, Variational methods, Nehari Manifold.}
\thanks{Corresponding author: alejandro.ortega@mat.uned.es}

\begin{abstract}
We analyze the existence and multiplicity of positive solutions to a nonlocal elliptic problem involving the spectral fractional Laplace operator endowed with homogeneous mixed Dirichlet-Neumann boundary conditions and weighted critical nonlinearities. By means of variational methods and the Nehari manifold approach, we deduce the existence of multiple positive solutions under some assumptions on the behavior of the weight function around its maximum points. Such a behavior, formulated in terms of some rate growth, is explicitly determined and depends on the relation between the dimension, the order of the operator and the subcritical perturbation. In this way we extend and improve the results in \cite{lialiuzhatan2016existence}, dealing with the Dirichlet problem for the classical Laplace operator, to the nonlocal setting involving mixed boundary conditions.
\end{abstract}
	
\maketitle
	
%%%%%%%%%%%%%%%%%%%%%%%%%%%%%%%%%%
\section{The problem}
In this paper we analyze the existence and multiplicity of solutions to the nonlocal problem
\begin{equation}\label{problem}\tag{$P_{\lambda,q}$}
	\left\{
	\begin{array}{ll} 
(-\Delta)^s u = \lambda u^q + Q(x)u^{2^*_s-1} & \text{ in } \Omega,\smallskip\\
 u>0	 & \text{ in } \Omega,\smallskip\\
  B(u)=0 & \text{ on } \partial\Omega,
	\end{array}
	\right.
\end{equation}
where $\Omega\subset\R^N$ is a bounded domain with a smooth boundary, $N>2s$, $\frac{1}{2}<s<1$,  $q\in[1,2^*_s-1)$ and $2^*_s:=2N/(N-2s)$ is the fractional critical Sobolev exponent. The operator $(-\Delta)^s$ is the \textit{spectral} fractional Laplacian on $\Omega$ endowed with mixed Dirichlet-Neumann boundary conditions
$$
B(u):=u\chi_{\SD} +\frac{\partial u}{\partial\nu}\chi_{\SN},
$$
where $\nu$ is the outward unit normal to $\partial\Omega$, $\chi_A$ denotes the characteristic function of the set $A\subset\partial\Omega$, and $\Omega$ satisfies the following set of assumptions:
\begin{itemize}
	%\item[$(\Omega_1)$]{\color{red} $\Omega\subset\R^N$ is a bounded Lipschitz domain; SOBRA}
	\item[$(\Omega_1)$] $\SD$ and $\SN$ are smooth $(N-1)$--dimensional submanifolds of $\partial\Omega$;
	\item[$(\Omega_2)$] $\SD$ is a closed manifold with positive $(N-1)$--dimensional Lebesgue measure, say $|\SD|=\mu\in(0,|\partial\Omega|)$;
	\item[$(\Omega_3)$] $\SD\cap\SN=\emptyset$, $\SD\cup\SN=\partial\Omega$ and $\SD\cap\overline{\Sigma}_\mathcal{N}=\Gamma$, where $\Gamma$ is a smooth $(N-2)$--dimensional submanifold of $\partial\Omega$.
\end{itemize}
%{\color{blue} Due to the embedding of the associated fractional order Sobolev space, the range $\frac{1}{2} < s < 1$ is the correct fractionality range for mixed boundary problems, see Remark \ref{rem:range_s}.} 
On the weight $Q$, we assume that $Q\in C^0(\overline\Omega)$, $Q(x)>0$, and
\begin{itemize}
	\item[$(Q_1)$] there exists $a^0\in\SN$ such that 
	$$Q(a^0)=\max_{\overline{\Omega}}Q(x):= Q_{M}\qquad\text{and}\qquad
	Q(x)-Q(a^0)=o\left(|x-a^0|^\alpha\right)\ \ \text{as}\ x\to a^0,
	$$
$$
\text{for some } \alpha\in(0,N) \text{ in the cases }  N=2,3 \;\text{ and }\; 1<q\leq \displaystyle\frac{6s-N}{N-2s}, \qquad (i);
$$
for
$$
	\alpha=\left\lbrace 
	\begin{array}{llr}
		%\bar\alpha>0 & \text{ for } N=2,3 \;\text{ and }\; 1<q\leq \displaystyle\frac{6s-N}{N-2s}, & (i) \\ \smallskip
		\displaystyle N-\frac{(N-2s)(q+1)}{2} & \text{ if } N=2,3 \;\text{ and }\; \displaystyle\frac{6s-N}{N-2s}<q<\frac{N+2s}{N-2s}, & (ii)\\ \smallskip
		\displaystyle N-\frac{(N-2s)(q+1)}{2} & \text{ if } N\geq 4 \;\text{ and }\; \displaystyle 1\leq q < \frac{N+2s}{N-2s}. & (iii) 
	\end{array}	
	\right.
	$$
\end{itemize}
In our approach to \eqref{problem}, weights leading to multiplicity of solutions will behave as follows, 
\begin{itemize}
	\item[$(Q_2)$] there exist $a^1,\ldots,a^k\in\SN$, strict global maximizers of $Q$, such that, for $ i=1,\ldots,k$,
	$$Q(a^i)=\max_{\overline{\Omega}}Q(x):= Q_{M}\qquad\text{and}\qquad Q(x)-Q(a^i)=o\left(|x-a^0|^\alpha\right)\ \ \text{as}\ x\to a^i,$$
where 
$$
	\alpha=\left\lbrace 
	\begin{array}{llr}
		%{\color{red} N-2s }&{\color{red} \text{ for } N=2,3 \;\text{ and }\; 1<q\leq \displaystyle\frac{6s-N}{N-2s}} & (i), \\ \smallskip
		\displaystyle N-\frac{(N-2s)(q+1)}{2} & \text{ for } N=2,3 \;\text{ and }\; \displaystyle\frac{6s-N}{N-2s}<q<\frac{N+2s}{N-2s}, & (i)\\[7pt] 
		\displaystyle N-\frac{(N-2s)(q+1)}{2} & \text{ for } N\geq 4 \;\text{ and }\; \displaystyle 1\leq q < \frac{N+2s}{N-2s}. & (ii)
	\end{array}	
	\right.
	$$
\end{itemize}

This work find its motivation in \cite{lialiuzhatan2016existence} which deals with the problem driven by the classical Laplacian operator under Dirichlet boundary conditions, and from which we borrowed our methods of investigation. %The methods adopted for deducing our results, both of existence  and of multiplicity, are borrowed from \cite{lialiuzhatan2016existence} which deals with a problem driven by the classical Laplacian operator under Dirichlet boundary conditions. 
However, the mixed boundary condition framework and the nonlocality of the operator in \eqref{problem} add several difficulties in the study, making it particularly interesting. In terms of the hypotheses, a meaningful difference with respect to \cite{lialiuzhatan2016existence} is the fact that  
the weight $Q$ is assumed to attain its maximum values on the Neumann boundary $\Sigma_{\mathcal{N}}$ instead of in the interior of the domain $\Omega$, according to the underlying concentration phenomena. In addition, we extend the range of $\alpha$ in the case $(Q_1)-(i)$, thus allowing more general weights $Q$. 

A typical issue in critical problems is the concentration phenomenon associated with the lack of compactness of the Sobolev embedding at the critical exponent which, in turns, is closely related to the nonexistence of solutions to the associated pure critical problem. Nevertheless, this nonexistence result has a positive counterpart which has been extensively exploited since the seminal work by Brezis and Nirenberg (cf. \cite{BN}): by considering a lower order perturbation, we can find an energy threshold $c^*>0$ below which the compactness holds and, next,  construct fibers of the associated energy functional with energy below $c^*$. In the classical or local setting, these fibers are found with the aid of the Aubin-Talenti instantons, namely
\begin{equation*}
	v_{\varepsilon,x_0}(x)=\frac{\varepsilon^{\frac{N-2}{2}}}{(\varepsilon^2+|x-x_0|^2)^{\frac{N-2}{2}}},\qquad \varepsilon>0,
\end{equation*}
which are extremal functions for the Sobolev inequality 
\[S(N)\|u\|_{L^{2^*}(\Omega)}^2\leq\|\nabla u\|_{L^2(\Omega)}^2,\quad \text{ where } \quad S(N)=\pi N(N-2)\left(\frac{\Gamma(\frac{N}{2})}{\Gamma(N)}\right)^{\frac{2}{N}}.\]
Thus, once we have found the critical threshold determining the compactness, we can center a function $v_{\varepsilon,x_0}(x)$ at a point $x_0\in\Omega$ and, by concentrating enough the function around $x_0$ (i.e., by taking $\varepsilon>0$ small enough), we can construct fibers for the associated energy funcional below $c^*$ and then conclude, for instance, by employing a mountain pass type argument.

Following this scheme, the existence (and multiplicity) of solutions to problem \eqref{problem}  strongly depends, therefore, on the behavior of the pure critical problem
\begin{equation}\label{crit_prob}
	\left\{
	\begin{array}{ll} 
(-\Delta)^s u = u^{2^*_s-1} & \text{ in } \Omega,\smallskip\\
 u>0	 & \text{ in } \Omega,\smallskip\\
 B(u)=0 & \text{ on } \partial\Omega.
	\end{array}
	\right.
\end{equation}
In the case of homogeneous Dirichlet boundary conditions ($\Sigma_{\mathcal{D}}=\partial\Omega$ and $\Sigma_{\mathcal{N}}=\emptyset$), problem \eqref{crit_prob} has no solution, i.e., the fractional Sobolev constant $S(s,N)$ (see \eqref{soboconst}) is not attained. This is in contrast to the mixed setting where there is a solution to \eqref{crit_prob} if the Dirichlet part is small enough (cf. \cite[Theorem 1.1-(iii)]{colort2019the}). As a consequence, the corresponding Sobolev constant $S(\Sigma_{\mathcal{D}})$ (see \eqref{defi_sob}) is attained. More precisely, by \cite[Proposition 3.6]{colort2019the}, we have 
\begin{equation}\label{mix_i_sob}
S(\Sigma_{\mathcal{D}})\leq 2^{-\frac{2s}{N}}S(s,N),
\end{equation}
and, by \cite[Theorem 2.9]{colort2019the}, if $S(\Sigma_{\mathcal{D}})<2^{-\frac{2s}{N}}S(s,N)$, then $S(\Sigma_{\mathcal{D}})$ is attained. Thus, as far as the existence of solutions to \eqref{problem}  is concerned, we are naturally led to distinguish two cases: either $S(\Sigma_{\mathcal{D}})=2^{-\frac{2s}{N}}S(s,N)$ or $S(\Sigma_{\mathcal{D}})<2^{-\frac{2s}{N}}S(s,N)$. 

In the first case, there is no solution to the critical problem \eqref{crit_prob} and we can construct fibering maps for the associated energy functional below a certain critical threshold. This will be done by using the fractional analogue to Aubin-Talenti instantons, namely
\begin{equation*}
	u_{\varepsilon,x_0}(x)=\frac{\varepsilon^{\frac{N-2s}{2}}}{(\varepsilon^2+|x-x_0|^2)^{\frac{N-2s}{2}}},\quad \varepsilon>0,
\end{equation*}
which are extremal functions for the fractional Sobolev inequality (see \eqref{sobolev}). To summarize, our approach works as follows: first, we find a critical threshold below which the compactness is guaranteed; next, we center an instanton   $u_{\varepsilon,x_0}(x)$ at a point $x_0\in\Sigma_{\mathcal{N}}$ where $Q$ attains a maximum, say $x_0=a^i$. Finally, thanks to the behavior of $Q$ around $a^i$, we can construct fibers whose energy is lower than the critical threshold. 

In this regard, compared again to \cite{lialiuzhatan2016existence}, we carry out the asymptotic analysis of the maximizers of such fibers, thought as functions of $\varepsilon$ and $\lambda$, providing explicit bounds for them  (see Lemma \ref{propvarphilambda} below), this fact being decisive in our arguments.

This is the underlying idea of our main result, Theorem \ref{thmexistence}, in which the existence of solutions is deduced for different ranges of $\lambda$ and $q$ and, according to assumption $(Q_1)$, for different rates of growth of $Q$ near $a^0$.

On the contrary, if $S(\Sigma_{\mathcal{D}})<2^{-\frac{2s}{N}}S(s,N)$, the situation is more restrictive as in this case we have neither explicit expressions nor explicit estimates for the extremal functions at which $S(\Sigma_{\mathcal{D}})$ is attained, as it happens with the Aubin-Talenti instantons (see \eqref{estimtruncextremals} and Lemma \ref{Lpnormextremals}). As a consequence, the argument employed for the former case no longer works and the linear case $q=1$ in \eqref{problem} has to be ruled out. 

Considering instead the issue of the multiplicity of solutions, following again \cite{lialiuzhatan2016existence}, we seek for minimizers of the energy functional $J_\lambda$ of the extended problem (see \eqref{extorigprobl}) on suitable submanifolds of its Nehari manifold, defined in terms of a barycenter map. As stated in Proposition \ref{tildemlambdai}, we are able to compare the infimum of $J_\lambda$ on one of such submanifolds with the critical threshold only for small values of $\lambda$ and this, again, forces us to exclude the inequality case in \eqref{mix_i_sob} (as illustrated in Section \ref{sec:existence}, in this setting we guarantee that the supremum of the fibering maps is below the critical compactness threshold only for large values of $\lambda$).

This work continues an intensive line of research dealing with the existence and multiplicity of solutions to semilinear concave-convex critical elliptic problems initiated with the seminal works by Brezis and Nirenberg \cite{BN} and  Ambrosetti, Brezis and Cerami \cite{Ambrosetti1994}. Therefore, it is complicated to give a complete list of references. Among the works addressing semilinear nonhomogeneous elliptic problems we refer to \cite{AAP2000, CZ1996, CPM2004, Escobar1987, Hirano2001, Li2010, Lin2012, Liao2018} for the classical Laplace operator under Dirichlet boundary conditions;  to \cite{Barrios2012, Barrios2015, Bhakta2017, Kuhestani2019} for nonlocal operators with Dirichlet boundary conditions, and to \cite{ort2023concave} for the critical concave-convex problem for the spectral fractional Laplacian under mixed Dirichlet-Neumann boundary conditions.

The paper is organized as follows. In Section \ref{funcsett} we establish the variational setting of \eqref{problem} and, in particular, reformulate it in terms of the $s$-harmonic extension, recalling some preliminary facts that will be needed in the rest of our paper. In Section \ref{sec:existence} we focus on the existence of solutions according to the scheme illustrated before and state and prove our main results, Theorem \ref{thmexistence} and its companion Theorem \ref{thmexistence2}. The final Section \ref{sec:multiplicity} is devoted to the multiplicity issue addressed in Theorem \ref{thmksolutions}; the notion of barycenter map and the minimization on submanifolds of the Nehari manifolds allow us to deduce the existence of as many solutions as the number of maximum points of the weight function $Q(x)$.
%%%%%%%%%%%%%%%%%%%%%%%%%%
\section{Functional setting and preliminaries}\label{funcsett}
The fractional Laplace operator $(-\Delta)^s$ considered in this work is defined through the spectral decomposition of the classical Laplace operator under mixed boundary conditions. Let $(\lambda_i,\varphi_i)$ be the eigenvalues and the eigenfunctions (normalized with respect to the $L^2(\Omega)$-norm) of $-\Delta$ with homogeneous mixed Dirichlet-Neumann boundary conditions, respectively. Then $(\lambda_i^s,\varphi_i)$ are the eigenvalues and the eigenfunctions of $(-\Delta)^s$. Consequently, given two smooth functions %$u_i(x)$, $i=1,2$,
$$
u_i(x)=\sum_{j\geq1}\langle u_i,\varphi_j\rangle_{2}\varphi_j(x), \quad i=1,2,
$$
where $\langle u,v\rangle_2=\displaystyle\int_{\Omega}uv\,dx$ is the standard scalar product on $L^2(\Omega)$, one has
\begin{equation}\label{pre_prod}
	\langle(-\Delta)^s u_1, u_2\rangle_{2} = \sum_{j\ge 1} \lambda_j^s\langle u_1,\varphi_j\rangle_{2} \langle u_2,\varphi_j\rangle_{2},
\end{equation}
that is, the action of the fractional operator on a smooth function $u_1$ is given by
\begin{equation*}%\label{action}
	(-\Delta)^su_1=\sum_{j\ge 1} \lambda_j^s\langle u_1,\varphi_j\rangle_{2}\varphi_j.
\end{equation*}
As a consequence, the {\it spectral} fractional Laplace operator $(-\Delta)^s$ is well defined in the following Hilbert space of functions that vanish on $\SD$:
\begin{equation*}
	H_{\Sigma_{\mathcal{D}}}^s(\Omega)=\left\{u=\sum_{j\ge 1} a_j\varphi_j\in L^2(\Omega):\ u=0\ \text{on }\Sigma_{\mathcal{D}},\ \|u\|_{H_{\Sigma_{\mathcal{D}}^s}(\Omega)}^2:=
	\sum_{j\ge 1} a_j^2\lambda_j^s<+\infty\right\}.
\end{equation*}
%Thus, given $u\in H_{\SD}^s(\Omega)$, it follows by definition that
%\begin{equation*}%\label{H_norm}
%	\left\| u\right\|_{H_{\SD}^s(\Omega)}=\left\| (-\Delta)^{s/2} u\right\|_{L^2(\Omega)}.
%\end{equation*}
\begin{remark}%\label{rem:range_s}
	{\rm By \cite[Theorem 11.1]{Lions1972}, if $0<s\le \frac{1}{2}$, then $H_0^s(\Omega)=H^s(\Omega)$ and, thus, also
		$H_{\Sigma_{\mathcal{D}}}^s(\Omega)=H^s(\Omega)$, while for $\frac 12<s<1$, $H_0^s(\Omega)\subsetneq H^s(\Omega)$. Therefore, the range $\frac 12<s<1$ ensures $H_{\Sigma_{\mathcal{D}}}^s(\Omega)\subsetneq H^s(\Omega)$ and it provides us with the appropriate functional space for the mixed boundary problem \eqref{problem}.
	}
\end{remark}
Taking \eqref{pre_prod} in mind, the norm $\left\| \cdot\right\|_{H_{\SD}^s(\Omega)}$ is induced by the scalar product
\begin{equation*}%\label{sc_prod}
	\langle u_1,u_2\rangle_{H_{\Sigma_{\mathcal{D}}}^s}=\left\langle (-\Delta)^su_1,u_2\right\rangle_{2}=\langle (-\Delta)^{\frac{s}{2}}u_1,(-\Delta)^{\frac{s}{2}}u_2\rangle_{2}=\left\langle u_1,(-\Delta)^su_2\right\rangle_{2},\quad 
\end{equation*}
for all $u_1,u_2\in H_{\SD}^s(\Omega)$. The above chain of equalities can be simply stated as an integration-by-parts like formula. Thus, we say that $u\in H_{\SD}^s(\Omega)$ is a weak solution to \eqref{problem} if,
		\begin{equation*}%\label{weak_solution}
			\int_\Omega (-\Delta)^{s/2}u (-\Delta)^{s/2} v dx =\int_\Omega Q(x) u^{2^*_s-1}v dx + \lambda\int_\Omega u^q v dx,\quad\text{for all}\ v\in H_{\SD}^s(\Omega).
		\end{equation*}
The energy functional $I_\lambda:H_{\Sigma_{\mathcal{D}}}^s(\Omega)\to\R$ associated with \eqref{problem} is given by
\begin{equation*}%\label{functional_down}
	I_\lambda(u):=\frac12\int_{\Omega}|(-\Delta)^{\frac{s}{2}}u|^{2}dx-\frac{1}{2^*_s}\int_{\Omega}Q(x)|u|^{2^*_s}dx-\frac{\lambda}{q+1}\int_{\Omega} |u|^{q+1}dx.
\end{equation*}
Plainly, positive critical points of $I_\lambda$ corresponds to solutions of \eqref{problem}.

In addition to the above definition, there is another useful equivalent characterization of the fractional Laplacian in terms of the so-called Dirichlet-to-Neumann operator (cf. \cite{bracoldepsan2013a,Cabre2010, cafsil2007an, Capella2011, Stinga2010}). 

Consider the cylinder $\mathcal C_\Omega:=\Omega\times(0,+\infty)\subset\R_+^{N+1}$ and denote by $\partial_L\mathcal{C}_\Omega:=\partial\Omega\times[0,+\infty)$ its lateral boundary. Set also $\SD^*:=\SD\times[0,+\infty)$,  $\SN^*:=\SN\times[0,+\infty)$ and $\Gamma^*:=\Gamma\times[0,+\infty)$. Note that, by construction, $\SD^*\cap\SN^*=\emptyset$, $\SD^*\cup\SN^*=\partial_L\mathcal C_\Omega$ and $\SD^*\cap\overline{\SN^*}=\Gamma^*$.

Given $u\in H_{\SD}^s(\Omega)$, we define its $s$-harmonic extension $w(x,y)=E_s[u(x)]$ as the solution to
\begin{equation*}
	\left\{
	\begin{array}{ll}\label{extprobl}
		-\text{div}\left(y^{1-2s}\nabla w(x,y) \right)=0 & \text{ in } \mathcal{C}_{\Omega}, \smallskip\\
		 B(w(x,y))=0  & \mbox{ on } \partial_L\mathcal{C}_{\Omega}, \smallskip\\
		 w(x,0)=u(x)  &  \text{ on } \Omega\times\{y=0\} ,
	\end{array}
	\right.
\end{equation*}
where 
\[B(w)=w\chi_{\SD^*} +\frac{\partial w}{\partial\nu}\chi_{\SN^*},\] 
being $\nu$, by a slight abuse of notation, the outwards unit normal to $\partial_L\mathcal C_\Omega$. The extension function $w$ belongs to the space
$$
\mathcal{X}_{\Sigma_{\mathcal{D}}^*}^s(\mathcal{C}_{\Omega})=\overline{C_0^\infty\left((\Omega\cup\SN)\times[0,+\infty)\right)}^{\left\| \cdot\right\|_{\mathcal{X}_{\Sigma_{\mathcal{D}}^*}^s(\mathcal{C}_{\Omega})}},
$$
where we define
\begin{equation}\label{norma}
	\left\| \cdot \right\|_{\mathcal{X}_{\Sigma_{\mathcal{D}}^*}^s(\mathcal{C}_{\Omega})}^2=k_s\int_{\mathcal C_\Omega} y^{1-2s}|\nabla (\cdot)|^2dxdy,
\end{equation}
with $k_s:=2^{2s-1}\frac{\Gamma(s)}{\Gamma(1-s)}$. The space $\mathcal{X}_{\Sigma_{\mathcal{D}}^*}^s(\mathcal{C}_{\Omega})$ is a Hilbert space equipped with the scalar product
$$
\left\langle w,z\right\rangle_{\mathcal{X}_{\Sigma_{\mathcal{D}}^*}^s(\mathcal{C}_{\Omega})}=k_s \int_{\mathcal C_\Omega}y^{1-2s}\nabla w\cdot\nabla z dx dy,\quad \text{for }\ w,z\in \mathcal{X}_{\Sigma_{\mathcal{D}}^*}^s(\mathcal{C}_{\Omega}),
$$
which induces the norm $\left\| \cdot \right\|_{\mathcal{X}_{\Sigma_{\mathcal{D}}^*}^s(\mathcal{C}_{\Omega})}$. Moreover the following inclusions hold
\begin{equation} \label{inclusions}
	\mathcal{X}_0^s(\mathcal{C}_{\Omega}) \subsetneq \mathcal{X}_{\Sigma_{\mathcal{D}}^*}^s(\mathcal{C}_{\Omega}) \subsetneq \mathcal{X}^s(\mathcal{C}_{\Omega}),
\end{equation}
being  $\mathcal{X}_0^s(\mathcal{C}_{\Omega})$ the space of functions that belong to $\mathcal{X}^s(\mathcal{C}_{\Omega})\equiv H^1(\mathcal{C}_{\Omega},y^{1-2s}dxdy)$ and vanish on $\partial_L\mathcal{C}_{\Omega}$. The key point of the extension technique is the  localization formula  (cf. \cite{bracoldepsan2013a,cafsil2007an}),
$$
\frac{\partial w}{\partial \nu^s}:=-k_s\lim_{y\to 0^+}y^{1-2s}\frac{\partial w}{\partial y}=(-\Delta)^s u(x),
$$
thanks to which problem \eqref{problem} can be restated as
\begin{equation}\label{extorigprobl}        \tag{$P_{\lambda,q}^*$}
	\left\{
	\begin{array}{ll}
		-\text{div}(y^{1-2s}\nabla w )=0 & \text{ in } \mathcal{C}_{\Omega}, \smallskip\\
		 B(w)=0 & \text{ on } \partial_L\mathcal{C}_{\Omega}, \smallskip\\
		 \displaystyle\frac{\partial w}{\partial \nu^s}= \lambda w(x,0)^q + Q(x)w(x,0)^{2^*_s-1} &  \text{ on } \Omega\times\{y=0\}.
	\end{array}
	\right.
\end{equation}
We say that $w\in \mathcal{X}_{\Sigma_{\mathcal{D}}^*}^s(\mathcal{C}_{\Omega})$ is a weak solution to \eqref{extorigprobl} if, for all $z\in \mathcal{X}_{\Sigma_{\mathcal{D}}^*}^s(\mathcal{C}_{\Omega})$,
		\begin{equation*}
			k_s\int_{\mathcal C_\Omega} y^{1-2s} \nabla w\cdot \nabla z dxdy =\int_\Omega Q(x)w(x,0)^{2^*_s-1}z(x,0)dx + \lambda\int_\Omega w(x,0)^{q}z(x,0)dx.
		\end{equation*}
Given $w\in \mathcal{X}_{\Sigma_{\mathcal{D}}^*}^s(\mathcal{C}_{\Omega})$ a weak solution to \eqref{extorigprobl}, its trace
$u(x)=\text{Tr}[w](x):=w(x,0)$ belongs to $H_{\Sigma_{\mathcal{D}}}^s(\Omega)$ and it is a weak solution to
problem \eqref{problem} and vice versa, if $u\in H_{\Sigma_{\mathcal{D}}}^s(\Omega)$ is a solution to \eqref{problem}, then $w=E_s[u]\in
\mathcal{X}_{\Sigma_{\mathcal{D}}^*}^s(\mathcal{C}_{\Omega})$ satisfies \eqref{extorigprobl}. Thus, both formulations are equivalent and the {\it extension operator}
$$
E_s: H_{\Sigma_{\mathcal{D}}}^s(\Omega) \to \mathcal{X}_{\Sigma_{\mathcal{D}}^*}^s(\mathcal{C}_{\Omega}),
$$
allows us to switch between each other. Moreover, due to the definition of the constant $k_s$ in \eqref{norma}, the extension operator $E_s$ is an isometry (cf. \cite{bracoldepsan2013a, cafsil2007an}),
\begin{equation}\label{norma2}
	\|E_s[u] \|_{\mathcal{X}_{\Sigma_{\mathcal{D}}^*}^s(\mathcal{C}_{\Omega})}=
	\|u\|_{H_{\Sigma_{\mathcal{D}}}^s(\Omega)}, \quad \text{for all } u\in H_{\Sigma_{\mathcal{D}}}^s(\Omega).
\end{equation}
Finally, the energy functional corresponding to \eqref{extorigprobl} is defined by
\begin{equation*}
	J_\lambda(w):=\frac{\kappa_s}{2}\int_{\mathcal{C}_{\Omega}}y^{1-2s}|\nabla w|^2dxdy-\frac{1}{2^*_s}\int_{\Omega}Q(x)|w(x,0)|^{2^*_s}dx-\frac{\lambda}{q+1}\int_{\Omega}|w(x,0)|^{q+1}dx,
\end{equation*}
so critical points of $J_\lambda$ in
$\mathcal{X}_{\Sigma_{\mathcal{D}}^*}^s(\mathcal{C}_{\Omega})$
correspond to critical points of $I_\lambda$ in
$H_{\Sigma_{\mathcal{D}}}^s(\Omega)$. Moreover, minima of $J_\lambda$ are also minima of $I_\lambda$ (the proof follows as in \cite[Proposition 3.1]{Barrios2012}).

When one considers Dirichlet boundary conditions the following \textit{trace inequality} holds (cf. \cite[Theorem 4.4]{bracoldepsan2013a}): there exists $C=C(N,s,r,|\Omega|)>0$ such that, for all $w\in\mathcal{X}_0^s(\mathcal{C}_{\Omega})$,
\begin{equation}\label{sobext}
	C\left(\int_{\Omega}|w(x,0)|^r dx\right)^{\frac{2}{r}}\leq k_s\int_{\mathcal{C}_{\Omega}}y^{1-2s}|\nabla w(x,y)|^2dxdy,
\end{equation}
for $r\in[1,2_s^*]$ and $N>2s$. Because of \eqref{norma2}, inequality \eqref{sobext} is equivalent to the fractional Sobolev inequality: for all $u\in H_{0}^s(\Omega)$ and $r\in[1,2^*_s]$, $N>2s$, one has
\begin{equation}\label{sobolev}
	C\left(\int_{\Omega}|u|^rdx\right)^{\frac{2}{r}}\leq \int_{\Omega}\left|(-\Delta)^{\frac{s}2}u\right|^2dx.
\end{equation}
If $r=2_s^*$ the best constant in \eqref{sobolev} (and, by \eqref{norma2}, also in \eqref{sobext}), %namely the fractional Sobolev constant,
denoted by $S(N,s)$, is independent of $\Omega$ and its exact value is given by
\begin{equation}\label{soboconst}
	S(s,N)=2^{2s}\pi^s\frac{\Gamma\left(\frac{N+2s}{2}\right)}{\Gamma\left(\frac{N-2s}{2}\right)}\left(\frac{\Gamma(\frac{N}{2})}{\Gamma(N)}\right)^{\frac{2s}{N}}.
\end{equation}
Since it is not achieved in any bounded domain, one has
\begin{equation*}
	S(s,N)\left(\int_{\mathbb{R}^N}|w(x,0)|^{\frac{2N}{N-2s}}dx\right)^{\frac{N-2s}{N}}\leq k_s\int_{\mathbb{R}_{+}^{N+1}}y^{1-2s}|\nabla w(x,y)|^2dxdy,
\end{equation*}
for all $w\in \mathcal{X}^s(\mathbb{R}_{+}^{N+1})$ where $\mathcal{X}^s(\mathbb{R}_{+}^{N+1})=\overline{\mathcal{C}^{\infty}(\mathbb{R}^N\times[0,\infty))}^{\|\cdot\|_{\mathcal{X}^s(\mathbb{R}_{+}^{N+1})}}$. In the whole space, the equality is achieved at the family $w_{\varepsilon}= E_s[u_{\varepsilon}]$,
\[%\begin{equation}\label{eq:sob_extremal}
	u_{\varepsilon}(x)=\frac{\varepsilon^{\frac{N-2s}{2}}}{(\varepsilon^2+|x|^2)^{\frac{N-2s}{2}}},
\]%\end{equation}
with arbitrary $\varepsilon>0$, (cf. \cite{bracoldepsan2013a}).

When mixed boundary conditions are considered the situation is quite similar since the Dirichlet condition is imposed on a set $\Sigma_{\mathcal{D}} \subset \partial \Omega$ with $|\Sigma_{\mathcal{D}}|=\mu$ and $0<\mu<|\partial\Omega|$.

\begin{definition}
	{\rm
		The Sobolev constant relative to the Dirichlet boundary $\Sigma_{\mathcal{D}}$ is defined by
		\begin{equation}\label{defi_sob}
			S(\Sigma_{\mathcal{D}})=\inf_{\substack{u\in
					H_{\Sigma_{\mathcal{D}}}^s(\Omega)\\ u\not\equiv
					0}}\frac{\|u\|_{H_{\Sigma_{\mathcal{D}}}^s(\Omega)}^2}{\|u\|_{L^{2_s^*}(\Omega)}^2}=\inf_{\substack{w\in \mathcal{X}_{\Sigma_{\mathcal{D}}^*}^s(\mathcal{C}_{\Omega})\\
					w\not\equiv
					0}}\frac{\|w\|_{\mathcal{X}_{\Sigma_{\mathcal{D}}}^s(\mathcal{C}_{\Omega})}^2}{\|w(\cdot,0)\|_{L^{2_s^*}(\Omega)}^2}.
		\end{equation}
	}
\end{definition}

Since $0<\mu<|\partial\Omega|$, by the inclusions \eqref{inclusions}, one has
\begin{equation}\label{const}
	0<S(\Sigma_{\mathcal{D}})<S(s,N).
\end{equation}
Here, the main difference with Dirichet boundary problems comes into play. Actually, because of \cite[Proposition 3.6]{colort2019the} it turns out that
\begin{equation}\label{relatsobconst}
S(\Sigma_{\mathcal{D}})\leq 2^{-\frac{2s}{N}}S(s,N).
\end{equation}
Thus we consider separately the hypotheses
\begin{equation}\label{c<}\tag{$C_<$}
S(\Sigma_{\mathcal{D}})<2^{-\frac{2s}{N}}S(s,N),
\end{equation}
and
\begin{equation}\label{c=}\tag{$C_=$}
S(\Sigma_{\mathcal{D}})=2^{-\frac{2s}{N}}S(s,N).
\end{equation}
Because of \cite[Theorem 2.9]{colort2019the}, under \eqref{c<} the constant $S(\Sigma_{\mathcal{D}})$ is attained while, in the lines of the Dirichlet case, under \eqref{c=} the constant $S(\Sigma_{\mathcal{D}})$ is not attained. By Hölder inequality, $S(\Sigma_{\mathcal{D}})\leq|\Omega|^{\frac{2s}{N}}\lambda_1^s(\mu)$, with $\lambda_1(\mu)$ the first eigenvalue of $(-\Delta)$ endowed with mixed boundary data on $\Sigma_{\mathcal{D}}=\Sigma_{\mathcal{D}}(\mu)$ and $\Sigma_{\mathcal{N}}= \Sigma_{\mathcal{N}}(\mu)$. Since $\lambda_1(\mu)\to 0$ as $\mu\to 0^+$, (cf. \cite[Lemma 4.3]{colper2003semilinear}), then $S(\Sigma_{\mathcal{D}})\to 0$ as $\mu\to 0^+$. Thus, taking the Dirichlet part small enough, the constant $S(\Sigma_{\mathcal{D}})$ is attained independently of the geometry of $\Omega$. This is in contrast to the Dirichlet case where, by a Pohozaev type identity, if $\Omega$ is a star-shaped domain the constant $S(s,N)$ is not attained. Nevertheless, geometric conditions on the boundary parts $\Sigma_{\mathcal{D}}$ and $\Sigma_{\mathcal{N}}$ can be also set in order to obtain a non-attainability result as the one for the Dirichlet case (cf. \cite[Corollary 5.2]{colort2019the}).

By \eqref{norma2} and \eqref{const}, the following trace inequality for the mixed boundary data setting holds.
\begin{lemma}\cite[Lemma 2.4]{colort2019the}\label{lem:traceineq}
	For all $w\in \mathcal{X}_{\Sigma_{\mathcal{D}}^*}^s(\mathcal{C}_{\Omega})$, one has
	\begin{equation*}
		S(\Sigma_{\mathcal{D}})\left(\int_\Omega|w(x,0)|^{2^*_s}  dx\right)^{\frac{2}{2^*_s}}\leq k_s\int_{\mathcal{C}_{\Omega}} y^{1-2s} |\nabla w(x,y)|^2 dxdy.
	\end{equation*}
	In an equivalent way, for all $u\in H_{\SD}^s(\Omega)$ it holds
	\begin{equation*}%\label{mix_sobolev}
		S(\Sigma_{\mathcal{D}})\left(\int_{\Omega}|u|^{2^*_s}dx\right)^{\frac{2}{2^*_s}}\leq \int_{\Omega}|(-\Delta)^{\frac{s}2}u|^2dx.
	\end{equation*}
\end{lemma}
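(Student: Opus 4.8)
The statement is nothing more than the variational definition \eqref{defi_sob} of $S(\Sigma_{\mathcal{D}})$ rewritten in terms of the gradient integral on the cylinder, so the plan is simply to make this unwinding precise and to check that both formulations really do say the same thing.

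First I would dispose of the trivial case $w\equiv 0$ (equivalently $u\equiv 0$), for which both sides of the two asserted inequalities vanish. For $w\in\X$ with $w\not\equiv 0$ I would start from the infimum characterization in \eqref{defi_sob}, which gives
\[
S(\Sigma_{\mathcal{D}})\ \le\ \frac{\|w\|_{\X}^2}{\|w(\cdot,0)\|_{L^{2^*_s}(\Omega)}^2},
\]
and then multiply through by $\|w(\cdot,0)\|_{L^{2^*_s}(\Omega)}^2$ and replace $\|w\|_{\X}^2$ by its explicit expression $k_s\int_{\mathcal{C}_{\Omega}}y^{1-2s}|\nabla w|^2\,dx\,dy$ from \eqref{norma}. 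This yields exactly the first displayed inequality. Implicitly this uses only that $S(\Sigma_{\mathcal{D}})$ is a well-defined finite positive real number, which is precisely what \eqref{const} records: finiteness comes from the inclusion $\mathcal{X}_0^s(\mathcal{C}_{\Omega})\subset\X$ (test the quotient with a function vanishing near $\partial_L\mathcal{C}_{\Omega}$), and positivity from $\X\subset\mathcal{X}^s(\mathcal{C}_{\Omega})$ together with the non-degeneracy of the whole-space trace constant $S(s,N)$.

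For the second, equivalent, formulation I would pass through the extension operator. Given $u\in H_{\SD}^s(\Omega)$, set $w=E_s[u]$, so that $w(\cdot,0)=u$ by construction while $\|w\|_{\X}^2=\|u\|_{H_{\SD}^s(\Omega)}^2=\int_\Omega|(-\Delta)^{s/2}u|^2\,dx$ by the isometry \eqref{norma2}; substituting into the cylinder inequality just established gives the trace inequality in $H_{\SD}^s(\Omega)$. Conversely, since $E_s$ maps $H_{\SD}^s(\Omega)$ onto $\X$ and every $w\in\X$ is the $s$-harmonic extension of its own trace $w(\cdot,0)$, the two statements carry identical information — this is already built into the second equality in \eqref{defi_sob}. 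There is no genuine obstacle here: the entire content is the packaging of the definition of $S(\Sigma_{\mathcal{D}})$ with the isometry property of $E_s$, and the only external input, namely $0<S(\Sigma_{\mathcal{D}})<S(s,N)$, has already been granted in \eqref{const}.
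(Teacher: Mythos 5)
Your argument is correct and matches the paper's (the lemma is quoted from \cite[Lemma 2.4]{colort2019the} and justified there exactly as you do: unwind the definition \eqref{defi_sob}, invoke the isometry \eqref{norma2} to pass between the cylinder and the trace formulations, with \eqref{const} guaranteeing $S(\Sigma_{\mathcal{D}})$ is a finite positive number). The only cosmetic point is the case $w\not\equiv 0$ with $w(\cdot,0)\equiv 0$, where the quotient in \eqref{defi_sob} is not defined but the asserted inequality holds trivially since its left-hand side vanishes.
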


\subsection{Notation}
Throughout this paper, the symbols $C,C_i$, $i=1,2,\ldots,$ will denote positive constants whose exact value may vary from line to line. Given $x\in\R^N$ and $r>0$, the symbols $B_r(x)$ and $\overline{B}_r(x)$ will stand for the open and the closed ball of $\R^N$ centered at $x$ and with radius $r$, respectively, while $\Omega_r(x):=\Omega\cap B_r(x)$. 

%%%%%%%%%%%%%%%%%%%%%%%%%%%%%%%
\section{Existence of solutions}\label{sec:existence}
%\red{[Say that we follow the approach in \cite{lialiuzhatan2016existence}].} 
We start by proving that we can recover compactness below a critical threshold. As usual, this is done by obtaining the range for which the Palais-Smale condition holds. 
\begin{definition}
Let $V$ be a Banach space. We say that $\{u_n\} \subset V$ is a $(PS)_c$ sequence for a functional $\mathfrak{F}:V\mapsto\mathbb{R}$ if
\begin{equation*}
\mathfrak{F}(u_n) \to c \quad \hbox{ and }\quad  \mathfrak{F}'(u_n) \to 0 \mbox{ in } V^* \quad \hbox{as } n\to + \infty,
\end{equation*}
where $V^*$ is the dual space of $V$. Moreover, we say that $\mathfrak{F}$ satisfies the $(PS)_c$ condition if every $(PS)_c$ sequence for $\mathfrak{F}$ has a strongly convergent subsequence.
\end{definition}

\begin{proposition}\label{PScondition}
Let 
\begin{equation}\label{cupperbound}
c<c^\star:=\displaystyle\frac{s}{N}\frac{S(\Sigma_D)^\frac{N}{2s}}{Q_M^\frac{N-2s}{2s}}.
\end{equation} 
Then $J_\lambda$ satisfies the $(PS)_c$ condition in the following cases:
\begin{itemize}
	\item[$(i)$] $q=1$ and $\lambda\in(0,\lambda_1^s)$;
	\item[$(ii)$]  $q\in(1,2^*_s-1)$ and $\lambda>0$.
\end{itemize}	
\end{proposition}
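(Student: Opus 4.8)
\noindent\textit{Proof proposal.}
The plan is to run the usual concentration--compactness argument for the extended functional $J_\lambda$ on $\X$. Let $\{w_n\}\subset\X$ be a $(PS)_c$ sequence, so $J_\lambda(w_n)=c+o(1)$ and $\langle J_\lambda'(w_n),w_n\rangle=o(1)\|w_n\|$. The first step is to prove boundedness of $\{w_n\}$. When $q\in(1,2^*_s-1)$ we use that $2<q+1<2^*_s$: the combination $J_\lambda(w_n)-\frac1{q+1}\langle J_\lambda'(w_n),w_n\rangle$ annihilates the $L^{q+1}$ term and leaves $\left(\frac12-\frac1{q+1}\right)\|w_n\|^2+\left(\frac1{q+1}-\frac1{2^*_s}\right)\int_\Omega Q(x)|w_n(x,0)|^{2^*_s}\,dx$, whose two coefficients are strictly positive, while $Q>0$; hence $\left(\frac12-\frac1{q+1}\right)\|w_n\|^2\le c+o(1)+o(1)\|w_n\|$ and $\{w_n\}$ is bounded. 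When $q=1$, the same combination with $\frac12$ in place of $\frac1{q+1}$ cancels both the quadratic and the $L^2$ terms and gives $\left(\frac12-\frac1{2^*_s}\right)\int_\Omega Q(x)|w_n(x,0)|^{2^*_s}\,dx\le c+o(1)+o(1)\|w_n\|$; inserting this back into $J_\lambda(w_n)=c+o(1)$ and using the Poincaré-type inequality $\lambda_1^s\|w_n(\cdot,0)\|_{L^2(\Omega)}^2\le\|w_n\|^2$ together with $\lambda<\lambda_1^s$ yields $\frac12\left(1-\frac{\lambda}{\lambda_1^s}\right)\|w_n\|^2\le C+C\|w_n\|$, so $\{w_n\}$ is again bounded.

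Passing to a subsequence, $w_n\rightharpoonup w$ in $\X$, $w_n(\cdot,0)\to w(\cdot,0)$ strongly in $L^r(\Omega)$ for every $r\in[1,2^*_s)$ by the compact trace embedding, and $w_n(\cdot,0)\to w(\cdot,0)$ a.e. in $\Omega$. Consequently $|w_n(\cdot,0)|^{2^*_s-2}w_n(\cdot,0)\rightharpoonup|w(\cdot,0)|^{2^*_s-2}w(\cdot,0)$ in $L^{(2^*_s)'}(\Omega)$ (boundedness in $L^{2^*_s}$ plus a.e. convergence) and $|w_n(\cdot,0)|^{q-1}w_n(\cdot,0)\to|w(\cdot,0)|^{q-1}w(\cdot,0)$ in $L^{(q+1)/q}(\Omega)$, so passing to the limit in $\langle J_\lambda'(w_n),\phi\rangle=o(1)$ for $\phi\in\X$ shows that $w$ is a weak solution of \eqref{extorigprobl}; in particular $\langle J_\lambda'(w),w\rangle=0$. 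Using this constraint to eliminate $\|w\|^2$ in $J_\lambda(w)$ exactly as in the boundedness step gives, for $q>1$, $J_\lambda(w)=\left(\frac12-\frac1{q+1}\right)\|w\|^2+\left(\frac1{q+1}-\frac1{2^*_s}\right)\int_\Omega Q(x)|w(x,0)|^{2^*_s}\,dx\ge0$, and for $q=1$, $J_\lambda(w)=\frac sN\int_\Omega Q(x)|w(x,0)|^{2^*_s}\,dx\ge0$; so in all cases $J_\lambda(w)\ge0$.

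Now set $v_n:=w_n-w\rightharpoonup0$. By the Brezis--Lieb lemma (the weight $Q$ being bounded and continuous), $\|w_n\|^2=\|v_n\|^2+\|w\|^2+o(1)$ and $\int_\Omega Q(x)|w_n(x,0)|^{2^*_s}\,dx=\int_\Omega Q(x)|v_n(x,0)|^{2^*_s}\,dx+\int_\Omega Q(x)|w(x,0)|^{2^*_s}\,dx+o(1)$, while $\int_\Omega|w_n(x,0)|^{q+1}\,dx\to\int_\Omega|w(x,0)|^{q+1}\,dx$ by strong subcritical convergence. Subtracting $\langle J_\lambda'(w),w\rangle=0$ from $\langle J_\lambda'(w_n),w_n\rangle=o(1)$ yields $\|v_n\|^2-\int_\Omega Q(x)|v_n(x,0)|^{2^*_s}\,dx\to0$; denote by $\ell\ge0$ the common limit of $\|v_n\|^2$ and $\int_\Omega Q(x)|v_n(x,0)|^{2^*_s}\,dx$. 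If $\ell>0$, Lemma~\ref{lem:traceineq} together with $Q\le Q_M$ gives $\ell\le Q_M\,S(\SD)^{-2^*_s/2}\ell^{2^*_s/2}$, whence $\ell\ge S(\SD)^{N/2s}Q_M^{-(N-2s)/2s}$. On the other hand, from the Brezis--Lieb splitting,
\[
c=\lim_{n\to\infty}J_\lambda(w_n)=J_\lambda(w)+\left(\frac12-\frac1{2^*_s}\right)\ell=J_\lambda(w)+\frac sN\,\ell\ \ge\ \frac sN\,\ell\ \ge\ \frac sN\,\frac{S(\SD)^{N/2s}}{Q_M^{(N-2s)/2s}}=c^\star,
\]
contradicting the hypothesis $c<c^\star$ in \eqref{cupperbound}. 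Hence $\ell=0$, i.e. $\|v_n\|\to0$, so $w_n\to w$ strongly in $\X$, which is the $(PS)_c$ condition.

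The genuinely delicate points are the boundedness in the linear case $q=1$ --- which is precisely why one must restrict to $\lambda<\lambda_1^s$ there, the quadratic term being otherwise uncontrollable --- and the verification that the weak limit satisfies $J_\lambda(w)\ge0$, since the energy identity $c=J_\lambda(w)+\frac sN\ell$ only produces the contradiction once $J_\lambda(w)$ is known to be nonnegative. The remaining ingredients (the compact trace embedding into subcritical Lebesgue spaces, Brezis--Lieb with the continuous bounded weight $Q$, and the mixed trace inequality of Lemma~\ref{lem:traceineq}) are routine.
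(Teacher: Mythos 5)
Your proposal is correct and follows essentially the same route as the paper: boundedness via a linear combination of $J_\lambda$ and $J_\lambda'$ (the paper uses $\tfrac1{2^*_s}$ for $q=1$ and $\tfrac1m$, $m\in(2,q+1)$, for $q>1$, which is only cosmetically different from your choices), then weak convergence, Brezis--Lieb splitting, the dichotomy for $\ell$ via the mixed trace inequality of Lemma~\ref{lem:traceineq}, and the contradiction with $c<c^\star$ using $J_\lambda(w)\ge 0$ from the Nehari-type identity $\langle J_\lambda'(w),w\rangle=0$. No gaps.
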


\begin{proof}
Let $c\in\R$ as in \eqref{cupperbound} and let $\{w_n\}\subset\X$ be a $(PS)_c$ sequence for $J_\lambda$, i.e.
$$
J_\lambda(w_n)=c+o(1), \quad J'_\lambda(w_n)=o(1) \quad \text{as } n\to+\infty.
$$
Let us show that $\{w_n\}$ is bounded in $\X$. Assuming $(i)$, for $n\in\N$ large enough we get
\begin{align*}
|c|+o(1)+\left\| w_n\right\|_{\X} & \geq J_\lambda(w_n) -\frac{1}{2^*_s}J'_\lambda(w_n)(w_n)\\
&= \left( \frac{1}{2}-\frac{1}{2^*_s}\right)\left\|w_n \right\|_{\X}^2-\lambda\left( \frac{1}{2}-\frac{1}{2^*_s}\right)\| w_n(x,0)\|_{L^2(\Omega)}^2\\
& \geq \frac{s}{N}\left( 1-\frac{\lambda}{\lambda_1^s}\right)\left\|w_n \right\|_{\X}^2, 
\end{align*} 
which proves the claim. If instead $(ii)$ holds, choosing $m\in(2,q+1)$ and again $n\in\N$ large enough, we obtain
\begin{align*}
	|c|+o(1)+\left\| w_n\right\|_{\X} & \geq J_\lambda(w_n) -\frac{1}{m}J'_\lambda(w_n)(w_n)\\
	&= \left( \frac{1}{2}-\frac{1}{m}\right)\left\|w_n \right\|_{\X}^2 +\left( \frac{1}{m}-\frac{1}{2^*_s}\right)\int_\Omega Q(x) |w_n(x,0)|^{2^*_s}dx \\
	&\;\;\;  +\lambda\left( \frac{1}{m}-\frac{1}{q+1}\right)\int_\Omega |w_n(x,0)|^{q+1}dx\\
	& \geq \left( \frac{1}{2}-\frac{1}{m}\right)\left\|w_n \right\|_{\X}^2, 
\end{align*} 
and the claim follows. From the boundedness of $\{w_n\}$ in $\X$ we deduce that, up to subsequences, $w_n\rightharpoonup w_0\in\X$ and, since the trace operator is bounded and linear, then $\Tr(w_n)\rightharpoonup \Tr(w_0)\in H_{\SD}^s(\Omega)$. By the compactness of the embedding $H_{\SD}^s(\Omega)\hookrightarrow L^p(\Omega)$, $p\in(1,2^*_s)$, we deduce then
\begin{equation}\label{convergences}
\begin{split}
	\Tr(w_n)\to\Tr(w_0) & \quad\text{ in } L^p(\Omega),\\
	\Tr(w_n)\to\Tr(w_0) & \quad\text{ a.e. in } \Omega.
\end{split}
\end{equation}
On the other hand, by Brezis-Lieb's Lemma we obtain
\begin{equation}\label{brezislieb}
	\begin{split}
		\left\| w_n-w_0\right\|_{\X}^2 =&\left\| w_n\right\|_{\X}^2 - \left\| w_0\right\|_{\X}^2 +o(1),\\
		\int_{\Omega}  Q(x)\left|w_n(x,0)-w_0(x,0)\right|^{2^*_s}dx  =& \int_{\Omega}  Q(x)\left|w_n(x,0)\right|^{2^*_s}dx \\
													    &- \int_{\Omega}  Q(x)\left|w_0(x,0)\right|^{2^*_s}dx +o(1),    
	\end{split}
\end{equation}
so we get
\begin{equation*}
\begin{split}	
	\left\|w_n-w_0 \right\|_{\X}^2 +\left\|w_0 \right\|_{\X}^2 &- \int_\Omega Q(x) |w_n(x,0)-w_0(x,0)|^{2^*_s}dx\\ 
	&- \int_\Omega Q(x)|w_0(x,0)|^{2^*_s}dx -\lambda\int_\Omega |w_0(x,0)|^{q+1}dx =o(1), 
\end{split}
\end{equation*}
and
\begin{equation}\label{limJprimelambda}
\lim_{n\to\infty}J'_\lambda(w_n)(w_0)=\left\| w_0\right\|_{\X}^2 - \int_\Omega Q(x)|w_0(x,0)|^{2^*_s}dx -\lambda\int_\Omega |w_0(x,0)|^{q+1}dx = 0.
\end{equation}
Collecting \eqref{convergences}, \eqref{brezislieb} and \eqref{limJprimelambda} we arrive at
\begin{equation}\label{stima1}
\frac{1}{2}\left\| w_n-w_0\right\|_{\X}^2 -\frac{1}{2^*_s} \int_\Omega Q(x)|w_n(x,0)-w_0(x,0)|^{2^*_s} dx =c-J_\lambda(w_0) +o(1) 
\end{equation}
and
\begin{equation*}
\left\| w_n-w_0\right\|_{\X}^2 -  \int_\Omega Q(x)|w_n(x,0)-w_0(x,0)|^{2^*_s} dx=o(1).
\end{equation*}
Let 
\begin{equation}\label{limitl}
l=\lim_{n\to\infty}\left\| w_n-w_0\right\|_{\X}^2=\lim_{n\to\infty}\int_\Omega Q(x)|w_n(x,0)-w_0(x,0)|^{2^*_s} dx.
\end{equation}
Taking also account of Sobolev inequality in Lemma \ref{lem:traceineq} we get
\begin{align*}
\left\| w_n-w_0\right\|_{\X}^2 &\geq S(\Sigma_D)
\left( \int_\Omega |w_n(x,0)-w_0(x,0)|^{2^*_s} dx\right)^\frac{2}{2^*_s} \\
&\geq  S(\Sigma_D)\left(\int_\Omega \frac{Q(x)}{Q_M}|w_n(x,0)-w_0(x,0)|^{2^*_s} dx\right)^\frac{2}{2^*_s},
\end{align*}
and passing to the limit for $n\to\infty$, we obtain
\[
l\geq S(\Sigma_D)\left(\frac{l}{Q_M}\right)^\frac{2}{2^*_s},
\]
which implies that either $l=0$ or $l\geq S(\Sigma_D)^\frac{N}{2s} Q_M^{-\frac{N-2s}{2s}}$. The second alternative can not occur. Otherwise, by \eqref{stima1} and \eqref{limitl} we would obtain
$$
c=\frac{l}{2}-\frac{l}{2^*_s} + J_\lambda(w_0)=\frac{s}{N}l+J_\lambda(w_0) \geq \frac{s}{N}\frac{S(\Sigma_D)^\frac{N}{2s}}{Q_M^\frac{N-2s}{2s}} +J_\lambda(w_0),
$$
which, together with \eqref{cupperbound}, leads to
$$
J_\lambda(w_0)\leq c - \frac{s}{N}\frac{S(\Sigma_D)^\frac{N}{2s}}{Q_M^\frac{N-2s}{2s}}<0.
$$
On the other hand, by \eqref{limJprimelambda}, 
$$
J_\lambda(w_0)=\left( \frac{1}{2}-\frac{1}{q+1}\right)\left\|w_0 \right\|_{\X}^2 + \left( \frac{1}{q+1}-\frac{1}{2^*_s}\right)\int_\Omega Q(x)|w_0|^{2^*_s}dx \geq 0,  
$$
a contradiction. As a result, it must be $l=0$, and hence $w_n\to w_0$ in $\X$ as $n\to\infty$. 
\end{proof}	

Now let $\phi_0\in C^\infty(\R^+)$ be a non-increasing cut-off function, $\phi_0(t)=1$ for $t\in[0,1/2]$, $\phi_0(t)=0$ for $t\geq 1$, and $|\phi'_0(t)|\leq C$ for all $t\geq 0$. Choose $\varrho>0$ small enough such that $B^+_\varrho((a^i,0)):=B_\varrho((a^i,0))\cap\R^{N+1}_+\subset \mathcal C_\Omega$ for every $i=0,1,\ldots,k$, and set 
\[z_{\varrho,\varepsilon}^i :=\phi_{\varrho,i}w_{\varepsilon,i}, \qquad\text{where}\qquad  w_{\varepsilon,i}:=E_s[u_{\varepsilon,i}],\]
and
\begin{align*}
\phi_{\varrho,i}(x,y)&:=\phi_0\left( \frac{|x-a^i,y|}{\varrho}\right)=\phi_0\left( \frac{(|x-a^i|^2+y^2)^\frac{1}{2}}{\varrho}\right), \quad (x,y)\in \mathcal C_\Omega,\\
u_{\varepsilon,i}(x)&:=	\frac{\varepsilon^{\frac{N-2s}{2}}}{(\varepsilon^2+|x-a^i|^2)^{\frac{N-2s}{2}}}, \quad x\in\R^N,\\
\end{align*}
for $i=1,\ldots,k$. The following relations hold (cf. \cite[Lemma 12]{ort2023concave}):
\begin{equation}\label{estimtruncextremals}
	\begin{split}
		\left\| z^i_{\varrho,\varepsilon}\right\|_{\X}^2 &= \frac{1}{2}\left\| w_{\varepsilon,i}\right\|_{\X}^2 + O\left( \left( \frac{\varepsilon}{\varrho}\right)^{N-2s} \right),\\
		\left\| \phi_{\varrho,i}(\cdot,0)u_{\varepsilon,i}\right\|_{2^*_s}^{2^*_s} & = \frac{1}{2}\left\| u_{\varepsilon,i}\right\|_{L^{2^*_s}(\R^N)}^{2^*_s} + O\left( \left( \frac{\varepsilon}{\varrho}\right)^{N} \right).  
	\end{split}
\end{equation}

Next, let us estimate the $L^p$-norm on $\Omega$ of the truncated functions $z^0_{\varrho,\varepsilon}(\cdot,0)$.%=\phi_{\varrho,0}(x,0)u_{\varepsilon,0}(x)$, $x\in\Omega$. 
\begin{lemma}\label{Lpnormextremals}
The following facts hold:
\begin{itemize}
\item[$(i)$] if $\displaystyle p=\frac{N}{N-2s}$, then $C_1\varepsilon^\frac{N}{2}|\log\varepsilon| \leq\left\|z^0_{\varrho,\varepsilon}(\cdot,0)\right\|_{L^p(\Omega)}^p \leq C_2\varepsilon^\frac{N}{2}|\log\varepsilon|$;
\item[$(ii)$] if $\displaystyle p<\frac{N}{N-2s}$, then $C_3\varepsilon^\frac{(N-2s)p}{2}\leq\left\|z^0_{\varrho,\varepsilon}(\cdot,0)\right\|_{L^p(\Omega)}^p \leq C_4 \varepsilon^\frac{(N-2s)p}{2}$;
\item[$(iii)$] if $\displaystyle p>\frac{N}{N-2s}$, then $C_5\varepsilon^{N-\frac{(N-2s)p}{2}}\leq\left\|z^0_{\varrho,\varepsilon}(\cdot,0)\right\|_{L^p(\Omega)}^p\leq C_6 \varepsilon^{N-\frac{(N-2s)p}{2}}$,
\end{itemize}
where $C_i$, $i=1,\ldots,6$, are positive constants independent of $\varepsilon$.
\end{lemma}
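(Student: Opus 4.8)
The plan is to compute the integral $\int_\Omega |z^0_{\varrho,\varepsilon}(x,0)|^p\,dx$ directly by exploiting the explicit form of the Aubin–Talenti profile $u_{\varepsilon,0}$ and the fact that $\phi_{\varrho,0}(\cdot,0)$ is identically $1$ on $B_{\varrho/2}(a^0)$ and supported in $B_\varrho(a^0)$. Since $a^0\in\partial\Omega$ and $\partial\Omega$ is smooth, after flattening the boundary near $a^0$ one has $|\Omega_\varrho(a^0)|$ comparable to half a ball, so up to constants depending only on $N,s,p,\Omega$ the quantity $\left\|z^0_{\varrho,\varepsilon}(\cdot,0)\right\|_{L^p(\Omega)}^p$ is squeezed between $c\int_{\Omega_{\varrho/2}(a^0)} u_{\varepsilon,0}^p\,dx$ and $C\int_{\Omega_{\varrho}(a^0)} u_{\varepsilon,0}^p\,dx$; both of these are in turn comparable to $\int_{B_\varrho(a^0)} u_{\varepsilon,0}^p\,dx$ because the density of $\Omega$ near a smooth boundary point is bounded below and above. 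Thus it suffices to estimate $\int_{B_\varrho(0)} \big(\varepsilon/(\varepsilon^2+|x|^2)\big)^{\frac{(N-2s)p}{2}}\varepsilon^{?}\,dx$ from both sides, i.e. a purely radial computation.

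First I would substitute $x=\varepsilon y$, which turns the integral into
\[
\varepsilon^{\,N-\frac{(N-2s)p}{2}}\int_{B_{\varrho/\varepsilon}(0)}\frac{dy}{(1+|y|^2)^{\frac{(N-2s)p}{2}}}.
\]
Passing to polar coordinates, the convergence or divergence of $\int_0^{\varrho/\varepsilon} r^{N-1}(1+r^2)^{-\frac{(N-2s)p}{2}}\,dr$ as $\varepsilon\to 0^+$ is governed by comparing the exponent $N-1$ with $(N-2s)p-1$, equivalently $N$ with $(N-2s)p$, equivalently $p$ with $\frac{N}{N-2s}$. In case $(ii)$, $p<\frac{N}{N-2s}$, the integral over $(0,\infty)$ converges to a finite positive constant, and the remainder beyond $\varrho/\varepsilon$ is negligible, giving the leading order $\varepsilon^{\frac{(N-2s)p}{2}}$ (note $N-\frac{(N-2s)p}{2}-\big(N-\frac{(N-2s)p}{2}\big)$ — here one must be careful: actually in this regime one factors differently, writing $u_{\varepsilon,0}^p\sim \varepsilon^{\frac{(N-2s)p}{2}}|x|^{-(N-2s)p}$ away from $0$, and the integral $\int_{B_\varrho}|x|^{-(N-2s)p}dx$ converges since $(N-2s)p<N$, while the contribution of $B_\varepsilon(a^0)$ is $O(\varepsilon^N)=o(\varepsilon^{\frac{(N-2s)p}{2}})$). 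In case $(iii)$, $p>\frac{N}{N-2s}$, the rescaled integral $\int_{B_{\varrho/\varepsilon}} (1+|y|^2)^{-\frac{(N-2s)p}{2}}dy$ converges to a finite constant as $\varepsilon\to 0$, so the full expression has size exactly $\varepsilon^{\,N-\frac{(N-2s)p}{2}}$. In the borderline case $(i)$, $p=\frac{N}{N-2s}$, the radial integral $\int_1^{\varrho/\varepsilon} r^{-1}dr=\log(\varrho/\varepsilon)\sim|\log\varepsilon|$ produces the logarithmic factor, and combined with the prefactor $\varepsilon^{N-\frac{(N-2s)p}{2}}=\varepsilon^{N/2}$ one gets the claimed $\varepsilon^{N/2}|\log\varepsilon|$; the lower bound follows by restricting to $B_{\varrho/2}(a^0)$ where $\phi_{\varrho,0}\equiv 1$ and integrating $r^{-1}$ over $(\varepsilon,\varrho/2\varepsilon)$ after rescaling.

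For each of the three cases the upper bound uses $0\le\phi_{\varrho,0}\le1$ and $\operatorname{supp}\phi_{\varrho,0}(\cdot,0)\subset B_\varrho(a^0)$, while the lower bound uses $\phi_{\varrho,0}(\cdot,0)\equiv1$ on $\Omega_{\varrho/2}(a^0)$ and the lower density bound $|\Omega_r(a^0)|\ge c r^N$ valid for $r$ small since $\partial\Omega$ is $C^1$ at $a^0\in\Sigma_{\mathcal N}$; this density bound also guarantees the lower-bound integral picks up a full solid-angle cone's worth of mass at every scale, which is what makes the lower bounds match the upper bounds up to constants. The main obstacle — really the only nontrivial bookkeeping — is keeping the two regimes of the integrand separated (the ``inner'' region $|x-a^0|\lesssim\varepsilon$ where $u_{\varepsilon,0}\sim\varepsilon^{-\frac{N-2s}{2}}$ and the ``outer'' region $\varepsilon\lesssim|x-a^0|\lesssim\varrho$ where $u_{\varepsilon,0}\sim\varepsilon^{\frac{N-2s}{2}}|x-a^0|^{-(N-2s)}$) and checking in each of the cases $(i)$–$(iii)$ which region dominates; this is entirely analogous to the classical computations for the Brezis–Nirenberg problem and for the Dirichlet fractional case, and presents no conceptual difficulty.
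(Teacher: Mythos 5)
Your argument is correct and follows essentially the same route as the paper: localize using the cutoff and the smoothness of the boundary at $a^0\in\Sigma_{\mathcal N}$ (half-ball comparison), then rescale and pass to polar coordinates, comparing $N$ with $(N-2s)p$ in the three regimes. The only blemishes are harmless: in case $(ii)$ the rescaled integral over $(0,\infty)$ actually diverges (your subsequent factorization handles this correctly), and the inner-ball contribution there is $O\bigl(\varepsilon^{N-\frac{(N-2s)p}{2}}\bigr)$ rather than $O(\varepsilon^{N})$, still negligible since $(N-2s)p<N$.
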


\begin{proof}
Let $A_\varrho(a^0)$ be the half ball centered at $a^0$ and of radius $\varrho$ such that the outer unit normal vector $\nu$ to $\SN$ at $a^0$ is not contained in $A_\varrho(a^0)$. Since $a^0$ is a regular point, there exists a diffeomorphism $D_\varrho:\Omega_\varrho(a^0)\to A_\varrho(a^0)$ such that $D_\varrho(\partial\Omega_\varrho\cap\Omega)=\partial A_\varrho(a^0)\cap\partial B_\varrho(a^0)$  and $D_\varrho$ transforms $\partial\Omega_\varrho\cap\SN$ into the flat part of $\partial A_\varrho(a^0)$. Taking also this fact into account, we obtain
\begin{equation}\label{LPnormz0rhoepsilon}
	\begin{split}
%	\int_\Omega z^0_{\varrho,\varepsilon}(x,0)^p dx & = \int_{B_{\varrho/2}(a^0)}u_{\varepsilon,0}(x)^p dx= O(1)+ \varepsilon^\frac{(N-2s)p}{2}\int_{B_{\varrho/2}(0)}\frac{1}{(\varepsilon^2+|x|^2)^\frac{(N-2s)p}{2}}dx\\
%	&=O(1)+ \varepsilon^\frac{(N-2s)p}{2}\omega_N \int_0^\frac{\varrho}{2}\frac{r^{N-1}}{(\varepsilon^2+r^2)^\frac{(N-2s)p}{2}}dr\\ &=O(1)+\varepsilon^{N-\frac{(N-2s)p}{2}}\omega_N \int_0^\frac{\varrho}{2\varepsilon} \frac{\sigma^{N-1}}{(1+\sigma^2)^\frac{(N-2s)p}{2}}d\sigma,
\int_\Omega z^0_{\varrho,\varepsilon}(x,0)^p dx & = \int_{\Omega_\varrho(a^0)} z^0_{\varrho,\varepsilon}(x,0)^p dx\\ &=  \frac{\varepsilon^\frac{(N-2s)p}{2}}{2}\left(\int_{B_\varrho(a^0)\setminus B_{\frac{\varrho}{2}}(a^0)}\frac{1}{(\varepsilon^2+|x-a^0|^2)^\frac{(N-2s)p}{2}}dx\right. \\ 
&\;\;\;\left. + \int_{B_{\frac{\varrho}{2}}(a^0)}\frac{1}{(\varepsilon^2+|x-a^0|^2)^\frac{(N-2s)p}{2}}dx \right)  \\
&=O\left(\varepsilon^\frac{(N-2s)p}{2} \right) + \frac{\varepsilon^\frac{(N-2s)p}{2}}{2} \omega_N \int_0^\frac{\varrho}{2}\frac{r^{N-1}}{(\varepsilon^2+r^2)^\frac{(N-2s)p}{2}}dr\\ 
&= O\left(\varepsilon^\frac{(N-2s)p}{2} \right)+\frac{\varepsilon^{N-\frac{(N-2s)p}{2}}}{2}\omega_N \int_0^\frac{\varrho}{2\varepsilon} \frac{\sigma^{N-1}}{(1+\sigma^2)^\frac{(N-2s)p}{2}}d\sigma,
	\end{split}
	\end{equation}
where $\omega_N$ is the measure of the $(N-1)$-dimensional sphere $\mathbb{S}^{N-1}$ in $\R^N$. If $p=N/(N-2s)$, we can carry on the computations in \eqref{LPnormz0rhoepsilon} to get
\begin{align*}
\int_\Omega z^0_{\varrho,\varepsilon}(x,0)^p dx &= O\left(\varepsilon^\frac{N}{2} \right) +\frac{\varepsilon^\frac{N}{2}}{2}\omega_N\int_0^\frac{\varrho}{2\varepsilon} \frac{\sigma^{N-1}}{(1+\sigma^2)^\frac{N}{2}}d\sigma\\
& = O\left(\varepsilon^\frac{N}{2} \right) + \frac{\varepsilon^\frac{N}{2}}{2}\omega_N\int_0^\frac{\varrho}{2\varepsilon} \frac{1}{\sigma}d\sigma\\ 
&=O\left(\varepsilon^\frac{N}{2} \right) + O\left(\varepsilon^\frac{N}{2}|\log\varepsilon|\right)\\
&=O\left(\varepsilon^\frac{N}{2}|\log\varepsilon|\right), 
\end{align*}
which proves $(i)$. If $p<N/(N-2s)$, then $(N-2s)p-N+1<1$ and again from \eqref{LPnormz0rhoepsilon} we deduce
\begin{align*}
\int_\Omega z^0_{\varrho,\varepsilon}(x,0)^p dx & = O\left(\varepsilon^\frac{(N-2s)p}{2} \right) + \frac{\varepsilon^\frac{(N-2s)p}{2}}{2} \omega_N \int_0^\frac{\varrho}{2}\frac{r^{N-1}}{(\varepsilon^2+r^2)^\frac{(N-2s)p}{2}}dr\\
& \leq O\left(\varepsilon^\frac{(N-2s)p}{2} \right)+\frac{\varepsilon^{\frac{(N-2s)p}{2}}}{2}\omega_N\int_0^\frac{\varrho}{2}r^{-(N-2s)p+N-1}dr\\
&=O\left(\varepsilon^\frac{(N-2s)p}{2} \right) +\varepsilon^{\frac{(N-2s)p}{2}}O(1)= O\left(\varepsilon^\frac{(N-2s)p}{2} \right),
\end{align*}	
and hence $(ii)$ is satisfied. Finally, for $p>N/(N-2s)$, one has $(N-2s)p-N+1>1$ and so,
\begin{align*}
\int_\Omega z^0_{\varrho,\varepsilon}(x,0)^p dx & \leq O\left(\varepsilon^\frac{(N-2s)p}{2} \right) + \varepsilon^{N-\frac{(N-2s)p}{2}}O(1)\\
&\;\;\; + \frac{\varepsilon^{N-\frac{(N-2s)p}{2}}}{2}\omega_N\int_1^{+\infty} \sigma^{-(N-2s)p+N-1}d\sigma\\ 
& =O\left(\varepsilon^{N-\frac{(N-2s)p}{2}} \right), 
\end{align*}
which implies the validity of $(iii)$.
\end{proof}

\begin{lemma} \label{l5}
%Let $Q\in C^0(\overline\Omega)$, $Q>0$, let $a_0\in\Omega$ be such that $Q(a^0)=\max_{\overline{\Omega}}Q:= Q_{M}$ and 
%	\begin{equation}\label{growthofQ}
%		Q(x)-Q(a^0)= o(|x-a^0|^\alpha), \quad  \text{for some } \alpha\in(0,N), \quad \text{as } x\to a^0.
%	\end{equation}
Assume $(Q_1)$. Then
	\begin{equation}\label{intQx}
		\left( \int_\Omega Q(x)z^0_{\varrho,\varepsilon}(x,0)^{2^*_s}dx\right)^\frac{2}{2^*_s}= \left( \frac{Q_M}{2}\right) ^\frac{2}{2^*_s}\left\| u_{\varepsilon,0}\right\|_{L^{2^*_s}}^{2^*_s}+o(\varepsilon^\alpha), \quad \text{as } \varepsilon\to 0^+.  
	\end{equation}
\end{lemma}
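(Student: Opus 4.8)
\emph{Proof plan.} The plan is to decompose the weight as $Q(x)=Q_M+\big(Q(x)-Q(a^0)\big)$ and estimate the two resulting integrals separately, exploiting that $z^0_{\varrho,\varepsilon}(x,0)=\phi_{\varrho,0}(x,0)\,u_{\varepsilon,0}(x)$ satisfies $0\le z^0_{\varrho,\varepsilon}(x,0)\le u_{\varepsilon,0}(x)$ and is supported in $\overline{\Omega_\varrho(a^0)}$. For the leading term, the second identity in \eqref{estimtruncextremals} (applied with $i=0$, which is legitimate since $a^0\in\SN$ as well) together with the fact that $\varrho$ is fixed gives
\[
Q_M\int_\Omega z^0_{\varrho,\varepsilon}(x,0)^{2^*_s}\,dx=\frac{Q_M}{2}\,\|u_{\varepsilon,0}\|_{L^{2^*_s}(\R^N)}^{2^*_s}+O\big(\varepsilon^N\big),
\]
and since $\alpha\in(0,N)$ in each of the cases of $(Q_1)$, the error $O(\varepsilon^N)$ is in particular $o(\varepsilon^\alpha)$.

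The heart of the matter is the perturbation term. Using the support property and $0\le\phi_{\varrho,0}\le1$,
\[
\left|\int_\Omega\big(Q(x)-Q_M\big)z^0_{\varrho,\varepsilon}(x,0)^{2^*_s}\,dx\right|\le\int_{\Omega_\varrho(a^0)}|Q(x)-Q(a^0)|\,u_{\varepsilon,0}(x)^{2^*_s}\,dx,
\]
and I would split the domain of integration into $\Omega\cap B_r(a^0)$ and $\Omega_\varrho(a^0)\setminus B_r(a^0)$ for a small $r\in(0,\varrho)$ to be fixed later. On the outer region one has $u_{\varepsilon,0}(x)^{2^*_s}=\varepsilon^N(\varepsilon^2+|x-a^0|^2)^{-N}\le\varepsilon^N r^{-2N}$ while $|Q-Q_M|$ is bounded on $\overline\Omega$, so that contribution is $O(\varepsilon^N)=o(\varepsilon^\alpha)$. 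On $\Omega\cap B_r(a^0)$ I would invoke $(Q_1)$ in its quantitative form: given $\eta>0$, choose $r=r_\eta$ so that $|Q(x)-Q(a^0)|\le\eta|x-a^0|^\alpha$ there; then, passing to polar coordinates centred at $a^0$ and rescaling $\rho=\varepsilon\sigma$,
\[
\int_{B_r(a^0)}|x-a^0|^\alpha u_{\varepsilon,0}(x)^{2^*_s}\,dx=\omega_N\,\varepsilon^\alpha\int_0^{r/\varepsilon}\frac{\sigma^{N+\alpha-1}}{(1+\sigma^2)^N}\,d\sigma\le C\,\varepsilon^\alpha,
\]
the last integral being finite precisely because $\alpha<N$. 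Hence this piece is bounded by $C\eta\,\varepsilon^\alpha$, and letting $\eta\to0^+$ (after taking $\varepsilon$ small enough, depending on $\eta$, so that the $O(\varepsilon^N)$ annulus term is also $\le\eta\,\varepsilon^\alpha$) shows that the whole perturbation term is $o(\varepsilon^\alpha)$.

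Putting the two estimates together yields $\int_\Omega Q(x)z^0_{\varrho,\varepsilon}(x,0)^{2^*_s}\,dx=\frac{Q_M}{2}\|u_{\varepsilon,0}\|_{L^{2^*_s}(\R^N)}^{2^*_s}+o(\varepsilon^\alpha)$; raising both sides to the power $\frac{2}{2^*_s}\in(0,1)$ and using that the limiting constant $\frac{Q_M}{2}\|u_{\varepsilon,0}\|_{L^{2^*_s}(\R^N)}^{2^*_s}$ is positive and independent of $\varepsilon$ — so that $t\mapsto t^{2/2^*_s}$ is Lipschitz near it and the $o(\varepsilon^\alpha)$ error survives — gives \eqref{intQx}. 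No step here is genuinely hard; the one point requiring care is the bookkeeping of the error terms, namely that the single smallness condition $\alpha<N$ built into $(Q_1)$ must simultaneously make the rescaled profile integral converge and let the truncation/annulus errors $O(\varepsilon^N)$ be absorbed into $o(\varepsilon^\alpha)$, together with the fact that the decay of $Q$ is only an $o(\cdot)$, which forces the $\eta$–$r$–$\varepsilon$ quantifier argument above rather than a one-line bound.
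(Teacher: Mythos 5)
Your proposal is correct and follows essentially the same route as the paper: fix $\eta>0$, use the quantitative form of $(Q_1)$ on a small ball around $a^0$, rescale in polar coordinates (the profile integral converging precisely because $\alpha<N$), bound the outer annulus contribution by $O(\varepsilon^N)=o(\varepsilon^\alpha)$, and combine with \eqref{estimtruncextremals} before raising to the power $2/2^*_s$. Your explicit justification of that last step (the main term being a positive constant independent of $\varepsilon$, so $t\mapsto t^{2/2^*_s}$ is Lipschitz there) is a point the paper leaves implicit, but it is the same argument.
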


\begin{proof}
By $(Q_1)$, fixed $\eta>0$ there exists $\bar\varrho>0$ such that
	$$
	|Q(x)-Q(a^0)|<\eta|x-a^0|^\alpha, \quad \text{for all } x\in\Omega_{\bar\varrho}(a^0). % 0<|x-a^0|<\bar\varrho.
	$$
	So, for sufficiently small $\varepsilon$ and $\bar\varrho>\sqrt{\varepsilon}$, we obtain
	\begin{align*}
		&\left| \int_\Omega Q(x)z^0_{\varrho,\varepsilon}(x,0)^{2^*_s}dx - \int_\Omega Q(a^0)z^0_{\varrho,\varepsilon}(x,0)^{2^*_s}dx\right| \\
		&\leq \frac{1}{2}\int_{\overline{B}_\varrho(a^0)} |Q(x)-Q(a^0)|z^0_{\varrho,\varepsilon}(x,0)^{2^*_s}dx\\
		& \leq \frac{\eta} {2}\varepsilon^N\int_{\overline{B}_{\bar\varrho}(a^0)} |x-a^0|^\alpha \frac{1}{\left(\varepsilon^2 + |x-a^0|^2 \right)^N }dx+\frac{\varepsilon^N}{2}\int_{\overline{B}_\varrho(a^0)\setminus B_{\bar\varrho}(a^0)} \frac{1}{\left(\varepsilon^2 + |x-a^0|^2 \right)^N }dx\\
		&=\frac{\eta\omega_N}{2}\varepsilon^N \int_0^{\bar\varrho}\frac{r^{\alpha+N-1}}{(\varepsilon^2+r^2)^N}dr + \frac{\omega_N}{2}\varepsilon^N \int_{\bar\varrho}^\varrho\frac{r^{N-1}}{(\varepsilon^2+r^2)^N}dr\\
		&=\frac{\eta\omega_N}{2}\varepsilon^\alpha \int_0^\frac{\bar\varrho}{\varepsilon}\frac{R^{\alpha+N-1}}{(1+R^2)^N}dR + \frac{\omega_N}{2} \int_\frac{\bar\varrho}{\varepsilon}^\frac{\varrho}{\varepsilon}\frac{R^{N-1}}{(1+R^2)^N}dR\\
		&\leq \eta C_1\varepsilon^\alpha + C_2\varepsilon^N,
	\end{align*}	
	for suitable constants $C_1,C_2>0$. Dividing through by $\varepsilon^\alpha$ and taking the $\limsup$ as $\varepsilon\to 0$, by the arbitrariness of $\eta$ we arrive at 
	$$
	\int_\Omega Q(x)z^0_{\varrho,\varepsilon}(x,0)^{2^*_s}dx= Q_M\int_\Omega  z^0_{\varrho,\varepsilon}(x,0)^{2^*_s}dx + o(\varepsilon^\alpha),
	$$
	and, by \eqref{estimtruncextremals},
	\begin{align*}
		\int_\Omega Q(x)z^0_{\varrho,\varepsilon}(x,0)^{2^*_s}dx & = \frac{Q_M}{2} \left\| u_{\varepsilon,0}\right\|^{2^*_s}_{L^{2^*}_s} + O\left( \left(\frac{\varepsilon}{\varrho}\right)^N\right) + o(\varepsilon^\alpha) = \frac{Q_M}{2} \left\| u_{\varepsilon,0}\right\|^{2^*_s}_{L^{2^*}_s}  + o(\varepsilon^\alpha).  
	\end{align*}
	Raising to the power $2/2^*_s$ both sides we get \eqref{intQx}.
\end{proof}

\begin{lemma}\label{propvarphilambda}
Let $q>1$. Then, for all $\lambda\geq 0$ and $\varepsilon>0$, the function $\varphi_{\lambda,\varepsilon}:[0,+\infty)\to\R$, given by
\begin{equation}\label{fibermap}
\varphi_{\lambda,\varepsilon}(t):=J_\lambda(t z^0_{\varrho,\varepsilon}), \quad\text{for all } t\geq 0,
\end{equation}
has a unique global maximizer $t_{\lambda,\varepsilon}>0$.  In particular, one has
\begin{equation}\label{t0epsilon}
t_{0,\varepsilon}=\left( \frac{\left\|z^0_{\varrho,\varepsilon} \right\|^2_{\X} }{\int_\Omega Q(x)z^0_{\varrho,\varepsilon}(x,0)^{2^*_s}dx}\right)^\frac{1}{2^*_s-2},
\end{equation}
and 
\begin{equation}\label{varphi0t0epsilon}
\varphi_{0,\varepsilon}(t_{0,\varepsilon})=\frac{s}{N}\frac{S(s,N)^\frac{N}{2s}}{2Q_M^\frac{N-2s}{2s}} + o(\varepsilon^\alpha)+ O\left(\left(\frac{\varepsilon}{\varrho}\right)^{N-2s}\right).
\end{equation}
Moreover, there exists $C_0,T_2>0$ constants, independent of both $\lambda$ and $\varepsilon$, such that
\begin{equation}\label{estimtlambdaepsq>1}
C_0(1+\lambda  \beta(\varepsilon))^{\frac{1}{1-q}} \leq t_{\lambda,\varepsilon} \leq T_2,
\end{equation}
for all $\lambda>0$ and for small $\varepsilon>0$ with $\beta(\varepsilon)$ given by
$$
\beta(\varepsilon):=\left\lbrace   
\begin{array}{ll}
\varepsilon^\frac{N}{2}|\log\varepsilon|  & \text{ if } \displaystyle q+1=\frac{N}{N-2s},\smallskip\\
\varepsilon^\frac{(N-2s)(q+1)}{2}  & \text{ if } \displaystyle q+1<\frac{N}{N-2s},\smallskip\\
\varepsilon^{N-\frac{(N-2s)(q+1)}{2}} & \text{ if } \displaystyle q+1>\frac{N}{N-2s}.
\end{array}
\right. 
$$	
If $q=1$, then $\varphi_{\lambda,\varepsilon}$ has a unique maximizer $t_{\lambda,\varepsilon}$ for every $\lambda\in(0,\lambda_1^s)$ and $\varepsilon>0$. Moreover, there exists a constant $T_1>0$, independent of both $\lambda$ and $\varepsilon$, such that
\begin{equation}\label{estimtlambdaepsq=1}
t_{\lambda,\varepsilon}\geq T_1,
\end{equation}
for every $\lambda\in(0,\lambda^s_1)$ and for small $\varepsilon>0$.
\end{lemma}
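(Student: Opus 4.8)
The plan is to study the single-variable function $\varphi_{\lambda,\varepsilon}(t) = \tfrac{t^2}{2}\|z^0_{\varrho,\varepsilon}\|_{\X}^2 - \tfrac{t^{2^*_s}}{2^*_s}\int_\Omega Q(x)z^0_{\varrho,\varepsilon}(x,0)^{2^*_s}dx - \tfrac{\lambda t^{q+1}}{q+1}\|z^0_{\varrho,\varepsilon}(\cdot,0)\|_{L^{q+1}(\Omega)}^{q+1}$ directly. Write $A := \|z^0_{\varrho,\varepsilon}\|_{\X}^2$, $B := \int_\Omega Q(x)z^0_{\varrho,\varepsilon}(x,0)^{2^*_s}dx$, $D := \|z^0_{\varrho,\varepsilon}(\cdot,0)\|_{L^{q+1}(\Omega)}^{q+1}$, so $\varphi_{\lambda,\varepsilon}(t) = \tfrac{A}{2}t^2 - \tfrac{B}{2^*_s}t^{2^*_s} - \tfrac{\lambda D}{q+1}t^{q+1}$, with $A,B,D > 0$. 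The first step is to prove existence and uniqueness of the global maximizer: $\varphi_{\lambda,\varepsilon}(0)=0$, $\varphi_{\lambda,\varepsilon}(t)\to -\infty$ as $t\to+\infty$ (since $2^*_s > q+1 \ge 2$ when $q>1$, and $2^*_s>2$ always), and $\varphi'_{\lambda,\varepsilon}(t)>0$ for small $t>0$, so a positive maximizer exists. For uniqueness, when $q>1$ one divides $\varphi'_{\lambda,\varepsilon}(t)=0$ by $t$ and shows $g(t) := A - Bt^{2^*_s-2} - \lambda D t^{q-1}$ is strictly decreasing on $(0,\infty)$ (both exponents $2^*_s-2$ and $q-1$ are positive), hence has a unique zero $t_{\lambda,\varepsilon}$; when $q=1$ the equation reads $A - \lambda D' = B t^{2^*_s-2}$ with $D' = \|z^0_{\varrho,\varepsilon}(\cdot,0)\|_{L^2(\Omega)}^2$, which has a unique positive solution provided $A - \lambda D' > 0$, and this positivity is exactly where $\lambda < \lambda_1^s$ enters, via $\|z^0_{\varrho,\varepsilon}(\cdot,0)\|_{L^2(\Omega)}^2 \le \lambda_1^{-s}\|z^0_{\varrho,\varepsilon}\|_{\X}^2$ (the variational characterization of the first mixed eigenvalue).

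For the $\lambda=0$ case, $\varphi_{0,\varepsilon}$ is the pure term $\tfrac{A}{2}t^2 - \tfrac{B}{2^*_s}t^{2^*_s}$, whose maximizer is the explicit $t_{0,\varepsilon}=(A/B)^{1/(2^*_s-2)}$ in \eqref{t0epsilon}, with maximum value $\tfrac{s}{N}A^{N/(2s)}B^{-(N-2s)/(2s)}$. Then I substitute the asymptotics: $A = \|z^0_{\varrho,\varepsilon}\|_{\X}^2 = \tfrac12\|w_{\varepsilon,0}\|_{\X}^2 + O((\varepsilon/\varrho)^{N-2s})$ from \eqref{estimtruncextremals}, and $B^{2/2^*_s} = (Q_M/2)^{2/2^*_s}\|u_{\varepsilon,0}\|_{L^{2^*_s}}^{2^*_s} + o(\varepsilon^\alpha)$ from Lemma \ref{l5}; using that $\|w_{\varepsilon,0}\|_{\X}^2 = S(s,N)\|u_{\varepsilon,0}\|_{L^{2^*_s}}^{2^*_s \cdot (2/2^*_s)}$ actually equals $S(s,N)^{N/(2s)}$-type constant because $u_{\varepsilon,0}$ is an extremal (so $\|w_{\varepsilon,0}\|_{\X}^2 = S(s,N)\|u_{\varepsilon,0}\|_{2^*_s}^2$ and $\|u_{\varepsilon,0}\|_{2^*_s}^{2^*_s}$ is a fixed $\varepsilon$-independent constant), a short computation with the exponent bookkeeping $\tfrac{N}{2s} - \tfrac{N-2s}{2s}\cdot\tfrac{2^*_s}{2} = \tfrac12$ gives \eqref{varphi0t0epsilon}. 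The main care here is tracking that the $O((\varepsilon/\varrho)^{N-2s})$ and $o(\varepsilon^\alpha)$ error terms survive the algebraic manipulations (raising to powers, dividing) without degrading — this is routine since all quantities are bounded away from $0$ and $\infty$.

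The heart of the lemma is the two-sided bound \eqref{estimtlambdaepsq>1} on $t_{\lambda,\varepsilon}$ for $q>1$. For the \emph{upper} bound $t_{\lambda,\varepsilon}\le T_2$: from $g(t_{\lambda,\varepsilon})=0$ and $\lambda D t_{\lambda,\varepsilon}^{q-1}\ge 0$ we get $B t_{\lambda,\varepsilon}^{2^*_s-2}\le A$, hence $t_{\lambda,\varepsilon}\le (A/B)^{1/(2^*_s-2)} = t_{0,\varepsilon}$, and since $t_{0,\varepsilon}$ converges to a finite positive constant as $\varepsilon\to0$ (by the asymptotics above), it is bounded by some $T_2$ uniformly in small $\varepsilon$ and all $\lambda\ge0$. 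For the \emph{lower} bound: from $g(t_{\lambda,\varepsilon})=0$, $A = B t_{\lambda,\varepsilon}^{2^*_s-2} + \lambda D t_{\lambda,\varepsilon}^{q-1}$; using the upper bound $t_{\lambda,\varepsilon}\le T_2$ to estimate $t_{\lambda,\varepsilon}^{2^*_s-2}\le T_2^{2^*_s-q-1}t_{\lambda,\varepsilon}^{q-1}$ (valid since $2^*_s-2 \ge q-1$), we obtain $A \le (B T_2^{2^*_s-q-1} + \lambda D)t_{\lambda,\varepsilon}^{q-1}$, i.e. $t_{\lambda,\varepsilon}^{q-1}\ge A/(BT_2^{2^*_s-q-1}+\lambda D) \ge c/(1 + \lambda D/C)$ for $\varepsilon$-uniform constants. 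The decisive point is then identifying the $\varepsilon$-order of $D = \|z^0_{\varrho,\varepsilon}(\cdot,0)\|_{L^{q+1}(\Omega)}^{q+1}$: Lemma \ref{Lpnormextremals} with $p = q+1$ gives exactly $D \asymp \beta(\varepsilon)$ in the three regimes according to the sign of $(q+1)-\tfrac{N}{N-2s}$, and since $A$ is bounded above and below by positive constants, $t_{\lambda,\varepsilon}^{q-1}\ge C_0^{q-1}(1+\lambda\beta(\varepsilon))^{-1}$, which upon raising to the power $1/(q-1)$ (note $\tfrac{1}{q-1} = -\tfrac{1}{1-q}$, so the exponent $\tfrac{1}{1-q}$ in the statement is negative, consistent with a lower bound that decreases in $\lambda$) yields \eqref{estimtlambdaepsq>1}. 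Finally, for $q=1$, one has the explicit $t_{\lambda,\varepsilon}^{2^*_s-2} = (A-\lambda D')/B$ where $A - \lambda D' \ge (1-\lambda/\lambda_1^s)A$ is bounded below by a positive $\varepsilon$-uniform constant (for fixed $\lambda<\lambda_1^s$), and $B$ is bounded above, giving \eqref{estimtlambdaepsq=1}. The main obstacle is bookkeeping: ensuring every constant ($C_0, T_1, T_2$) is genuinely independent of $\lambda$ and $\varepsilon$, which forces one to always chain the estimates through the $\varepsilon$-uniform bounds on $A$ and $t_{0,\varepsilon}$ rather than through quantities that could degenerate as $\varepsilon\to0$.
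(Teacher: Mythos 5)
For $q>1$ your argument coincides in substance with the paper's: uniqueness via the strict monotonicity of $g(t)=A-Bt^{2^*_s-2}-\lambda Dt^{q-1}$, the upper bound $t_{\lambda,\varepsilon}\le (A/B)^{1/(2^*_s-2)}=t_{0,\varepsilon}\le T_2$ using \eqref{estimtruncextremals} and Lemma \ref{l5}, the lower bound via $t_{\lambda,\varepsilon}^{2^*_s-2}\le T_2^{2^*_s-q-1}t_{\lambda,\varepsilon}^{q-1}$ together with $D\asymp\beta(\varepsilon)$ from Lemma \ref{Lpnormextremals}, and the computation of \eqref{t0epsilon}--\eqref{varphi0t0epsilon} (your observation that the powers of $\|u_{\varepsilon,0}\|_{2^*_s}$ cancel, so no normalization issue arises, is correct and makes this step clean).

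The one genuine discrepancy is in the $q=1$ lower bound \eqref{estimtlambdaepsq=1}. You estimate $A-\lambda D'\ge(1-\lambda/\lambda_1^s)A$ via the eigenvalue inequality, which is the right tool for uniqueness of the maximizer for every $\varepsilon>0$, but for the bound on $t_{\lambda,\varepsilon}$ it only yields a constant proportional to $(1-\lambda/\lambda_1^s)^{1/(2^*_s-2)}$, which degenerates as $\lambda\to\lambda_1^s$; the statement requires $T_1$ independent of $\lambda$ on the whole interval $(0,\lambda_1^s)$. The paper avoids this by exploiting instead the $\varepsilon$-smallness of $D'=\|z^0_{\varrho,\varepsilon}(\cdot,0)\|_{L^2(\Omega)}^2$: by Lemma \ref{Lpnormextremals} with $p=2=q+1$ one has $D'=O(\beta(\varepsilon))$, hence $\lambda D'\le\lambda_1^s\,C\beta(\varepsilon)\to 0$ as $\varepsilon\to 0^+$, so that $A-\lambda D'\ge A/2$ for all $\lambda\in(0,\lambda_1^s)$ and all small $\varepsilon$, giving a $\lambda$-uniform $T_1$. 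You already use exactly this ingredient in the $q>1$ case, so the fix is immediate, but as written your $q=1$ step does not prove the claimed $\lambda$-independence of $T_1$.
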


\begin{proof}
Assume first $q>1$. By \eqref{fibermap}, we deduce $\varphi_{\lambda,\varepsilon}'(t)=tg_{\lambda,\varepsilon}(t)$ for every $\lambda,\varepsilon>0$, where
\begin{equation}\label{derivativevarphi}	
g_{\lambda,\varepsilon}(t):=\kappa_s \int_{\mathcal C_{\Omega}}y^{1-2s}|\nabla z^0_{\varrho,\varepsilon}|^2 dxdy- t^{2^*_s-2}\!\!\int_\Omega Q(x)z^0_{\varrho,\varepsilon}(x,0)^{2^*_s}dx
-\lambda t^{q-1}\! \int_\Omega z^0_{\varrho,\varepsilon}(x,0)^{q+1}dx.
\end{equation}
Clearly $g_{\lambda,\varepsilon}(0)>0$, $g_{\lambda,\varepsilon}(t)\rightarrow -\infty$  as $t\to +\infty$ and $g_{\lambda,\varepsilon}'(t)<0$ for all $t\in (0,+\infty)$. Thus, there exists a unique $t_{\lambda,\varepsilon}\in (0,+\infty)$ such that $g_{\lambda,\varepsilon}(t)>0$ in $(0,t_{\lambda,\varepsilon})$ and  $g_{\lambda,\varepsilon}(t)<0$ in $t\in (t_{\lambda,\varepsilon},+\infty)$, that is, $t_{\lambda,\varepsilon}$ is the unique   
maximizer  of $\varphi_{\lambda,\varepsilon}$ in $(0,+\infty)$. Next, let us show that 
$$
T_\epsilon :=\frac{ \kappa_s \displaystyle{\int_{\mathcal C_{\Omega}}y^{1-2s}|\nabla z^0_{\varrho,\varepsilon}|^2 dxdy}}{\displaystyle{\int_\Omega Q(x)z^0_{\varrho,\varepsilon}(x,0)^{2^*_s}dx} },
$$
is uniformly upper bounded with respect to  small $\varepsilon.$ Actually, from \eqref{estimtruncextremals} and \eqref{intQx}, we have 
\begin{align*}
	T_\epsilon & =\frac{\displaystyle\frac{1}{2}\left\| w_{\varepsilon,0}\right\|_{\X}^2 + O\left( \left( \frac{\varepsilon}{\varrho}\right)^{N-2s} \right)}{\displaystyle  \frac{Q_M}{2} \left\| u_{\varepsilon,0}\right\|_{L^{2^*_s}}^{2^*_s}+o(\varepsilon^\alpha)} 
	\leq  \frac{\displaystyle\frac{1}{2}\|w_{\varepsilon,0}\|^2_{\mathcal{X}^s\left(\R_+^{N+1}\right) }  + O\left( \left( \frac{\varepsilon}{\varrho}\right)^{N-2s} \right)}{\displaystyle  \frac{Q_M}{2} \left\| u_{\varepsilon,0}\right\|_{L^{2^*_s}}^{2^*_s}+o(\varepsilon^\alpha)} \\
	&\leq \frac{S(s,N)}{Q_M}  + O\left(  \left( \frac{\varepsilon}{\varrho}\right)^{N-2s}\right). 
\end{align*}
Thus, since $g_{\lambda,\varepsilon}(t_{\lambda,\varepsilon})=0$, there exists some $T_2$ independent of both $\lambda$ and small $\varepsilon$ such that
 $$
 t_{\lambda,\varepsilon}\leq T_\epsilon^{\frac{1}{2^*_s-2}}\leq T_2.
 $$  
 On the other hand,  by \eqref{estimtruncextremals},  Lemmas  \ref{Lpnormextremals} and  \ref{l5}, we get
\begin{align*}
	t_{\lambda,\varepsilon}^{q-1} &=\frac{ \kappa_s \displaystyle{\int_{\mathcal C_{\Omega}}y^{1-2s}|\nabla z^0_{\varrho,\varepsilon}|^2 dxdy}} { t_{\lambda,\varepsilon}^{2^*_s-q-1}\displaystyle{\int_\Omega Q(x)z^0_{\varrho,\varepsilon}(x,0)^{2^*_s}dx+\lambda  \int_\Omega z^0_{\varrho,\varepsilon}(x,0)^{q+1}dx}}\\
	&\geq \frac{ \kappa_s \displaystyle{\int_{\mathcal C_{\Omega}}y^{1-2s}|\nabla z^0_{\varrho,\varepsilon}|^2 dxdy}} { T_{2}^{2^*_s-q-1}\displaystyle{\int_\Omega Q(x)z^0_{\varrho,\varepsilon}(x,0)^{2^*_s}dx+\lambda  \int_\Omega z^0_{\varrho,\varepsilon}(x,0)^{q+1}dx}} \\
&\geq \frac{C\displaystyle{\left(\int_\Omega z^0_{\varrho,\varepsilon}(x,0)^{2^*_s}dx\right)^{\frac{2}{2_s^*}}}   } { T_{2}^{2^*_s-q-1}\displaystyle{\int_\Omega Q(x)z^0_{\varrho,\varepsilon}(x,0)^{2^*_s}dx+\lambda  \int_\Omega z^0_{\varrho,\varepsilon}(x,0)^{q+1}dx}} \\
&\geq \frac{  \left\| u_{1,0}\right\|_{L^{2^*_s}(\R^N)}+o(\varepsilon) } { T_{2}^{2^*_s-q-1} \displaystyle{\left(\frac{Q_M}{2} \left\| u_{1,0}\right\|_{L^{2^*_s}}^{2^*_s(\R^N)}+o(\varepsilon^\alpha)\right)+C\lambda  \beta(\varepsilon)}}
\end{align*}
which yields 
$$
t_{\lambda,\varepsilon} \geq C_0(1+\lambda  \beta(\varepsilon))^{\frac{1}{1-q}} 
$$ 
for every $\lambda>0$, for small $\varepsilon$ and for some positive constant $C_0>0$. This proves \eqref{estimtlambdaepsq>1}. 

If $q=1$, for any $\lambda\in (0,\lambda_1^s)$ and small $\varepsilon$,  we get 
\begin{align*}
	t_{\lambda,\varepsilon}^{2_s^*-2}&= \frac{ \kappa_s \displaystyle{\int_{\mathcal C_{\Omega}}y^{1-2s}|\nabla z^0_{\varrho,\varepsilon}|^2 dxdy-\lambda  \int_\Omega z^0_{\varrho,\varepsilon}(x,0)^{2}dx}} { \displaystyle{\int_\Omega Q(x)z^0_{\varrho,\varepsilon}(x,0)^{2^*_s}dx}}\\
&\geq  \frac{  \displaystyle{ \left\| u_{1,0}\right\|_{L^{2^*_s}(\R^N)}+o(\varepsilon)-\lambda   \int_\Omega z^0_{\varrho,\varepsilon}(x,0)^{2}dx}} { \displaystyle{\int_\Omega Q(x)z^0_{\varrho,\varepsilon}(x,0)^{2^*_s}dx}}\\
& \geq C_0(1-C\lambda_1^s \beta(\varepsilon)), % \ \ \text{for small } \varepsilon,
\end{align*} 
which gives \eqref{estimtlambdaepsq=1}. 
%$$t_{\lambda,\varepsilon}\geq   T_1  \ \ \text{for small } \varepsilon.$$
The expression in \eqref{t0epsilon} comes out by picking $\lambda=0$ in \eqref{derivativevarphi} and solving the equation $\varphi'_{0,\varepsilon}(t)=0$ for $t>0$. Finally, by using also \eqref{estimtruncextremals} and \eqref{intQx}, we have
 
\begin{align*}
\varphi_0(t_{0,\varepsilon}) & =J_0(t_{0,\varepsilon}z^0_{\varrho,\varepsilon}) =\frac{s}{N} \left(\frac{\left\| z^0_{\varrho,\varepsilon}\right\|_{\X}^2 }{\int_\Omega Q(x)z^0_{\varrho,\varepsilon}(x,0)^{2^*_s} dx} \right)^\frac{2}{2^*_s-2}\left\| z^0_{\varrho,\varepsilon}\right\|_{\X}^2\\
&=\frac{s}{N}\left( \frac{\left\| z^0_{\varrho,\varepsilon}\right\|_{\X}^2}{\left( \int_\Omega Q(x)z^0_{\varrho,\varepsilon}(x,0)^{2^*_s} dx\right)^\frac{2}{2^*_s}}\right)^\frac{N}{2s}\\
&=\frac{s}{N}\left( \frac{\frac{1}{2}\left\| w_{\varepsilon,0}\right\|_{\X}^2+ O\left( \left( \frac{\varepsilon}{\varrho}\right)^{N-2s}\right)  }{\left( \frac{Q_M}{2}\right) ^\frac{2}{2^*_s}\left\| u_{\varepsilon,0}\right\|^2_{2^*_s} +o(\varepsilon^\alpha)}\right)^\frac{N}{2s}\\
& =\frac{s}{N}\left( \frac{\frac{1}{2}S(s,N)^\frac{N}{2s}+ O\left( \left( \frac{\varepsilon}{\varrho}\right)^{N-2s}\right)  }{\left( \frac{Q_M}{2}\right) ^\frac{2}{2^*_s}S(s,N)^\frac{N-2s}{2s} +o(\varepsilon^\alpha)}\right)^\frac{N}{2s}\\  
&=\frac{s}{N}\left[\left( \frac{1}{2}\right)^\frac{N}{2s} S(s,N)^{\left( \frac{N}{2s}\right)^2}\left(1+ O\left( \left( \frac{\varepsilon}{\varrho}\right)^{N-2s}\right)\right)\left( \frac{Q_M}{2}\right) ^{-\frac{2}{2^*_s}\frac{N}{2s}}S(s,N)^{-\frac{N-2s}{2s}\frac{N}{2s}}(1+o(\varepsilon^\alpha))  \right] \\
&=\frac{s}{N}\frac{S(s,N)^\frac{N}{2s}}{2Q_M^\frac{N-2s}{2s}}\left( 1 + o(\varepsilon^\alpha)+ O\left(\left(\frac{\varepsilon}{\varrho}\right)^{N-2s}\right)\right),
\end{align*}
and \eqref{varphi0t0epsilon} is proved as well. The proof is then concluded.
\end{proof}

\begin{proposition}\label{supfiberingmap}
Under assumption $(Q_1)$ one has
\[%\begin{equation}\label{supJlambdaupperbound}
\sup_{t\geq 0}J_\lambda(tz^0_{\varrho,\varepsilon}) < \frac{s}{N}\frac{S(s,N)^\frac{N}{2s}}{2 Q_M^\frac{N-2s}{2s}},
\]%\end{equation}
\begin{itemize}
\item for sufficiently large $\lambda$ in the case $(Q_1)$-$(i)$;
\item for every $\lambda>0$ in the cases $(Q_1)$-$(ii)$ and $(Q_1)$-$(iii)$, $q\neq 1$;
\item  for $\lambda\in(0,\lambda_1^s)$ in the case $(Q_1)$-$(iii)$, $q=1$. 
\end{itemize}
\end{proposition}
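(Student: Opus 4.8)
The plan is to compare the supremum of the fiber $\varphi_{\lambda,\varepsilon}(t)=J_\lambda(t\,z^0_{\varrho,\varepsilon})$ with that of the unperturbed fiber $\varphi_{0,\varepsilon}$, whose maximal value is pinned down by \eqref{varphi0t0epsilon}, and to exploit that the subcritical term strictly lowers the energy. By Lemma \ref{propvarphilambda} the function $\varphi_{\lambda,\varepsilon}$ attains its supremum on $[0,+\infty)$ at $t_{\lambda,\varepsilon}>0$ in all the ranges of $q$ and $\lambda$ under consideration, so $\sup_{t\ge 0}J_\lambda(t\,z^0_{\varrho,\varepsilon})=\varphi_{\lambda,\varepsilon}(t_{\lambda,\varepsilon})$. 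Since $\varphi_{\lambda,\varepsilon}(t)=\varphi_{0,\varepsilon}(t)-\frac{\lambda t^{q+1}}{q+1}\|z^0_{\varrho,\varepsilon}(\cdot,0)\|_{L^{q+1}(\Omega)}^{q+1}$ and $t_{0,\varepsilon}$ is the global maximizer of $\varphi_{0,\varepsilon}$, one has
\begin{equation*}
\varphi_{\lambda,\varepsilon}(t_{\lambda,\varepsilon})\le \varphi_{0,\varepsilon}(t_{0,\varepsilon})-\frac{\lambda t_{\lambda,\varepsilon}^{q+1}}{q+1}\left\|z^0_{\varrho,\varepsilon}(\cdot,0)\right\|_{L^{q+1}(\Omega)}^{q+1},
\end{equation*}
so, setting $K:=\frac{s}{N}\frac{S(s,N)^{N/(2s)}}{2\,Q_M^{(N-2s)/(2s)}}$ and recalling from \eqref{varphi0t0epsilon} that $|\varphi_{0,\varepsilon}(t_{0,\varepsilon})-K|=o(\varepsilon^\alpha)+O\big((\varepsilon/\varrho)^{N-2s}\big)$, the whole statement reduces to checking that, for a suitable choice of the parameters,
\begin{equation*}
\frac{\lambda t_{\lambda,\varepsilon}^{q+1}}{q+1}\left\|z^0_{\varrho,\varepsilon}(\cdot,0)\right\|_{L^{q+1}(\Omega)}^{q+1}\;>\;o(\varepsilon^\alpha)+O\!\left(\left(\varepsilon/\varrho\right)^{N-2s}\right).\tag{$\star$}
\end{equation*}

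In the cases $(Q_1)$-$(ii)$ and $(Q_1)$-$(iii)$ one has $\alpha=N-\frac{(N-2s)(q+1)}{2}$, and a direct inspection of the admissible ranges of $q$ gives $\alpha<N-2s$; hence the right-hand side of $(\star)$ is simply $o(\varepsilon^\alpha)$. For the left-hand side I would use Lemma \ref{Lpnormextremals} with $p=q+1$, which gives $\|z^0_{\varrho,\varepsilon}(\cdot,0)\|_{L^{q+1}(\Omega)}^{q+1}\ge c\,\beta(\varepsilon)$, together with $\beta(\varepsilon)\ge c\,\varepsilon^\alpha$ for small $\varepsilon$ (indeed $\beta(\varepsilon)=\varepsilon^\alpha$ when $q+1>\frac{N}{N-2s}$, while $\alpha\ge\frac N2\ge\frac{(N-2s)(q+1)}{2}$ in the other two sub-cases). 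Moreover $t_{\lambda,\varepsilon}$ is bounded below by a positive constant independent of $\varepsilon$ for each fixed $\lambda$: for $q>1$ this follows from \eqref{estimtlambdaepsq>1}, since $(1+\lambda\beta(\varepsilon))^{1/(1-q)}\to1$ as $\varepsilon\to0^+$, and for $q=1$ it is exactly \eqref{estimtlambdaepsq=1}. Hence the left-hand side of $(\star)$ is bounded below by $c\,\lambda\,\varepsilon^\alpha$, which dominates $o(\varepsilon^\alpha)$ for every fixed admissible $\lambda$ once $\varepsilon$ is small enough; this yields the conclusion for every $\lambda>0$, resp.\ for $\lambda\in(0,\lambda_1^s)$ when $q=1$.

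The case $(Q_1)$-$(i)$ is the delicate one, and I expect it to be the main obstacle. Here $q>1$, but $\alpha\in(0,N)$ is arbitrary and in general $\alpha\ge N-2s$, so the error $O\big((\varepsilon/\varrho)^{N-2s}\big)$ is no longer negligible against $\varepsilon^\alpha$; worse, for $q$ in this range the vanishing order of $\beta(\varepsilon)$ is at least $N-2s$, so for a fixed $\lambda$ the left-hand side of $(\star)$, of order $\lambda\,\beta(\varepsilon)\lesssim\lambda\,\varepsilon^{N-2s}$, cannot by itself beat the error, and neither can a large $\lambda$ with $\varepsilon$ fixed, since then $\lambda\beta(\varepsilon)(1+\lambda\beta(\varepsilon))^{-\gamma}\to0$. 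The remedy is to tune $\varepsilon$ to $\lambda$: combining $\|z^0_{\varrho,\varepsilon}(\cdot,0)\|_{L^{q+1}(\Omega)}^{q+1}\ge c\,\beta(\varepsilon)$ with the sharp lower bound \eqref{estimtlambdaepsq>1} for $t_{\lambda,\varepsilon}$ gives
\begin{equation*}
\frac{\lambda t_{\lambda,\varepsilon}^{q+1}}{q+1}\left\|z^0_{\varrho,\varepsilon}(\cdot,0)\right\|_{L^{q+1}(\Omega)}^{q+1}\ge c\,\frac{\lambda\,\beta(\varepsilon)}{\big(1+\lambda\,\beta(\varepsilon)\big)^{\gamma}},\qquad \gamma:=\frac{q+1}{q-1}>1;
\end{equation*}
then, given $\lambda$ large, I would choose $\varepsilon=\varepsilon_\lambda\to0^+$ with $\lambda\,\beta(\varepsilon_\lambda)=1$ (possible since $\beta$ is continuous with $\beta(0^+)=0$), so that the left-hand side of $(\star)$ stays above the fixed positive constant $c\,2^{-\gamma}$ while its right-hand side $o(\varepsilon_\lambda^\alpha)+O\big((\varepsilon_\lambda/\varrho)^{N-2s}\big)$ tends to $0$, and $(\star)$ holds for all $\lambda$ large. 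The whole point is thus that $\alpha$ bears no relation to the vanishing rate of the perturbation, so $\varepsilon$ and $\lambda$ must be balanced simultaneously, and it is precisely the two-sided control of $t_{\lambda,\varepsilon}$ in Lemma \ref{propvarphilambda} that makes this possible; in cases $(ii)$–$(iii)$ the relation $\alpha<N-2s$, forced by the very definition of $\alpha$, removes this difficulty.
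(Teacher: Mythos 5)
Your argument is correct and is essentially the paper's own proof: the same comparison $\varphi_{\lambda,\varepsilon}(t_{\lambda,\varepsilon})\le\varphi_{0,\varepsilon}(t_{0,\varepsilon})-\frac{\lambda}{q+1}\,t_{\lambda,\varepsilon}^{q+1}\|z^0_{\varrho,\varepsilon}(\cdot,0)\|_{L^{q+1}(\Omega)}^{q+1}$, the same use of Lemma \ref{Lpnormextremals}, \eqref{varphi0t0epsilon} and the bounds \eqref{estimtlambdaepsq>1}--\eqref{estimtlambdaepsq=1}, with fixed $\lambda$ and $\varepsilon\to0^+$ in cases $(Q_1)$-$(ii)$,$(iii)$ and the balancing $\lambda\beta(\varepsilon)=1$ in case $(Q_1)$-$(i)$ (the paper realizes it through the explicit choices $\lambda=\varepsilon^{-\frac{(N-2s)(q+1)}{2}}$, $\lambda=\varepsilon^{-\frac N2}|\log\varepsilon|^{-1}$, $\lambda=\varepsilon^{-N+\frac{(N-2s)(q+1)}{2}}$). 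Your verification that $\alpha<N-2s$ in cases $(ii)$--$(iii)$ and that $\beta(\varepsilon)\ge c\,\varepsilon^\alpha$ in the sub-cases $q+1\le N/(N-2s)$ is, if anything, slightly more explicit than the paper's.
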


\begin{proof}
Notice at first that one has
\begin{align*}
\sup_{t\geq 0}	\varphi_{\lambda,\varepsilon}(t) &\leq \varphi_{\lambda,\varepsilon}(t_{\lambda,\varepsilon}) = \varphi_0 (t_{\lambda,\varepsilon}) - \frac{\lambda}{q+1}t_{\lambda,\varepsilon}^{q+1}\int_\Omega z^0_{\varrho,\varepsilon}(x,0)^{q+1}dx \\
& \leq \varphi_0 (t_{0,\varepsilon}) - \frac{\lambda}{q+1}t_{\lambda,\varepsilon}^{q+1}\int_\Omega z^0_{\varrho,\varepsilon}(x,0)^{q+1}dx\\
&=\frac{s}{N}\frac{S(s,N)^\frac{N}{2s}}{2 Q_M^\frac{N-2s}{2s}}\left( 1 + o(\varepsilon^\alpha)+ O\left(\left(\frac{\varepsilon}{\varrho}\right)^{N-2s}\right)\right)- \frac{\lambda}{q+1}t_{\lambda,\varepsilon}^{q+1}\int_\Omega z^0_{\varrho,\varepsilon}(x,0)^{q+1}dx.
\end{align*}
Now consider the three alternative cases in $(Q_1)$.

$(i)$ If $N=2$ or $N=3$ and $\displaystyle q+1\in\left(\left. 2,\frac{4s}{N-2s}\right]\right.$, %by taking also account of Lemma \ref{Lpnormextremals}, we obtain
we claim that
\begin{align*}
\sup_{t\geq 0}\varphi_{\lambda,\varepsilon}(t) & \leq
\left\lbrace 
\begin{array}{ll}
\displaystyle\frac{s}{N}\frac{S(s,N)^\frac{N}{2s}}{2 Q_M^\frac{N-2s}{2s}}+ O\left(\left(\frac{\varepsilon}{\varrho}\right)^{N-2s}\right)- C_1,%\lambda\varepsilon^\frac{(N-2s)(q+1)}{2}, 
& \text{ if } \displaystyle 2<q+1<\frac{N}{N-2s} \smallskip \\ 
\displaystyle\frac{s}{N}\frac{S(s,N)^\frac{N}{2s}}{2 Q_M^\frac{N-2s}{2s}}+ O\left(\left(\frac{\varepsilon}{\varrho}\right)^{N-2s}\right)- C_2,%\lambda\varepsilon^\frac{N}{2}|\log\varepsilon|, 
& \text{ if } \displaystyle q+1=\frac{N}{N-2s} \smallskip\\
\displaystyle\frac{s}{N}\frac{S(s,N)^\frac{N}{2s}}{2 Q_M^\frac{N-2s}{2s}}+ O\left(\left(\frac{\varepsilon}{\varrho}\right)^{N-2s}\right)- C_3,%\lambda\varepsilon^{N-\frac{(N-2s)(q+1)}{2}}, 
& \text{ if } \displaystyle\frac{N}{N-2s}<q+1\leq\frac{4s}{N-2s}\\
\end{array}
\right.\\
&< \frac{s}{N}\frac{S(s,N)^\frac{N}{2s}}{2 Q_M^\frac{N-2s}{2s}},	
\end{align*}
for sufficiently small $\varepsilon>0$ and sufficiently large $\lambda>0$. Let us justify the inequality for $q+1\in\left(2,\frac{N}{N-2s} \right)$, the other two being similar. In this case $\beta(\varepsilon)=\varepsilon^\frac{(N-2s)(q+1)}{2}$ so letting 
$$
\lambda = \varepsilon^{-\frac{(N-2s)(q+1)}{2}},
$$ 
and taking also account of \eqref{estimtlambdaepsq>1} and Lemma \ref{Lpnormextremals} we get
\begin{align*}
\frac{\lambda}{q+1}t_{\lambda,\varepsilon}^{q+1}\int_\Omega z^0_{\varrho,\varepsilon}(x,0)^{q+1}dx & \geq \frac{\lambda C_0 C_3}{(q+1)(1+\lambda\beta(\varepsilon))^\frac{q+1}{q-1}}\varepsilon^\frac{(N-2s)(q+1)}{2}=\frac{C_0 C_3}{(q+1) 2^\frac{q+1}{q-1}}= C_1,
\end{align*}
as desired. For the other two inequalities, it suffices to choose $$
\lambda=\varepsilon^{-\frac{N}{2}}|\log(\varepsilon)|^{-1} \quad\text{and}\quad \lambda=\varepsilon^{-N+\frac{(N-2s)(q+1)}{2}},
$$
respectively, to get the conclusion.

$(ii)$ If $N=2$ or $N=3$ and $\displaystyle q+1\in\left(\frac{4s}{N-2s},\frac{2N}{N-2s}\right)$ we get
\begin{align*}
\sup_{t\geq 0}\varphi_{\lambda,\varepsilon}(t) &\leq \frac{s}{N}\frac{S(s,N)^\frac{N}{2s}}{2 Q_M^\frac{N-2s}{2s}} + o\left( \varepsilon^{N-\frac{(N-2s)(q+1)}{2}}\right)-C\frac{\lambda}{q+1}(1+\lambda   \beta(\varepsilon))^{\frac{q+1}{1-q}}\int_\Omega z^0_{\varrho,\varepsilon}(x,0)^{q+1}dx\\
&\leq  \frac{s}{N}\frac{S(s,N)^\frac{N}{2s}}{2 Q_M^\frac{N-2s}{2s}} + o\left( \varepsilon^{N-\frac{(N-2s)(q+1)}{2}}\right) -C_\lambda\varepsilon^{N-\frac{(N-2s)(q+1)}{2}}\\
&<\frac{s}{N}\frac{S(s,N)^\frac{N}{2s}}{2 Q_M^\frac{N-2s}{2s}},
\end{align*}
for small $\varepsilon$ and for all $\lambda>0$, where $C_\lambda$ is a positive constant depending on $\lambda$. %\blue{(For each $\lambda>0$, we get $1+\lambda   \beta(\varepsilon)\rightarrow 1$ as $\varepsilon\rightarrow 0$, this claim is still true.)}

$(iii)$ For $N\geq 4$ and $\displaystyle q+1\in \left[\left. 2,\frac{2N}{N-2s}\right)\right. $ we distinguish two cases. If $q+1\in \left(2,\frac{2N}{N-2s}\right)$, arguing as in the previous case we deduce
\begin{align*}
	\sup_{t\geq 0}\varphi_{\lambda,\varepsilon}(t) &\leq  \frac{s}{N}\frac{S(s,N)^\frac{N}{2s}}{2 Q_M^\frac{N-2s}{2s}} + o\left( \varepsilon^{N-\frac{(N-2s)(q+1)}{2}}\right) -C_\lambda\varepsilon^{N-\frac{(N-2s)(q+1)}{2}}<\frac{s}{N}\frac{S(s,N)^\frac{N}{2s}}{2 Q_M^\frac{N-2s}{2s}},
\end{align*}
again for small $\varepsilon$ and every $\lambda>0$. %when $q+1\in \left(\left. 2,\frac{2N}{N-2s}\right)\right. $ 
When $q=1$, by using also \eqref{estimtlambdaepsq=1} we deduce
\begin{align*}
	\sup_{t\geq 0}\varphi_{\lambda,\varepsilon}(t) &\leq  \frac{s}{N}\frac{S(s,N)^\frac{N}{2s}}{2 Q_M^\frac{N-2s}{2s}} + o\left( \varepsilon^{N-\frac{(N-2s)(q+1)}{2}}\right) -C\frac{\lambda}{q+1}T_1^{q+1}\varepsilon^{N-\frac{(N-2s)(q+1)}{2}}\\
	&\leq  \frac{s}{N}\frac{S(s,N)^\frac{N}{2s}}{2 Q_M^\frac{N-2s}{2s}} + o\left( \varepsilon^{N-\frac{(N-2s)(q+1)}{2}}\right) -C_\lambda\varepsilon^{N-\frac{(N-2s)(q+1)}{2}}\\
	&<\frac{s}{N}\frac{S(s,N)^\frac{N}{2s}}{2 Q_M^\frac{N-2s}{2s}},
\end{align*}
for small $\varepsilon$ and for $\lambda\in (0,\lambda_1^s)$.
\end{proof}		

We are now in a position to prove the main results of this section.

\begin{theorem}\label{thmexistence}
Assume \eqref{c=}. Then, the following holds.
\begin{itemize}
\item If $(Q_1)$-$(i)$ is satisfied then there exists $\lambda^\star>0$ such that \eqref{problem} has a positive solution for all $\lambda\geq\lambda^\star$.
\item If $(Q_1)$-$(ii)$ is satisfied, \eqref{problem} admits a positive solution for any $\lambda>0$.
\item If $(Q_1)$-$(iii)$ holds, \eqref{problem} has a solution for all $\lambda\in(0,\lambda_1^s)$ if $q=1$; for all $\lambda>0$ if $q\in(1,\frac{N+2s}{N-2s})$.
\end{itemize}
\end{theorem}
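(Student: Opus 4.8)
The plan is to combine the compactness threshold from Proposition \ref{PScondition} with the energy estimate from Proposition \ref{supfiberingmap} via a mountain pass argument on the functional $J_\lambda$ over $\X$. First I would verify that $J_\lambda$ has the mountain pass geometry: since $q\in[1,2^*_s-1)$ and the critical term has the ``good'' sign, standard use of the trace inequality (Lemma \ref{lem:traceineq}) gives $J_\lambda(w)\geq \alpha_0>0$ on a small sphere $\|w\|_{\X}=\rho$ whenever $\lambda\in(0,\lambda_1^s)$ in the case $q=1$ (to absorb the quadratic perturbation using $\|w(\cdot,0)\|_{L^2(\Omega)}^2\leq\lambda_1^{-s}\|w\|_{\X}^2$) and for all $\lambda>0$ when $q>1$; and $J_\lambda(tz^0_{\varrho,\varepsilon})\to-\infty$ as $t\to+\infty$, so some large multiple $e:=T z^0_{\varrho,\varepsilon}$ gives $J_\lambda(e)<0$ with $\|e\|_{\X}>\rho$.

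Next, I would define the mountain pass level
\begin{equation*}
c_\lambda:=\inf_{\gamma\in\Gamma}\max_{t\in[0,1]}J_\lambda(\gamma(t)),\qquad \Gamma:=\{\gamma\in C([0,1],\X):\gamma(0)=0,\ \gamma(1)=e\},
\end{equation*}
and observe that, taking the straight-line path $t\mapsto tTz^0_{\varrho,\varepsilon}$ as a competitor,
\begin{equation*}
c_\lambda\leq \sup_{t\geq 0}J_\lambda(tz^0_{\varrho,\varepsilon}).
\end{equation*}
By Proposition \ref{supfiberingmap}, after fixing $\varepsilon>0$ small and choosing the point $a^0\in\SN$ and $\lambda$ according to the relevant subcase of $(Q_1)$ (large $\lambda$ in case $(i)$; any $\lambda>0$ in case $(ii)$ and in case $(iii)$ with $q>1$; $\lambda\in(0,\lambda_1^s)$ in case $(iii)$ with $q=1$), the right-hand side is strictly below $\frac{s}{N}S(s,N)^{N/2s}/(2Q_M^{N-2s/2s})$. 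Under hypothesis \eqref{c=} we have $S(\Sigma_{\mathcal D})=2^{-2s/N}S(s,N)$, hence $c^\star=\frac{s}{N}S(\Sigma_{\mathcal D})^{N/2s}/Q_M^{(N-2s)/2s}=\frac{s}{N}S(s,N)^{N/2s}/(2Q_M^{(N-2s)/2s})$, so in fact $0<c_\lambda<c^\star$. This is exactly the range in which Proposition \ref{PScondition} guarantees the $(PS)_{c_\lambda}$ condition (noting $\lambda<\lambda_1^s$ is respected precisely in the $q=1$ cases). The mountain pass theorem then yields a nontrivial critical point $w_\lambda\in\X$ of $J_\lambda$ with $J_\lambda(w_\lambda)=c_\lambda>0$.

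It remains to upgrade $w_\lambda$ to a \emph{positive} solution of the original problem \eqref{problem}. Here I would replace $J_\lambda$ by the truncated functional obtained by substituting $|w(x,0)|^{2^*_s}$ and $|w(x,0)|^{q+1}$ with $(w(x,0)^+)^{2^*_s}$ and $(w(x,0)^+)^{q+1}$; the same geometry, the same path estimate, and the same $(PS)$ analysis apply, and testing the critical point equation with $w_\lambda^-:=\min\{w_\lambda,0\}$ forces $w_\lambda^-\equiv 0$, whence $w_\lambda\geq 0$ and $w_\lambda\not\equiv 0$ since $c_\lambda>0$. A maximum principle for the degenerate extension operator (or the equivalent statement for the spectral fractional Laplacian with mixed boundary data) then gives $w_\lambda>0$ in $\mathcal C_\Omega$; finally $u_\lambda:=\Tr(w_\lambda)=w_\lambda(\cdot,0)\in H^s_{\SD}(\Omega)$ is, by the equivalence recalled in Section \ref{funcsett}, a positive weak solution of \eqref{problem}. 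I expect the main subtlety to be bookkeeping: checking that in \emph{each} of the subcases of $(Q_1)$ the value of $\lambda$ (and the smallness of $\varepsilon$) for which Proposition \ref{supfiberingmap} applies is compatible with the hypotheses of Proposition \ref{PScondition} — in particular that the constraint $\lambda\in(0,\lambda_1^s)$ only ever appears alongside $q=1$, which is indeed how the two propositions are phrased; the positivity/regularity step is standard.
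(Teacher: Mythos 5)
Your proposal follows essentially the same route as the paper: a mountain pass argument for $J_\lambda$ on $\X$, using the straight-line path through $z^0_{\varrho,\varepsilon}$ so that Proposition \ref{supfiberingmap} bounds the level strictly below $c^\star$ (which under \eqref{c=} equals $\frac{s}{N}S(s,N)^{N/2s}/(2Q_M^{(N-2s)/2s})$), and then Proposition \ref{PScondition} to recover compactness, with the same case bookkeeping in $\lambda$ and $q$. The only difference is the positivity step, where you truncate the nonlinearities and test with the negative part rather than invoking $J_\lambda(w)=J_\lambda(|w|)$ plus the strong maximum principle as the paper does; both are standard and your version is, if anything, the more careful one.
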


\begin{proof}
For any $w\in\X$ one has
$$
\int_\Omega \frac{Q(x)}{Q_M}|w(x,0)|^{2^*_s}dx \leq \int_\Omega|w(x,0)|^{2^*_s}dx
$$
and hence, by Sobolev embeddings,
$$
\int_\Omega Q(x)|w(x,0)|^{2^*_s} dx \leq Q_M S(s,N)^{-\frac{2^*_s}{2}}\left\| w\right\|_{\X}^{2^*_s}. 
$$
Then we get
\begin{align*}	
J_\lambda(w)&:=\frac{\kappa_s}{2}\int_{\mathcal{C}_{\Omega}}y^{1-2s}|\nabla w|^2dxdy-\frac{1}{2^*_s}\int_{\Omega}Q(x)|w(x,0)|^{2^*_s}dx-\frac{\lambda}{q+1}\int_{\Omega}|w(x,0)|^{q+1}dx\\
& \geq \frac{1}{2}\left\| w\right\|_{\X}^2 - \frac{Q_M}{2^*_s S(s,N)^\frac{2^*_s}{2}} \left\| w\right\|_{\X}^{2^*_s} - \frac{\lambda}{q+1}\int_{\Omega}|w(x,0)|^{q+1}dx\\ %\frac{\lambda C}{q+1}\left\|w \right\|_{\X}^{q+1}\\
& = 
\left\lbrace 
\begin{array}{ll}
\displaystyle\frac{1}{2}\left\| w\right\|_{\X}^2 + o\left(\left\|w \right\|_{\X}^2 \right) & \text { for all } \lambda>0 \text{ if } q\in(1,2^*_s-1),\smallskip\\
\displaystyle\frac{1}{2}\left( 1-\frac{\lambda}{\lambda_1^s}\right) \left\| w\right\|_{\X}^2 + o\left(\left\|w \right\|_{\X}^2 \right) & \text { for all } \lambda\in(0,\lambda_1^s) \text{ if } q=1.\\ 
\end{array}
\right. 
\end{align*}
In both cases, $w=0$ turns out to be a local minimum of $J_\lambda$ in $\X$.
%We know that
%$$
%S(\SD)\leq 2^{-\frac{2s}{N}}S(s,N),
%$$
%so we are led to distinguish two cases, according to whether the previous relation is an equality or inequality.

%$(a)$: $S(\SD) = 2^{-\frac{2s}{N}}S(s,N)$. 
Since 
$\varphi_{\lambda,\varepsilon}(t)= J_\lambda(t z^0_{\varrho,\varepsilon})\to -\infty$ as $t\to +\infty$ for all $\lambda>0$ and for all $q\in[1,2^*_s-1)$, there exists $\bar t>0$ such that $\varphi_{\lambda,\varepsilon}(\bar t)<0$. Next, let us take $\bar w:=\bar t z^0_{\varrho,\varepsilon}$ and
\[c :=\inf_{\gamma\in\Gamma}\max_{t\in[0,1]}J_\lambda(\gamma(t)),\]
where $\Gamma :=\left\lbrace \gamma\in C^0([0,1],\X): \gamma(0)=0, \ \gamma(1)=\bar w\right\rbrace$. Because of Proposition \ref{supfiberingmap},
$$
c<\frac{s}{N}\frac{S(s,N)^\frac{N}{2s}}{2 Q_M^\frac{N-2s}{2s}}=\frac{s}{N}\frac{S(\SD)^\frac{N}{2s}}{Q_M^\frac{N-2s}{2s}}=c^\star,
$$
for sufficiently large $\lambda$ in the case $(Q_1)$-$(i)$; for every $\lambda>0$ if either $(Q_1)$-$(ii)$ or $(Q_1)$-$(iii)$ with $q\neq 1$ holds; for every $\lambda\in(0,\lambda_1^s)$ under $(Q_1)$-$(iii)$ with $q=1$. So, taking also account of Proposition \ref{cupperbound}, by the mountain pass theorem $c$ is a critical value for $J_\lambda$, i.e., there exists $w_0\in\X\setminus\{0\}$ such that $J_\lambda(w_0)=c$ and $J'_\lambda(w_0)=0$, in the following cases:
\begin{itemize}
	\item if $(Q_1)$-$(i)$ is satisfied, for sufficiently large $\lambda>0$;
	\item if $(Q_1)$-$(ii)$ is satisfied, for any $\lambda>0$;
	\item if $(Q_1)$-$(iii)$ is satisfied, for all $\lambda\in(0,\lambda_1^s)$ if $q=1$; for all $\lambda>0$ if $q\in(1,\frac{N+2s}{N-2s})$. 
\end{itemize}
Finally, since $J_\lambda(w)=J_\lambda(|w|)$, $w_0$ is non-negative, due to the strong maximum principle, $w_0>0$ a.e. in $\mathcal{C}_\Omega$. This completes the proof.
\end{proof}

In the case of strict inequality in \eqref{relatsobconst}, we get a result similar to Theorem \ref{thmexistence}, but the linear case $q=1$ is ruled out.

\begin{theorem}\label{thmexistence2}
Assume \eqref{c<}. Then, the following holds.
\begin{itemize}
\item If $(Q_1)$-$(i)$ is satisfied then there exists $\lambda^\star>0$ such that \eqref{problem} has a positive solution for all $\lambda\geq\lambda^\star$. 
\item If $(Q_1)$-$(ii)$ is satisfied, \eqref{problem} admits a positive solution for any $\lambda>0$. 
\item If $(Q_1)$-$(iii)$ holds, \eqref{problem} has a solution for all  $\lambda>0$ provided that $q\in(1,\frac{N+2s}{N-2s})$.
\end{itemize}
\end{theorem}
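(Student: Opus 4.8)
The plan is to follow the same variational scheme as in the proof of Theorem \ref{thmexistence}, carefully tracking where the hypothesis \eqref{c=} was actually used and checking that under \eqref{c<} the argument persists, except for the linear case. First I would record the two structural facts that hold regardless of whether \eqref{c<} or \eqref{c=} is in force: by Proposition \ref{PScondition}, $J_\lambda$ satisfies the $(PS)_c$ condition below the universal threshold $c^\star=\frac{s}{N}S(\Sigma_{\mathcal D})^{N/2s}Q_M^{-(N-2s)/2s}$ in case $(ii)$ (i.e.\ $q\in(1,2^*_s-1)$, any $\lambda>0$), and by the computation at the start of the proof of Theorem \ref{thmexistence} the origin $w=0$ is a strict local minimum of $J_\lambda$ for every $\lambda>0$ when $q>1$ (for $q=1$ this requires $\lambda<\lambda_1^s$, but that case is excluded here anyway). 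Note that neither of these two facts used \eqref{c=}: Proposition \ref{PScondition} is stated for general $S(\Sigma_{\mathcal D})$, and the mountain-pass geometry near $0$ only uses the global Sobolev inequality \eqref{sobolev} with constant $S(s,N)$.

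Next I would set up the mountain pass exactly as before. Fix $i=0$ and the truncated instanton $z^0_{\varrho,\varepsilon}$ centered at the maximum point $a^0\in\Sigma_{\mathcal N}$ of $Q$. Since $\varphi_{\lambda,\varepsilon}(t)=J_\lambda(tz^0_{\varrho,\varepsilon})\to-\infty$ as $t\to+\infty$, pick $\bar t$ with $\varphi_{\lambda,\varepsilon}(\bar t)<0$, put $\bar w=\bar t\,z^0_{\varrho,\varepsilon}$, and define $c=\inf_{\gamma\in\Gamma}\max_{t\in[0,1]}J_\lambda(\gamma(t))$ over paths joining $0$ to $\bar w$. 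The crucial point is the energy estimate: by Proposition \ref{supfiberingmap}, under $(Q_1)$-$(ii)$ for all $\lambda>0$, and under $(Q_1)$-$(iii)$ with $q\in(1,\frac{N+2s}{N-2s})$ for all $\lambda>0$, one has
\[
c\le \sup_{t\ge 0}\varphi_{\lambda,\varepsilon}(t)<\frac{s}{N}\frac{S(s,N)^{N/2s}}{2\,Q_M^{(N-2s)/2s}},
\]
and for case $(Q_1)$-$(i)$ the same holds for all $\lambda$ large enough. Here, however, we are under \eqref{c<}, so unlike in Theorem \ref{thmexistence} the right-hand side is strictly larger than $c^\star$: indeed $2^{-\frac{2s}{N}}S(s,N)>S(\Sigma_{\mathcal D})$ forces $\frac{s}{N}\frac{S(s,N)^{N/2s}}{2Q_M^{(N-2s)/2s}}=\frac{s}{N}\frac{(2^{-2s/N}S(s,N))^{N/2s}}{Q_M^{(N-2s)/2s}}>\frac{s}{N}\frac{S(\Sigma_{\mathcal D})^{N/2s}}{Q_M^{(N-2s)/2s}}=c^\star$. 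So the bound from Proposition \ref{supfiberingmap} is \emph{not} directly enough to land below $c^\star$. The main obstacle, therefore, is to sharpen the upper bound: one must use the instanton $z^0_{\varrho,\varepsilon}$ not as a fixed function but observe that, since $S(\Sigma_{\mathcal D})$ is attained under \eqref{c<} (by \cite[Theorem 2.9]{colort2019the}), we should instead build the mountain-pass path using a truncation of a genuine extremal $W$ for $S(\Sigma_{\mathcal D})$ — or equivalently redo Lemmas \ref{Lpnormextremals}, \ref{l5} and \ref{propvarphilambda} with $W$ in place of the Aubin--Talenti $w_{\varepsilon,0}$ — so that the leading term of $\sup_t\varphi_{\lambda,\varepsilon}(t)$ becomes $\frac{s}{N}\frac{S(\Sigma_{\mathcal D})^{N/2s}}{Q_M^{(N-2s)/2s}}=c^\star$ rather than the larger quantity above, while the subcritical perturbation still contributes a strictly negative correction of order $\beta(\varepsilon)$; this is precisely the point flagged in the introduction, namely that under \eqref{c<} one has \emph{no} explicit expression for the extremals, so the $o(\varepsilon^\alpha)$ and $O((\varepsilon/\varrho)^{N-2s})$ bookkeeping must be replaced by softer estimates, and this is why $q=1$ has to be dropped — the gain $\beta(\varepsilon)$ from the linear term is then too weak to beat the remainder terms.

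Finally, once the strict inequality $c<c^\star$ is secured for the stated ranges of $\lambda$ and $q$, the conclusion is immediate: $J_\lambda$ has mountain-pass geometry (strict local min at $0$, a point $\bar w$ with lower energy, and $c$ positive by the local-min estimate), and satisfies $(PS)_c$ by Proposition \ref{PScondition}$(ii)$, so the mountain pass theorem yields $w_0\in\X\setminus\{0\}$ with $J_\lambda(w_0)=c$ and $J'_\lambda(w_0)=0$. Since $J_\lambda(w)=J_\lambda(|w|)$ we may take $w_0\ge 0$, and the strong maximum principle upgrades this to $w_0>0$ in $\mathcal C_\Omega$; its trace $\Tr(w_0)$ is then a positive solution of \eqref{problem}. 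The only genuinely new work compared with Theorem \ref{thmexistence} is the sharpened energy estimate sketched above, and the observation that it degrades for $q=1$.
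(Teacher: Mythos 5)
Your high-level idea is the right one — under \eqref{c<} the instanton bound of Proposition \ref{supfiberingmap} only gives a level below $\tfrac{s}{N}\tfrac{S(s,N)^{N/2s}}{2Q_M^{(N-2s)/(2s)}}$, which now sits strictly \emph{above} $c^\star$, so one must switch to an extremal of $S(\SD)$, which exists by \cite[Theorem 2.9]{colort2019the}. This is indeed what the paper does. But the step you leave as a sketch is precisely the heart of the proof, and the mechanism you propose for it does not work. The extremal $\tilde w=E_s[\tilde u]$ is a single fixed profile: there is no concentration parameter $\varepsilon$, no truncation radius $\varrho$, and no scale-invariant family of extremals for $S(\SD)$ (rescaling or truncating $\tilde w$ near $a^0$ destroys both the boundary conditions and its extremality). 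Consequently there is no analogue of Lemmas \ref{Lpnormextremals} and \ref{l5} with ``$W$ in place of $w_{\varepsilon,0}$,'' no quantity $\beta(\varepsilon)$, and no $o(\varepsilon^\alpha)$ remainders to beat. The real obstruction you never address is the deficit term $\int_\Omega (Q_M-Q(x))|\tilde w(x,0)|^{2^*_s}\,dx$: since $\tilde w$ need not concentrate at a maximum point of $Q$, testing $J_\lambda$ along $t\tilde w$ yields a fiber maximum whose critical part is $\tfrac{s}{N}\bigl(S(\SD)/(\int_\Omega Q|\tilde w(\cdot,0)|^{2^*_s}dx)^{2/2^*_s}\bigr)^{N/2s}$, in general strictly larger than $c^\star$ by an amount of order one, not of order $\beta(\varepsilon)$. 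The paper resolves this by comparing $J_\lambda$ with the constant-weight functional $J_{\lambda,Q_M}$ along the fiber $t\tilde w$ (whose maximum is exactly $c^\star$), proving a uniform bound $t_\lambda\le T_2$ as in Lemma \ref{propvarphilambda}, and then showing that the fixed positive deficit is either already dominated by the term $-\tfrac{\lambda}{q+1}t_\lambda^{q+1}\int_\Omega|\tilde w(x,0)|^{q+1}dx$ or is dominated once $\lambda$ is taken suitably large; only after that does the mountain-pass machinery of Theorem \ref{thmexistence} apply with $\zeta_\lambda$ in place of $\varphi_{\lambda,\varepsilon}$.

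Relatedly, your explanation for dropping $q=1$ (``the gain $\beta(\varepsilon)$ is too weak'') is not the actual reason and is inconsistent with your own construction, since no $\beta(\varepsilon)$ appears once the fixed extremal is used. The case $q=1$ is excluded because compensating the order-one deficit forces largeness of $\lambda$ (or at least prevents the argument from working for every small $\lambda$), whereas for $q=1$ the Palais--Smale condition and the mountain-pass geometry are only available for $\lambda\in(0,\lambda_1^s)$. So the correct statement of your plan would be: keep your first and last paragraphs (PS condition below $c^\star$, local minimum at $0$, mountain pass, positivity), but replace the middle step by the explicit comparison $J_\lambda(t\tilde w)=J_{\lambda,Q_M}(t\tilde w)+\tfrac{1}{2^*_s}t^{2^*_s}\int_\Omega(Q_M-Q)|\tilde w(x,0)|^{2^*_s}dx$ together with the uniform bound $t_\lambda\le T_2$ and the dichotomy on the sign of the resulting bracket. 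As written, the proposal has a genuine gap at the energy estimate $\sup_{t\ge0}J_\lambda(t\tilde w)<c^\star$.
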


\begin{proof}
By \cite[Theorem 1.1--(iii)]{colort2019the}, the constant $S(\Sigma_{\mathcal{D}})$ is attained at some positive function $\tilde{u}\in H_{\Sigma_{\mathcal{D}}}^s(\Omega)$, namely, 
\[%\begin{equation}\label{sv}
\|\tilde{u}\|_{H_{\Sigma_{\mathcal{D}}}^s(\Omega)}^2=S(\Sigma_{\mathcal{D}}) \|\tilde{u}\|_{L^{2_s^*}(\Omega)}^2,
\]%\end{equation}
so the critical problem 
\begin{equation*}
	\left\{
	\begin{array}{ll} 
(-\Delta)^s u = u^{2^*_s-1} & \text{ in } \Omega,\smallskip\\
u>0 & \text{ in } \Omega,\smallskip\\
 B(u)=0 & \text{ on } \partial\Omega
	\end{array}
	\right.
\end{equation*}
has a solution $u=[S(\Sigma_{\mathcal{D}})]^{\frac{1}{2_s^*-2}}\tilde{u}$. Without loss of generality let us take $\tilde{u}$ such that $\|\tilde{u}\|_{L^{2_s^*}(\Omega)}^2=1$ and denote by $\tilde{w}=E_s[\tilde{u}]$ its $s$-harmonic extension, so that
\[\|\tilde{w}\|_{\mathcal{X}_{\Sigma_{\mathcal{D}}}^s(\mathcal{C}_{\Omega})}^2=S(\Sigma_{\mathcal{D}}).\]
As before, for every $\lambda\geq 0$, let us consider the fibering map
$$
\psi_\lambda(t):=J_{\lambda,Q_M}(t \tilde{w}), \quad\text{for all } t\geq 0,
$$ 
where 
\begin{equation*}
	J_{\lambda,Q_M}(w):=\frac{\kappa_s}{2}\int_{\mathcal{C}_{\Omega}}y^{1-2s}|\nabla w|^2dxdy-\frac{1}{2^*_s}\int_{\Omega}Q_M |w(x,0)|^{2^*_s}dx-\frac{\lambda}{q+1}\int_{\Omega}|w(x,0)|^{q+1}dx.
\end{equation*}

By the same arguments as Lemma \ref{propvarphilambda}, %Since $\psi_\lambda(t)\to 0$ as $t\to 0^+$, $\psi_\lambda(t)\to -\infty$ as $t\to +\infty$, 
$\psi_\lambda$ has a unique maximizer $t_{\lambda}>0$ and $t_{\lambda}\leq T_2$ for some $T_2>0$ independent of $\lambda$. 
In particular,
\begin{equation*}
 t_{0}=\left( \frac{S(\Sigma_{\mathcal{D}}) }{\displaystyle\int_\Omega Q_M|\tilde{w}(x,0)|^{2^*_s}dx}\right)^\frac{1}{2^*_s-2}
 \qquad\text{and}\qquad
\psi_0(t_0)= \frac{s}{N}\left(\frac{S(\Sigma_{\mathcal{D}})}{ Q_M^{\frac{2}{2_s^*}}}\right)^{\frac{N}{2s}}. 
\end{equation*}
Now, let
\begin{equation*}
	 \zeta_\lambda(t):= J_{\lambda}(t \tilde{w}), \quad\text{for all } t\geq 0.
\end{equation*}
One has
\begin{align*}%\label{estimzetalambda}
\sup_{t\geq 0}	\zeta_\lambda(t)  & =\sup_{t\geq 0} \left[	\psi_\lambda(t)+ \frac{1}{2^*_s}t^{2^*_s}\int_{\Omega}(Q_M-Q(x))|\tilde{w}(x,0)|^{2^*_s}dx\right]\\ 
 &\leq \psi_0 (t_{0}) +\frac{1}{2^*_s}t_{\lambda}^{2^*_s}\int_{\Omega}(Q_M-Q(x))|\tilde{w}(x,0)|^{2^*_s}dx- \frac{\lambda}{q+1}t_{\lambda}^{q+1}\int_\Omega |\tilde{w}(x,0)|^{q+1}dx\\
 & =\frac{s}{N}\left(\frac{S(\Sigma_{\mathcal{D}})}{ Q_M^{\frac{2}{2_s^*}}}\right)^{\frac{N}{2s}}+\frac{1}{2^*_s}t_{\lambda}^{2^*_s}\int_{\Omega}(Q_M-Q(x))|\tilde{w}(x,0)|^{2^*_s}dx- \frac{\lambda}{q+1}t_{\lambda}^{q+1}\int_\Omega |\tilde{w}(x,0)|^{q+1}dx\\
&=\frac{s}{N}\left(\frac{S(\Sigma_{\mathcal{D}})}{ Q_M^{\frac{2}{2_s^*}}}\right)^{\frac{N}{2s}}+t_{\lambda}^{q-1}\left[\frac{1}{2^*_s}t_{\lambda}^{2^*_s-q+1}\int_{\Omega}(Q_M-Q(x))|\tilde{w}(x,0)|^{2^*_s}dx\right. \\ 
&\;\;\;\left. -\frac{\lambda}{q+1} \int_\Omega |\tilde{w}(x,0)|^{q+1}dx\right].
\end{align*}
If 
$$
\frac{1}{2^*_s}t_{\lambda}^{2^*_s-q+1}\int_{\Omega}(Q_M-Q(x))|\tilde{w}(x,0)|^{2^*_s}dx- \frac{\lambda}{q+1} \int_\Omega |\tilde{w}(x,0)|^{q+1}dx<0,
$$ 
we get 
$
\sup_{t\geq 0}\zeta_\lambda(t)    
< c^\star%\frac{s}{N}\left(\frac{S(\Sigma_{\mathcal{D}})}{ Q_M^{\frac{2}{2_s^*}}}\right)^{\frac{N}{2s}}
$
and the result follows. Otherwise, one has

\begin{equation*}%\label{estimzetalambda}
	\begin{split}
\sup_{t\geq 0}	\zeta_\lambda(t)    
\leq &\frac{s}{N}\left(\frac{S(\Sigma_{\mathcal{D}})}{ Q_M^{\frac{2}{2_s^*}}}\right)^{\frac{N}{2s}}+T_2^{q-1}\left[\frac{1}{2^*_s}T_{2}^{2^*_s-q+1}\int_{\Omega}(Q_M-Q(x))|\tilde{w}(x,0)|^{2^*_s}dx\right.\\&-\left. \frac{\lambda}{q+1} \int_\Omega |\tilde{w}(x,0)|^{q+1}dx\right]\\
<&\frac{s}{N}\left(\frac{S(\Sigma_{\mathcal{D}})}{ Q_M^{\frac{2}{2_s^*}}}\right)^{\frac{N}{2s}},
\end{split}
\end{equation*}
for  large $\lambda>0$. Then the conclusions of the Theorem can be drawn by the very same arguments as Theorem \ref{thmexistence}, simply replacing $\varphi_\lambda$ by $\zeta_\lambda$.
\end{proof}
%%%%%%%%%%%%%%%%%%%%%%%%%%%
\section{Multiplicity of solutions}\label{sec:multiplicity}
As pointed out in the Introduction, in the whole section we will make the standing assumption that \eqref{c=} holds. Let 
$$
N_\lambda:=\left\lbrace w\in \X\setminus\{0\}: \left\| w\right\|_{\X}^2-\int_\Omega Q(x)|w(x,0)|^{2^*_s}dx -\lambda\int_\Omega |w(x,0)|^{q+1}dx=0 \right\rbrace 
$$
be the Nehari manifold corresponding to $J_\lambda$. Having in mind to minimize $J_\lambda$ on suitable submanifolds of $N_\lambda$, we introduce first the barycenter map $\beta:\X\setminus\{0\}\to\R^N$ by
$$
\beta(w):=\frac{\displaystyle\int_\Omega x|w(x,0)|^{2^*_s}dx}{\displaystyle\int_\Omega |w(x,0)|^{2^*_s}dx}, \quad w\in \X\setminus\{0\}.
$$
By $(Q_2)$, there exists $r_0>0$ such that $\overline{B_{r_0}(a^i)}\cap\overline{B_{r_0}(a^j)}=\emptyset$ for all $i,j=1,\ldots,k$, $i\neq j$. Now, define
\[%\begin{equation}\label{submanifolds}
	\begin{split}
	N_{\lambda,i} &:=\left\lbrace w\in N_\lambda:|\beta(w)-a^i|<r_0\right\rbrace,\\
	M_{\lambda,i} &:= \left\lbrace w\in N_\lambda:|\beta(w)-a^i|=r_0\right\rbrace.
	\end{split}
\]%\end{equation}
The next results collects some basic properties of $N_\lambda$ and ${J_\lambda}_{|N_\lambda}$, whose proofs are similar to those of Lemma 3.1, 3.2 and 3.3 of \cite{lialiuzhatan2016existence}.

\begin{proposition}\label{propertiesNlambda}
The following facts hold: 
\begin{itemize}
	\item[$(i)$] $N_\lambda\neq\emptyset$ for all $\lambda\in(0,\lambda_1^s)$ if $q=1$; for all $\lambda>0$ if $q\in(1,2^*_s-1)$. In particular, for all $w\in\X\setminus\{0\}$, there exists in both cases a unique $t_w>0$, critical point of the map $t\mapsto J_\lambda(tw)$, such that $t_w w\in N_\lambda$;
	\item[$(ii)$] $J_\lambda$ is coercive and bounded below on $N_\lambda\neq\emptyset$ for all $\lambda\in(0,\lambda_1^s)$ if $q=1$; for all $\lambda>0$ if $q\in(1,2^*_s-1)$;
	\item[$(iii)$] $N_\lambda$ is a natural constraint for $J_\lambda$, i.e., any local extremal of $J_\lambda$ on $N_\lambda$ is a critical point on the whole $\X$.
\end{itemize}
\end{proposition}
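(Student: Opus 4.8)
The argument rests on the standard fibering analysis for a Nehari manifold attached to a concave-convex functional, adapted to the extended functional $J_\lambda$ on $\X$. The main tool is the reduced fibering map $h_w(t):=J_\lambda(tw)$ for $t>0$ and a fixed $w\in\X\setminus\{0\}$; its stationary points on $(0,+\infty)$ are precisely the values of $t$ for which $tw\in N_\lambda$, since $t\,h_w'(t)=\langle J_\lambda'(tw),tw\rangle$. Writing $A:=\|w\|_{\X}^2>0$, $B:=\int_\Omega Q(x)|w(x,0)|^{2^*_s}dx>0$ and $D:=\int_\Omega|w(x,0)|^{q+1}dx>0$, the equation $h_w'(t)=0$ reads $A\,t=B\,t^{2^*_s-1}+\lambda D\,t^{q}$.

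\emph{Step 1: proof of $(i)$.} When $q>1$ the right-hand side $\phi(t):=B\,t^{2^*_s-2}+\lambda D\,t^{q-1}$ is strictly increasing from $0$ to $+\infty$ on $(0,+\infty)$, so the equation $A=\phi(t)$ has a unique root $t_w>0$; moreover $h_w'(t)>0$ on $(0,t_w)$ and $h_w'(t)<0$ afterwards, so $t_w$ maximizes $h_w$ and $t_w w\in N_\lambda$, and $N_\lambda\neq\emptyset$ for every $\lambda>0$. When $q=1$ the fibering equation becomes $(A-\lambda D)\,t=B\,t^{2^*_s-1}$; here I use Lemma \ref{lem:traceineq} and the variational characterization of $\lambda_1^s$ (equivalently $\|w(\cdot,0)\|_{L^2(\Omega)}^2\le (\lambda_1^s)^{-1}\|w\|_{\X}^2$) to guarantee $A-\lambda D>0$ whenever $0<\lambda<\lambda_1^s$, which again yields a unique $t_w=\big((A-\lambda D)/B\big)^{1/(2^*_s-2)}>0$ with $t_w w\in N_\lambda$. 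This proves $(i)$ in both regimes.

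\emph{Step 2: proof of $(ii)$.} For $w\in N_\lambda$ one has the identity $\|w\|_{\X}^2=B+\lambda D$, hence
\begin{align*}
J_\lambda(w)&=\Big(\tfrac12-\tfrac{1}{2^*_s}\Big)\|w\|_{\X}^2-\lambda\Big(\tfrac{1}{q+1}-\tfrac{1}{2^*_s}\Big)\int_\Omega|w(x,0)|^{q+1}dx\\
&=\Big(\tfrac12-\tfrac{1}{q+1}\Big)\|w\|_{\X}^2+\Big(\tfrac{1}{q+1}-\tfrac{1}{2^*_s}\Big)\int_\Omega Q(x)|w(x,0)|^{2^*_s}dx .
\end{align*}
When $q>1$ both coefficients in the second expression are positive and $Q>0$, so $J_\lambda(w)\ge\big(\tfrac12-\tfrac{1}{q+1}\big)\|w\|_{\X}^2$, giving coercivity and a lower bound on $N_\lambda$. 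When $q=1$ I use the first expression together with the embedding bound $\int_\Omega|w(x,0)|^{2}dx\le (\lambda_1^s)^{-1}\|w\|_{\X}^2$ to get $J_\lambda(w)\ge\frac{s}{N}\big(1-\tfrac{\lambda}{\lambda_1^s}\big)\|w\|_{\X}^2$, which is coercive precisely for $\lambda\in(0,\lambda_1^s)$; boundedness below follows. This proves $(ii)$.

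\emph{Step 3: proof of $(iii)$.} Set $G(w):=\|w\|_{\X}^2-\int_\Omega Q(x)|w(x,0)|^{2^*_s}dx-\lambda\int_\Omega|w(x,0)|^{q+1}dx$, so $N_\lambda=G^{-1}(0)\setminus\{0\}$. For $w\in N_\lambda$,
\[
\langle G'(w),w\rangle=2\|w\|_{\X}^2-2^*_s\int_\Omega Q(x)|w(x,0)|^{2^*_s}dx-(q+1)\lambda\int_\Omega|w(x,0)|^{q+1}dx=(2-2^*_s)\|w\|_{\X}^2+(2^*_s-q-1)\lambda\int_\Omega|w(x,0)|^{q+1}dx,
\]
using $G(w)=0$ to eliminate the critical term. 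Since $2-2^*_s<0$ and $2^*_s-q-1>0$, one shows $\langle G'(w),w\rangle<0$: indeed if it were $\ge 0$ then $(2^*_s-2)\|w\|_{\X}^2\le(2^*_s-q-1)\lambda\int_\Omega|w(x,0)|^{q+1}dx$, which combined with $G(w)=0$ (i.e. $\|w\|_{\X}^2\ge\lambda\int_\Omega|w(x,0)|^{q+1}dx$ when $q>1$, and the sharp inequality from $\lambda<\lambda_1^s$ when $q=1$) forces $\int_\Omega Q(x)|w(x,0)|^{2^*_s}dx\le 0$, impossible since $w\not\equiv0$ and $Q>0$. Hence $G'(w)\neq0$ on $N_\lambda$, so $N_\lambda$ is a $C^1$ manifold and the Lagrange multiplier rule applies: a constrained critical point $w$ satisfies $J_\lambda'(w)=\mu G'(w)$ for some $\mu\in\R$; testing with $w$ gives $0=\langle J_\lambda'(w),w\rangle=\mu\langle G'(w),w\rangle$, and since $\langle G'(w),w\rangle\neq0$ we get $\mu=0$, i.e. $J_\lambda'(w)=0$ in $\X$. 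This proves $(iii)$.

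\emph{Main obstacle.} The only delicate point is the sign analysis in Step 3 (and the companion positivity $A-\lambda D>0$ in Steps 1--2 for the linear case), where the threshold $\lambda_1^s$ enters in an essential way: one must carefully combine the Nehari identity $G(w)=0$ with the Poincaré-type bound coming from $\lambda_1^s$ to rule out degenerate points. Everything else is a routine adaptation of the corresponding arguments in \cite[Lemmas 3.1--3.3]{lialiuzhatan2016existence} to the extended variational framework.
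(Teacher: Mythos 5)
Your proof is correct and follows precisely the standard fibering-map/Nehari arguments that the paper itself invokes by pointing to Lemmas 3.1--3.3 of \cite{lialiuzhatan2016existence} rather than writing out: unique intersection of each ray with $N_\lambda$, the two Nehari identities giving coercivity, and the Lagrange-multiplier argument based on $\langle G'(w),w\rangle<0$. Two small points worth tightening: the bound $\|w(\cdot,0)\|_{L^2(\Omega)}^2\le(\lambda_1^s)^{-1}\|w\|_{\X}^2$ follows from the isometry \eqref{norma2} together with the energy-minimality of the $s$-harmonic extension of the trace and the spectral definition of the $H^s_{\Sigma_{\mathcal D}}(\Omega)$-norm (Lemma \ref{lem:traceineq} alone only gives the $L^{2^*_s}$ bound), and your positivity of $B$ and $D$ requires $w(\cdot,0)\not\equiv 0$, which is automatic for $w\in N_\lambda$ (so (ii)--(iii) are unaffected) and is implicitly assumed in the statement of $(i)$ exactly as in the paper.
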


\begin{proposition}\label{betaconvergence}
For every $i=1,\ldots,k$, let $t_{z_{\varrho,\varepsilon}^i}>0$ be such that $t_{z_{\varrho,\varepsilon}^i}z_{\varrho,\varepsilon}^i\in N_\lambda$. Then 
\begin{equation}\label{betaconv}
\beta(t_{z_{\varrho,\varepsilon}^i}z_{\varrho,\varepsilon}^i)\to a^i \quad \text{as } \varepsilon\to 0^+,
\end{equation} 
and hence there exists $\bar\varepsilon>0$ such that $\beta(t_{z_{\varrho,\varepsilon}^i}z_{\varrho,\varepsilon}^i)\in B_{r_0}(a^i)$ for every $\varepsilon\in(0,\bar\varepsilon)$ and $i=1,\ldots,k$.
\end{proposition}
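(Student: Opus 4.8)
The plan is to establish \eqref{betaconv} by a direct asymptotic analysis of the barycenter map applied to the rescaled truncated instantons, and then to deduce the openness/membership conclusion by continuity and compactness. First I would record that by Proposition \ref{propertiesNlambda}-$(i)$ the scalars $t_{z_{\varrho,\varepsilon}^i}>0$ are well defined and, arguing exactly as in Lemma \ref{propvarphilambda} (the functions $z^i_{\varrho,\varepsilon}$ satisfy the same estimates \eqref{estimtruncextremals} as $z^0_{\varrho,\varepsilon}$, since $a^i\in\SN$ is a regular boundary point for each $i$), one has $0<T_1\le t_{z_{\varrho,\varepsilon}^i}\le T_2$ uniformly in small $\varepsilon$. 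Since $\beta$ is $0$-homogeneous in the sense that $\beta(tw)=\beta(w)$ for every $t>0$, the scalar $t_{z_{\varrho,\varepsilon}^i}$ drops out and it suffices to show $\beta(z^i_{\varrho,\varepsilon})\to a^i$ as $\varepsilon\to 0^+$.

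Next I would compute $\beta(z^i_{\varrho,\varepsilon})$ directly. Writing $x=a^i+\varepsilon\sigma$ and using the explicit form of $u_{\varepsilon,i}$ together with the cut-off $\phi_{\varrho,i}$, both the numerator $\int_\Omega x\, z^i_{\varrho,\varepsilon}(x,0)^{2^*_s}dx$ and the denominator $\int_\Omega z^i_{\varrho,\varepsilon}(x,0)^{2^*_s}dx$ rescale so that, after the change of variables, the mass concentrates on the unit scale around $a^i$. Concretely, $\int_\Omega z^i_{\varrho,\varepsilon}(x,0)^{2^*_s}dx=\tfrac12\|u_{\varepsilon,i}\|_{L^{2^*_s}(\R^N)}^{2^*_s}+O((\varepsilon/\varrho)^N)$ by \eqref{estimtruncextremals}, which is bounded away from $0$, while
\[
\int_\Omega (x-a^i)\, z^i_{\varrho,\varepsilon}(x,0)^{2^*_s}dx = \varepsilon \int_{(\Omega-a^i)/\varepsilon} \sigma\,\phi_0\!\left(\tfrac{\varepsilon|\sigma|}{\varrho}\right)^{2^*_s}\frac{d\sigma}{(1+|\sigma|^2)^{N}}\to 0,
\]
because the integral converges (the integrand decays like $|\sigma|^{-2N+1}$ at infinity, which is integrable since $N>2s>1$ forces $2N-1>N$) and is multiplied by $\varepsilon$. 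Hence $\beta(z^i_{\varrho,\varepsilon})-a^i=\dfrac{\int_\Omega(x-a^i)z^i_{\varrho,\varepsilon}(x,0)^{2^*_s}dx}{\int_\Omega z^i_{\varrho,\varepsilon}(x,0)^{2^*_s}dx}\to 0$, which is \eqref{betaconv}.

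Finally, the last assertion is immediate from \eqref{betaconv}: for each fixed $i$ there is $\bar\varepsilon_i>0$ with $|\beta(t_{z_{\varrho,\varepsilon}^i}z_{\varrho,\varepsilon}^i)-a^i|<r_0$ for all $\varepsilon\in(0,\bar\varepsilon_i)$, and one takes $\bar\varepsilon:=\min_{1\le i\le k}\bar\varepsilon_i>0$, which works for all $i=1,\dots,k$ simultaneously since $k$ is finite. The main obstacle is the rescaling bookkeeping for the numerator: one must check that the cut-off does not destroy the cancellation, i.e.\ that the odd-symmetry of $\sigma\mapsto\sigma/(1+|\sigma|^2)^N$ together with the radial, hence even, cut-off $\phi_0$ does not by itself give a vanishing integral (the domain $(\Omega-a^i)/\varepsilon$ is only a half-space-like region near $a^i$, so symmetry is broken), and therefore that one genuinely needs the extra factor $\varepsilon$ and the integrability of $|\sigma|/(1+|\sigma|^2)^N$ on that region; the decay exponent $2N-1>N$ is exactly what makes this work, and this is where the hypothesis $N>2s$ enters. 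The rest is routine, following Lemma 3.4 of \cite{lialiuzhatan2016existence} with $z^i_{\varrho,\varepsilon}$ replacing the interior-concentrated instantons.
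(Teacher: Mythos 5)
Your argument is correct and follows essentially the same route as the paper: exploit that $\beta(tw)=\beta(w)$ so the Nehari scalar drops out, rescale $x=a^i+\varepsilon\sigma$ in the barycenter integrals, and use the concentration of $z^i_{\varrho,\varepsilon}(\cdot,0)^{2^*_s}$ at scale $\varepsilon$ around $a^i$ (with \eqref{estimtruncextremals} keeping the denominator bounded away from zero) to conclude $\beta(z^i_{\varrho,\varepsilon})\to a^i$. Your quantitative version, bounding the first moment by $\varepsilon$ times the convergent integral of $|\sigma|(1+|\sigma|^2)^{-N}$, is just a slightly more explicit form of the paper's passage to the limit in the rescaled ratio, so no gap.
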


\begin{proof}
One has
\begin{align*}
\beta(t_{z_{\varrho,\varepsilon}^i}z_{\varrho,\varepsilon}^i) &=\frac{\displaystyle\int_\Omega x z_{\varrho,\varepsilon}^i(x,0)^{2^*_s}dx}{\displaystyle \int_\Omega z_{\varrho,\varepsilon}^i(x,0)^{2^*_s}dx} =	\frac{\displaystyle\int_\Omega x \phi_0\left(\frac{|x-a^i|}{\varrho} \right)^{2^*_s}\frac{1}{(\varepsilon^2+|x-a^i|^2)^N} dx}{\displaystyle \int_\Omega \phi_0\left(\frac{|x-a^i|}{\varrho} \right)^{2^*_s}\frac{1}{(\varepsilon^2+|x-a^i|^2)^N}dx}\\
&=\frac{\displaystyle\int_\Omega (\varepsilon x+a^i) \phi_0\left(\frac{\varepsilon|x|}{\varrho} \right)^{2^*_s}\frac{\varepsilon^{2N}}{(\varepsilon^2+|\varepsilon x|^2)^N} dx}{\displaystyle \int_\Omega \phi_0\left(\frac{\varepsilon|x|}{\varrho} \right)^{2^*_s}\frac{\varepsilon^{2N}}{(\varepsilon^2+|\varepsilon x|^2)^N}dx}\\
&=\frac{\displaystyle\int_\Omega (\varepsilon x+a^i) \phi_0\left(\frac{\varepsilon|x|}{\varrho} \right)^{2^*_s}\frac{1}{(1+|x|^2)^N} dx}{\displaystyle \int_\Omega \phi_0\left(\frac{\varepsilon|x|}{\varrho} \right)^{2^*_s}\frac{1}{(1+|x|^2)^N}dx},
\end{align*}	
and therefore, taking the limit as $\varepsilon\to 0^+$, we obtain \eqref{betaconv}. The rest of the conclusion is clear.
\end{proof}

On account of Proposition \ref{propertiesNlambda}, it makes sense to define, for every $i=1,\ldots,k$, the quantities
\[%\begin{equation}\label{defnmlambdai}
m_{\lambda,i}:=\inf_{w\in N_{\lambda,i}} J_\lambda(w), \quad \widetilde{m}_{\lambda,i}:=\inf_{w\in M_{\lambda,i}} J_\lambda(w).
\]%\end{equation}

\begin{proposition}\label{mlipositive}
For every $i=1,\ldots,k$, then $m_{\lambda,i}>0$ for all $\lambda\in(0,\lambda_1^s)$ when $q=1$; for all $\lambda>0$ when $q\in(1, 2^*_s-1)$.
\end{proposition}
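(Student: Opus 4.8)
The plan is to show that the infimum of $J_\lambda$ over $N_{\lambda,i}$ is strictly positive by combining coercivity on the Nehari manifold with a uniform lower bound on the norm of its elements. First I would recall from Proposition \ref{propertiesNlambda}$(ii)$ that $J_\lambda$ is coercive and bounded below on $N_\lambda$, hence in particular on $N_{\lambda,i}\subset N_\lambda$; this already gives $m_{\lambda,i}>-\infty$, so the real content is strict positivity. For any $w\in N_\lambda$ one has $\|w\|_{\X}^2=\int_\Omega Q(x)|w(x,0)|^{2^*_s}dx+\lambda\int_\Omega |w(x,0)|^{q+1}dx$, and plugging this into the definition of $J_\lambda$ yields, after eliminating the critical term,
\[
J_\lambda(w)=\left(\frac12-\frac{1}{2^*_s}\right)\|w\|_{\X}^2-\lambda\left(\frac{1}{q+1}-\frac{1}{2^*_s}\right)\int_\Omega|w(x,0)|^{q+1}dx,
\]
(and the analogous identity eliminating the $L^{q+1}$ term instead). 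The case $q=1$ should be handled separately using $\lambda<\lambda_1^s$ and the variational characterization $\lambda_1^s\|u\|_{L^2(\Omega)}^2\leq\|u\|_{H^s_{\SD}(\Omega)}^2$, exactly as in the proof of Proposition \ref{PScondition}$(i)$; this gives $J_\lambda(w)\geq \tfrac{s}{N}(1-\lambda/\lambda_1^s)\|w\|_{\X}^2$, which is a positive multiple of $\|w\|_{\X}^2$.

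Next I would establish that there is a constant $\delta>0$, independent of $w$, with $\|w\|_{\X}\geq\delta$ for all $w\in N_\lambda$. This follows from the trace/Sobolev inequality of Lemma \ref{lem:traceineq} together with the boundedness of $Q$ and the subcritical embedding: from the Nehari identity,
\[
\|w\|_{\X}^2=\int_\Omega Q(x)|w(x,0)|^{2^*_s}dx+\lambda\int_\Omega|w(x,0)|^{q+1}dx\leq C_1\|w\|_{\X}^{2^*_s}+C_2\lambda\|w\|_{\X}^{q+1},
\]
and since $2^*_s>2$ and $q+1>2$, dividing by $\|w\|_{\X}^2$ forces $1\leq C_1\|w\|_{\X}^{2^*_s-2}+C_2\lambda\|w\|_{\X}^{q-1}$, which cannot hold as $\|w\|_{\X}\to 0$; hence $\|w\|_{\X}\geq\delta(\lambda)>0$.

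For the case $q=1$ the combination of the two displayed facts is immediate: $m_{\lambda,i}\geq\tfrac{s}{N}(1-\lambda/\lambda_1^s)\delta^2>0$. For $q\in(1,2^*_s-1)$ one cannot directly use the first displayed identity because of the negative sign in front of the $L^{q+1}$ term; instead I would use the \emph{other} elimination, writing
\[
J_\lambda(w)=\left(\frac12-\frac{1}{q+1}\right)\|w\|_{\X}^2+\left(\frac{1}{q+1}-\frac{1}{2^*_s}\right)\int_\Omega Q(x)|w(x,0)|^{2^*_s}dx,
\]
where now \emph{both} coefficients $\tfrac12-\tfrac{1}{q+1}$ and $\tfrac{1}{q+1}-\tfrac{1}{2^*_s}$ are positive since $2<q+1<2^*_s$; dropping the nonnegative second term and using the uniform lower bound on $\|w\|_{\X}$ gives $J_\lambda(w)\geq(\tfrac12-\tfrac{1}{q+1})\delta^2>0$, uniformly over $w\in N_\lambda\supset N_{\lambda,i}$, hence $m_{\lambda,i}>0$. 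The main obstacle is really just bookkeeping: keeping track of which Nehari identity to use in which range of $q$ so that all coefficients appearing have the right sign, and verifying that the lower bound $\delta$ can be taken independent of $w$ (it may depend on $\lambda$, which is harmless here). I expect no serious difficulty, as this mirrors Lemma 3.3 of \cite{lialiuzhatan2016existence}.
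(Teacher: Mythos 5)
Your overall strategy -- combining the two Nehari-constrained expressions for $J_\lambda$ with a uniform lower bound on $\|w\|_{\X}$ over $N_\lambda$ -- is the standard one and is essentially the argument the paper delegates to \cite[Proposition 3.2]{lialiuzhatan2016existence}; the case $q\in(1,2^*_s-1)$ is complete as written. However, the step establishing $\|w\|_{\X}\geq\delta>0$ fails as stated for $q=1$, which is part of the statement you must prove: there $q+1=2$ (so your claim ``$q+1>2$'' is false) and $\|w\|_{\X}^{q-1}\equiv 1$, hence after dividing the Nehari identity by $\|w\|_{\X}^2$ you only get $1\leq C_1\|w\|_{\X}^{2^*_s-2}+C_2\lambda$, which does not preclude $\|w\|_{\X}\to 0$ once $C_2\lambda\geq 1$. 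As a result, the conclusion $m_{\lambda,i}\geq\frac{s}{N}(1-\lambda/\lambda_1^s)\delta^2$ for $q=1$ rests on a $\delta$ that your argument has not produced.

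The repair is immediate and uses exactly the ingredients you already invoked for the coercivity estimate: for $q=1$ and $\lambda\in(0,\lambda_1^s)$, bound the linear term by $\lambda\int_\Omega|w(x,0)|^2dx\leq(\lambda/\lambda_1^s)\|w\|_{\X}^2$ (spectral inequality for $(-\Delta)^s$ together with the isometry \eqref{norma2}) and absorb it into the left-hand side of the Nehari identity, obtaining $\left(1-\lambda/\lambda_1^s\right)\|w\|_{\X}^2\leq Q_M S(\SD)^{-2^*_s/2}\|w\|_{\X}^{2^*_s}$, whence $\|w\|_{\X}^{2^*_s-2}\geq\left(1-\lambda/\lambda_1^s\right)S(\SD)^{2^*_s/2}Q_M^{-1}>0$. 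With this correction both cases close as you intended: $m_{\lambda,i}\geq\frac{s}{N}\left(1-\lambda/\lambda_1^s\right)\delta^2>0$ for $q=1$, and $m_{\lambda,i}\geq\left(\frac12-\frac{1}{q+1}\right)\delta^2>0$ for $q\in(1,2^*_s-1)$, uniformly on $N_\lambda\supset N_{\lambda,i}$.
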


The proof of the proposition above follows the same arguments as \cite[Proposition 3.2]{lialiuzhatan2016existence}.

\begin{proposition}\label{tildemlambdai}
There exists $\widetilde\lambda>0$ such that
\[%\begin{equation}\label{mlambdaibelow}
\widetilde{m}_{\lambda,i}>\frac{s}{N}\cdot\frac{S(s,N)^\frac{N}{2s}}{2 Q_M^\frac{N-2s}{2s}}, 
\]%\end{equation}
for all $\lambda\in(0,\widetilde\lambda)$ and for all $i=1,\ldots,k$.
\end{proposition}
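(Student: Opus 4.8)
The plan is to argue by contradiction. Observe first that under \eqref{c=} the right-hand side of the claimed inequality is exactly the compactness threshold $c^\star=\tfrac sN S(\Sigma_{\mathcal D})^{N/2s}Q_M^{-(N-2s)/2s}$ from \eqref{cupperbound}; set $L:=\tfrac Ns c^\star$. If the statement were false, then, since $i$ runs over the finite set $\{1,\dots,k\}$, one could fix an index $i$ and a sequence $\lambda_n\to0^+$ with $\widetilde m_{\lambda_n,i}\le c^\star$ (if $M_{\lambda_n,i}=\emptyset$ there is nothing to prove, as then $\widetilde m_{\lambda_n,i}=+\infty$), hence pick $w_n\in M_{\lambda_n,i}$ with $J_{\lambda_n}(w_n)\le c^\star+\tfrac1n$. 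I would then show that such a sequence must concentrate, as $n\to\infty$, at a global maximum point of $Q$, in contradiction with the constraint $|\beta(w_n)-a^i|=r_0$ defining $M_{\lambda_n,i}$.

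First I would obtain a two-sided bound $0<\delta\le\|w_n\|_{\X}\le C$: the lower bound by inserting Lemma~\ref{lem:traceineq} in the Nehari identity (for $q=1$ one also uses $\lambda_n<\lambda_1^s$, true for large $n$), and the upper one from $J_{\lambda_n}(w_n)\ge(\tfrac12-\tfrac1{q+1})\|w_n\|_{\X}^2$ if $q>1$ and $J_{\lambda_n}(w_n)\ge\tfrac sN(1-\lambda_n/\lambda_1^s)\|w_n\|_{\X}^2$ if $q=1$. Since $q+1<2^*_s$, $|\Omega|<\infty$ and $\lambda_n\to0$, the subcritical term is negligible, $\lambda_n\|w_n(\cdot,0)\|_{L^{q+1}(\Omega)}^{q+1}=o(1)$, so the Nehari identity gives $\|w_n\|_{\X}^2=\int_\Omega Q(x)|w_n(x,0)|^{2^*_s}dx+o(1)$ and $J_{\lambda_n}(w_n)=\tfrac sN\|w_n\|_{\X}^2+o(1)$. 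Comparing with $J_{\lambda_n}(w_n)\le c^\star+o(1)$ forces $\limsup\|w_n\|_{\X}^2\le L$, while Lemma~\ref{lem:traceineq} and $Q\le Q_M$ inserted back give $\liminf\|w_n\|_{\X}^2\ge L$. Using also \eqref{c=} one concludes
\[
\|w_n\|_{\X}^2\to L,\qquad \int_\Omega Q(x)|w_n(x,0)|^{2^*_s}dx\to L,\qquad \|w_n(\cdot,0)\|_{L^{2^*_s}(\Omega)}^{2^*_s}\to\Big(\tfrac{S(\Sigma_{\mathcal D})}{Q_M}\Big)^{N/2s}=:A,
\]
whence $\int_\Omega(Q_M-Q(x))|w_n(x,0)|^{2^*_s}dx\to0$ and $\|w_n\|_{\X}^2/\|w_n(\cdot,0)\|_{L^{2^*_s}(\Omega)}^2\to S(\Sigma_{\mathcal D})$, i.e. $\{w_n\}$ is a minimizing sequence for $S(\Sigma_{\mathcal D})$.

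Next I would apply the concentration–compactness principle associated with the trace inequality of Lemma~\ref{lem:traceineq}: along a subsequence $|w_n(x,0)|^{2^*_s}dx\rightharpoonup^*\nu$ in $\mathcal M(\overline\Omega)$ and $k_sy^{1-2s}|\nabla w_n|^2dxdy\rightharpoonup^*\mu$, with $\nu(\overline\Omega)=A>0$, $\mu(\overline{\mathcal C_\Omega})\le L$, $\nu=|w_0(x,0)|^{2^*_s}dx+\sum_j\nu_j\delta_{x_j}$, $\mu\ge\sum_j\mu_j\delta_{x_j}$ and $\mu_j\ge S(\Sigma_{\mathcal D})\nu_j^{2/2^*_s}$ (the last inequality coming from localizing Lemma~\ref{lem:traceineq} around $x_j$, which is legitimate since $\X$-functions vanish on $\Sigma_{\mathcal D}^*$). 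From $\int_\Omega(Q_M-Q)\,d\nu=0$ with $Q_M-Q\ge0$ continuous, $\nu$ is supported on $\{x\in\overline\Omega:Q(x)=Q_M\}$, which by $(Q_2)$ equals $\{a^1,\dots,a^k\}$; hence $\nu=\sum_{j=1}^k c_j\delta_{a^j}$ with $c_j\ge0$, $\sum_j c_j=A$. Since $L\ge\mu(\overline{\mathcal C_\Omega})\ge S(\Sigma_{\mathcal D})\sum_j c_j^{2/2^*_s}$ and $L=\int_\Omega Q\,d\nu=Q_M\sum_j c_j$, and since $A^{2/2^*_s}=L/S(\Sigma_{\mathcal D})$, we get $\sum_j c_j^{2/2^*_s}\le(\sum_j c_j)^{2/2^*_s}$, so the strict concavity of $t\mapsto t^{2/2^*_s}$ ($0<2/2^*_s<1$) forces a single $c_j$ to be positive: $\nu=A\,\delta_{a^{j_0}}$. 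Testing the weak-$*$ convergence against $x\mapsto x$ and dividing by $\int_\Omega|w_n(x,0)|^{2^*_s}dx\to A>0$ gives $\beta(w_n)\to a^{j_0}$, so $|\beta(w_n)-a^i|=r_0$ yields $|a^{j_0}-a^i|=r_0$; this is impossible because either $j_0=i$ (giving $r_0=0$) or $j_0\ne i$ (giving $|a^{j_0}-a^i|>2r_0$ by the choice of $r_0$). This contradiction proves the proposition.

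The step I expect to be delicate is the concentration–compactness one: setting up Lions' principle in the extension/mixed-boundary setting and, in particular, the bound $\mu_j\ge S(\Sigma_{\mathcal D})\nu_j^{2/2^*_s}$ at atoms possibly lying on $\Sigma_{\mathcal N}$ or on the interface $\Gamma$ — here the fact that Lemma~\ref{lem:traceineq} holds on all of $\X$ makes its localization harmless. The remaining work is the routine bookkeeping of the energy identities forced by $\lambda_n\to0$ and $J_{\lambda_n}(w_n)\le c^\star+o(1)$.
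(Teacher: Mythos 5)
Your proposal is correct and, up to the concentration step, is the same argument as the paper's: contradiction with $\lambda_n\to0^+$, boundedness, vanishing of the subcritical term, pinning the level $l=L$ via the Sobolev lower bound and the contradiction hypothesis (here \eqref{c=} is what makes the two bounds coincide), deducing $\int_\Omega(Q_M-Q(x))|w_n(x,0)|^{2^*_s}dx\to0$ and that $\{w_n\}$ (after normalization, which the paper performs and you skip) is a minimizing sequence for $S(\Sigma_{\mathcal D})$, and finally clashing concentration at a maximizer of $Q$ with the constraint $|\beta(w_n)-a^i|=r_0$. The genuine difference is how the concentration information is obtained: the paper does not re-derive any concentration--compactness decomposition but invokes \cite[Theorem 4.5]{colort2019the}, which for minimizing sequences of $S(\Sigma_{\mathcal D})$ gives directly the dichotomy ``relative compactness'' (excluded since under \eqref{c=} the constant is not attained) or single-point Dirac concentration at some $x_0\in\overline{\Sigma}_{\mathcal N}$, after which $\beta(w_n)\to x_0$ and $Q(x_0)=Q_M$ give the contradiction. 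You instead run a Lions-type decomposition in the weighted cylinder, use $\int(Q_M-Q)\,d\nu=0$ to kill the absolutely continuous part and place all atoms in $\{a^1,\dots,a^k\}$, and force a single atom by strict subadditivity of $t\mapsto t^{2/2^*_s}$; this absorbs the ``compact'' alternative automatically (a nontrivial absolutely continuous measure cannot live on a finite set), and your endgame ($|a^{j_0}-a^i|$ is either $0$ or larger than $2r_0$, never $r_0$) is if anything a cleaner formulation of the paper's appeal to $(Q_2)$. The cost is precisely the step you flag: the decomposition with atom inequality $\mu_j\ge S(\Sigma_{\mathcal D})\nu_j^{2/2^*_s}$ in the $y^{1-2s}$-weighted, mixed-boundary extension setting is not off-the-shelf Lions; it requires localizing Lemma \ref{lem:traceineq} (legitimate, since smooth cutoffs preserve membership in $\X$) and the local compact embedding into weighted $L^2$ to handle the cross terms, i.e.\ you would essentially be re-proving the result the paper cites. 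With that step either proved or referenced, your argument is complete.
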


\begin{proof}
	Arguing by contradiction, assume that there exists a sequence $\{l_n\}\subset\R^+$, $l_n\to 0^+$, such that
	$$
	\widetilde{m}_{l_n,i}\to\widetilde{m}\leq \frac{s}{N}\cdot\frac{S(s,N)^\frac{N}{2s}}{2 Q_M^\frac{N-2s}{2s}},
	$$
for some $i\in 1,\ldots,k$, and let $\{w_n\}\subset M_{l_n,i}$ such that $J_{l_n}(w_n)\to\widetilde{m}$ as $n\to\infty$. If $q=1$ we have
\begin{align*}
	\widetilde{m}+o(1)  = J_{l_n}(w_n) &=\left( \frac{1}{2}-\frac{1}{2^*_s}\right)\left\|w_n\right\|_{\X}^2+l_n \left( \frac{1}{2}-\frac{1}{2^*_s}\right)\int_\Omega w_n(x,0)^2 dx\\
	& > \left( \frac{1}{2}-\frac{1}{2^*_s}\right)\left(1-\frac{l_n}{\lambda_{1,s}}\right)\left\|w_n\right\|_{\X}^2
\end{align*}
and hence, since $l_n\to 0$, $\{w_n\}$ is bounded in $\X$. Likewise, if $q\in(1,2^*-1)$, we get
\begin{align*}
	\widetilde{m}+o(1)  = J_{\red{l_n}}(w_n)&=\left( \frac{1}{2}-\frac{1}{q+1}\right)\left\|w_n\right\|_{\X}^2+ \left( \frac{1}{q+1}-\frac{1}{2^*_s}\right)\int_\Omega Q(x) |w_n(x,0)|^{2^*_s} dx\\
	& > \left( \frac{1}{2}-\frac{1}{q+1}\right)\left\|w_n\right\|_{\X}^2,
\end{align*}
from which, again, the boundedness of $\{w_n\}$. Since $\{w_n\}\subset N_\lambda$ and
$$
l_n\int_\Omega Q(x) |w_n(x,0)|^{q+1}dx \to 0, \quad \text{as } n\to +\infty,
$$
up to subsequences
\[
\lim_{n\to +\infty}k_s\int_{\mathcal{C}_\Omega}y^{1-2s}|\nabla w_n|^2dxdy = \lim_{n\to +\infty} \int_\Omega Q(x)|w_{n}(x,0)|^{2^*_s}dx = l>0.
\]
By Sobolev embeddings we obtain
\begin{align*}
	l & = \lim_{n\to +\infty} \int_\Omega Q(x)|w_{n}(x,0)|^{2^*_s}dx \leq Q_M \lim_{n\to +\infty} \int_\Omega |w_{n}(x,0)|^{2^*_s}dx\\
	& \leq Q_M S(\Sigma_D)^{-\frac{2^*_s}{2}}\lim_{n\to +\infty}\left( k_s\int_{\mathcal{C}_\Omega}y^{1-2s}|\nabla w_n|^2dxdy\right)^{\frac{2^*_s}{2}}\\
	&=Q_M S(\Sigma_D)^{-\frac{2^*_s}{2}}l^{\frac{2^*_s}{2}},
\end{align*}
which yields
\begin{equation}\label{llowerbound}
l\geq Q_M^{-\frac{N-2s}{2s}}S(\Sigma_D)^{\frac{N}{2s}}.
\end{equation}
One also has
$$
\frac{s}{N}\cdot\frac{S(s,N)^\frac{N}{2s}}{2 Q_M^\frac{N-2s}{2s}}\geq \widetilde{m} = \lim_{n\to +\infty}	J_{l_n}(w_n)\\ %\frac{k_s}{2}\int_{\mathcal{C}_\Omega}y^{1-2s}|\nabla w_n|^2dxdy -\frac{1}{2^*_s}\int_\Omega Q(x)w_n(x,0)^{2^*_s}dx + o(1)\\
= \left( \frac{1}{2}-\frac{1}{2^*_s}\right)l=\frac{s}{N}l, 
$$	
from which
\begin{equation}\label{lupperbound}
	l\leq \frac{S(s,N)^\frac{N}{2s}}{2Q_M^{\frac{N-2s}{2s}}}.
\end{equation}
Combining \eqref{llowerbound} and \eqref{lupperbound}, we obtain
\[%\begin{equation}\label{lbounds}
\frac{S(\Sigma_D)^\frac{N}{2s}}{Q_M^{\frac{N-2s}{2s}}}	\leq l\leq \frac{S(s,N)^\frac{N}{2s}}{2Q_M^{\frac{N-2s}{2s}}},
\]%\end{equation}
%Now we distinguish two cases.
%Case a): $S(\Sigma_D)=2^{-\frac{2s}{N}}S(s,N)$. In this situation, by \eqref{lbounds}, we immediately deduce that 
and, on account of \eqref{c=},
$$
l=\frac{S(s,N)^\frac{N}{2s}}{2Q_M^{\frac{N-2s}{2s}}}.
$$
Moreover, one has
\begin{align*}
	\frac{S(s,N)^\frac{N}{2s}}{2Q_M^{\frac{N-2s}{2s}}} & \leq Q_M\lim_{n\to +\infty}\int_\Omega |w_n(x,0)|^{2^*_s}dx\leq Q_M S(\Sigma_D)^{-\frac{2^*_s}{2}}\left(\frac{S(s,N)^\frac{N}{2s}}{2Q_M^\frac{N-2s}{2s}} \right)^\frac{N}{N-2s}\\
	& =\frac{Q_M}{Q_M^\frac{N}{2s}}\cdot \frac{2^\frac{2s}{N-2s}}{2^\frac{N}{N-2s}}\cdot S(s,N)^{-\frac{N}{N-2s}+\frac{N^2}{2s(N-2s)}}\\
	&=\frac{S(s,N)^\frac{N}{2s}}{2Q_M^{\frac{N-2s}{2s}}}.
\end{align*}	
This forces 
\begin{equation}\label{QMwn}
\lim_{n\to +\infty}\int_\Omega Q_M |w_n(x,0)|^{2^*_s}dx= \frac{S(s,N)^\frac{N}{2s}}{2Q_M^{\frac{N-2s}{2s}}},
\end{equation}
and therefore
\begin{equation}\label{QMminusQ}
	\lim_{n\to +\infty}\int_\Omega (Q_M-Q(x))|w_n(x,0)|^{2^*_s}dx=0.
\end{equation}
Next, define $z_n:=\dfrac{w_n}{\left\| w_n(\cdot,0)\right\|_{2^*_s}}$. By \eqref{QMwn} we deduce that
$$
\lim_{n\to+\infty}\int_\Omega |w_n(x,0)|^{2^*_s} dx = \frac{S(s,N)^\frac{N}{2s}}{2Q_M^{\frac{N}{2s}}}
$$
and therefore
$$
\lim_{n\to+\infty} \left\| w_n(\cdot,0)\right\|_{2^*_s}^2 = \left( \frac{S(s,N)^\frac{N}{2s}}{2Q_M^{\frac{N}{2s}}}\right)^\frac{N-2s}{N} = \left( \frac{S(\Sigma_D)}{Q_M}\right)^\frac{N-2s}{2s}.
$$
So,
\begin{align*}
\lim_{n\to +\infty}k_s\int_{\mathcal{C}_\Omega}y^{1-2s}|\nabla z_n|^2 dxdy &=\lim_{n\to +\infty}\frac{\displaystyle k_s\int_{\mathcal{C}_\Omega}y^{1-2s}|\nabla w_n|^2 dxdy}{\left\| w_n(\cdot,0)\right\|_{2^*_s}^2}=\frac{S(\Sigma_D)^\frac{N}{2s}}{Q_M^\frac{N-2s}{2s}}\cdot \frac{Q_M^\frac{N-2s}{2s}}{S(\Sigma_D)^\frac{N-2s}{2s}}\\&=S(\Sigma_D),
\end{align*}
i.e., $\{z_n\}$ is a minimizing sequence for $S(\Sigma_D)$. Now, appealing to \cite[Theorem 4.5]{colort2019the}, two alternatives can occur.

i) $\{z_n\}$ is relatively compact. If so, up to a subsequence, we have $z_n\to z_0\in \X$ and $\left\| z_n\right\|_{\X}^2 \to \left\| z_0\right\|_{\X}^2$. But, from what seen before, $\left\| z_n\right\|_{\X}^2 \to S(\Sigma_D)= 2^{-\frac{2s}{N}}S(s,N)$, meaning that $S(s,N)$ is attained, a contradiction.

ii) $z_n\rightharpoonup 0$ and there exist a subsequence, still denoted by $\{z_n\}$ and $x_0\in\overline{\Sigma}_N$:
\begin{equation}\label{deltaconv}
	k_s y^{1-2s} |\nabla z_n|^2 \rightharpoonup \widetilde{S}(\Sigma_D)\delta_{x_0},\quad
	|z_n(\cdot,0)|^{2^*_s} \rightharpoonup \delta_{x_0}, \quad \text{as } n\to +\infty,
\end{equation}
in the sense of measures. So one has
$$
\beta(z_n)=\frac{\displaystyle\int_\Omega x |z_n(x,0)|^{2^*_s}dx}{\displaystyle\int_\Omega  |z_n(x,0)|^{2^*_s}dx}\to x_0,
$$
and since $|\beta(z_n)-a^i|=r_0$, by the continuity of $\beta$ it must be $x_0\neq a^i$. Finally, collecting \eqref{QMminusQ} and \eqref{deltaconv}, we arrive at $Q_M=Q(x_0)$, against the assumption $(Q_2)$. This ends the proof.
\end{proof}	

The following lemma is a direct consequence of the implicit function theorem and its proof relies upon the same arguments as \cite[Lemma 2.4]{tar1992on}.

\begin{lemma}\label{lemmatarantello}
Let $i\in {1,\dots,k}$. Then, for all $w\in N_{\lambda,i}$ there exists $R>0$ and a differentiable functional $\psi:B(0,R)\subset\X\to[0,+\infty)$ such that
\begin{align*}
	\psi(0) & =1,\quad \psi(z)(w-z)\in N_{\lambda,i}, \; \text{ for all } z\in B(0,R),\\
	\psi'(0)(z) & =\frac{2 k_s \displaystyle\int_{\mathcal C_{\Omega}}y^{1-2s}\nabla w\nabla z dxdy - 2^*_s\int_\Omega Q(x)|w(x,0)|^{2^*_s-1}z(x,0) dx}{\displaystyle(1-q)k_s\int_{\mathcal{C}_\Omega}y^{1-2s}|\nabla w|^2 dxdy - (2^*_s-q-1)\int_\Omega Q(x) |w(x,0)|^{2^*_s}dx}\\
	&\;\;\; -\frac{\displaystyle\lambda(q+1)\int_\Omega |w(x,0)|^q z(x,0)dx}{\displaystyle(1-q)k_s\int_{\mathcal{C}_\Omega}y^{1-2s}|\nabla w|^2 dxdy - (2^*_s-q-1)\int_\Omega Q(x)|w(x,0)|^{2^*_s}dx},\\ 
	&\;\;\; \text{for all } z\in C_0^\infty(\mathcal C_{\Omega}).
\end{align*}	
\end{lemma}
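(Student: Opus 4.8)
The plan is to realise $\psi$ as the scalar which projects the ray $\{t(w-z):t>0\}$ back onto the Nehari manifold, via the implicit function theorem, following the scheme of \cite[Lemma 2.4]{tar1992on}. Write the Nehari functional as
\[
G(v):=\|v\|_{\X}^2-\int_\Omega Q(x)|v(x,0)|^{2^*_s}dx-\lambda\int_\Omega|v(x,0)|^{q+1}dx,
\]
so that $N_\lambda=\{v\in\X\setminus\{0\}:G(v)=0\}$, and define $F:\R\times\X\to\R$ by $F(t,z):=G\big(t(w-z)\big)$. Since the trace operator is bounded and linear from $\X$ into $L^{2^*_s}(\Omega)\cap L^{q+1}(\Omega)$ (Lemma \ref{lem:traceineq} together with $q+1<2^*_s$) and the corresponding Nemytskii functionals $u\mapsto\int_\Omega Q(x)|u|^{2^*_s}dx$, $u\mapsto\int_\Omega|u|^{q+1}dx$ are of class $C^1$, the functional $G$ is $C^1$ on $\X$; hence $F$ is $C^1$ jointly in $(t,z)$, and $F(1,0)=G(w)=0$ because $w\in N_\lambda$.

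The decisive point is to check that $\partial_tF(1,0)\neq0$. Differentiating,
\[
\partial_tF(1,0)=2\|w\|_{\X}^2-2^*_s\int_\Omega Q(x)|w(x,0)|^{2^*_s}dx-\lambda(q+1)\int_\Omega|w(x,0)|^{q+1}dx,
\]
and substituting the Nehari identity $\|w\|_{\X}^2=\int_\Omega Q(x)|w(x,0)|^{2^*_s}dx+\lambda\int_\Omega|w(x,0)|^{q+1}dx$ turns this into
\[
\partial_tF(1,0)=(2-2^*_s)\int_\Omega Q(x)|w(x,0)|^{2^*_s}dx+(1-q)\lambda\int_\Omega|w(x,0)|^{q+1}dx<0,
\]
because $2^*_s>2$, $q\geq1$, $Q>0$ on $\overline\Omega$ and $w(\cdot,0)\not\equiv0$; recalling $\|w\|_{\X}^2=k_s\int_{\mathcal C_\Omega}y^{1-2s}|\nabla w|^2dxdy$, this quantity is exactly the denominator appearing in the statement, so in particular it does not vanish. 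The implicit function theorem then yields $R>0$ and $\psi\in C^1(B(0,R),\R)$ with $\psi(0)=1$ and $F(\psi(z),z)=0$ on $B(0,R)$; shrinking $R$, the continuity of $\psi$ gives $\psi>0$ there (hence $\psi$ has values in $[0,+\infty)$) and $w-z\neq0$, so that $\psi(z)(w-z)\in N_\lambda$.

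It remains to sharpen membership from $N_\lambda$ to $N_{\lambda,i}$ and to compute $\psi'(0)$. For the first point I would use that the barycenter is invariant under positive dilations, so $\beta(\psi(z)(w-z))=\beta(w-z)$; since $w\in N_{\lambda,i}$ means $|\beta(w)-a^i|<r_0$ with \emph{strict} inequality, continuity of $\beta$ lets us shrink $R$ once more so that $|\beta(w-z)-a^i|<r_0$ for all $z\in B(0,R)$, i.e. $\psi(z)(w-z)\in N_{\lambda,i}$. For the second, differentiate $F(\psi(z),z)=0$ at $z=0$: from $D_zF(1,0)(z)=-G'(w)(z)$ one obtains $\psi'(0)(z)=G'(w)(z)/\partial_tF(1,0)$, and inserting
\[
G'(w)(z)=2k_s\int_{\mathcal C_\Omega}y^{1-2s}\nabla w\cdot\nabla z\,dxdy-2^*_s\int_\Omega Q(x)|w(x,0)|^{2^*_s-1}z(x,0)\,dx-\lambda(q+1)\int_\Omega|w(x,0)|^{q}z(x,0)\,dx
\]
for $z\in C_0^\infty(\mathcal C_\Omega)$ reproduces exactly the claimed formula.

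The only genuine obstacle is the nondegeneracy $\partial_tF(1,0)\neq0$ — which is precisely where the structural restrictions $q\geq1$ and $q<2^*_s-1$ enter; everything else is a routine packaging of the Banach-space implicit function theorem with the scale invariance of $\beta$. One should merely keep in mind that the $C^1$-regularity of $G$ is borderline at the critical trace exponent (but still valid in this setting), and that for $q>1$ a positive dilation of $w-z$ onto $N_\lambda$ genuinely exists for small $z$ since $\int_\Omega Q(x)|(w-z)(x,0)|^{2^*_s}dx>0$ at $z=0$.
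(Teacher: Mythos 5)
Your proposal is correct and follows essentially the same route as the paper, which simply invokes the implicit function theorem in the manner of Tarantello's Lemma 2.4: you apply the IFT to $F(t,z)=G\bigl(t(w-z)\bigr)$ at $(1,0)$, verify the nondegeneracy $\partial_tF(1,0)\neq0$ via the Nehari identity (indeed it equals the denominator in the statement and is strictly negative since $1\leq q<2^*_s-1$ and $w(\cdot,0)\not\equiv0$), and recover the formula for $\psi'(0)$ by implicit differentiation. Your additional remarks (the $C^1$ regularity of the critical Nemytskii term through the trace embedding, and the scale invariance plus continuity of $\beta$ to pass from $N_\lambda$ to the open set $N_{\lambda,i}$) are exactly the details the paper leaves implicit, so there is nothing to correct.
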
 

Collecting the previous results, we can prove now the main multiplicity result for \eqref{problem}.

\begin{theorem}\label{thmksolutions}
Assume $(Q_2)$ where
\begin{align*}
	q\in \displaystyle\left( \frac{6s-N}{N-2s},\frac{N+2s}{N-2s}\right)  & \text{ if }  N=2,3,\\
	q\in\displaystyle \left[ 1,  \frac{N+2s}{N-2s}\right)  & \text{ if } N\geq 4.
\end{align*}
Then there exists $\lambda^\star>0$ such that \eqref{problem} has at least $k$ positive solutions for all $\lambda\in(0,\lambda^\star)$.
\end{theorem}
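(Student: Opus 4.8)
The plan is to capture each maximum point $a^i$ of $Q$ through a minimizer of $J_\lambda$ constrained to the piece $N_{\lambda,i}$ of the Nehari manifold, keeping the $k$ minimizers apart by means of the barycenter map $\beta$; since this mechanism works only for small $\lambda$, we set $\lambda^\star:=\min\{\widetilde\lambda,\lambda_1^s\}$, with $\widetilde\lambda$ as in Proposition \ref{tildemlambdai}. Fix $i\in\{1,\ldots,k\}$ and $\lambda\in(0,\lambda^\star)$. The first task is to establish the energy window
\[
m_{\lambda,i}<c^\star=\frac{s}{N}\frac{S(s,N)^{N/2s}}{2Q_M^{(N-2s)/2s}}<\widetilde{m}_{\lambda,i}.
\]
The right inequality is exactly Proposition \ref{tildemlambdai}. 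For the left one, note that $a^i$ is a strict global maximizer of $Q$ with the growth prescribed by $(Q_2)$, i.e. the situation of cases $(Q_1)$-$(ii)$/$(iii)$ with $a^i$ in place of $a^0$; hence Lemma \ref{propvarphilambda} and the estimates in Proposition \ref{supfiberingmap} apply verbatim to $z^i_{\varrho,\varepsilon}$ and give $\sup_{t\ge0}J_\lambda(tz^i_{\varrho,\varepsilon})<c^\star$ for $\varepsilon$ small (for every $\lambda>0$ if $q>1$, for $\lambda\in(0,\lambda_1^s)$ if $q=1$). After further shrinking $\varepsilon$, Proposition \ref{betaconvergence} gives $t_{z^i_{\varrho,\varepsilon}}z^i_{\varrho,\varepsilon}\in N_{\lambda,i}$, so $m_{\lambda,i}\le J_\lambda(t_{z^i_{\varrho,\varepsilon}}z^i_{\varrho,\varepsilon})=\sup_{t\ge0}J_\lambda(tz^i_{\varrho,\varepsilon})<c^\star$.

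Next I would extract a convergent Palais--Smale sequence at level $m_{\lambda,i}$. Since $J_\lambda$ is coercive and bounded below on $N_\lambda$ (Proposition \ref{propertiesNlambda}) and $N_\lambda$ is bounded away from the origin, the closed set $\overline{N_{\lambda,i}}\subseteq N_{\lambda,i}\cup M_{\lambda,i}$ is complete, and $\inf_{\overline{N_{\lambda,i}}}J_\lambda=m_{\lambda,i}$ because $J_\lambda\ge\widetilde{m}_{\lambda,i}>m_{\lambda,i}$ on $M_{\lambda,i}$. Ekeland's variational principle then produces a minimizing sequence $\{w_n\}$ which, for $n$ large, lies in the relatively open set $N_{\lambda,i}$; invoking Lemma \ref{lemmatarantello} to estimate the Lagrange multiplier and the natural-constraint property of $N_\lambda$ (Proposition \ref{propertiesNlambda}(iii)), $\{w_n\}$ is a $(PS)_{m_{\lambda,i}}$ sequence for $J_\lambda$ on $\X$. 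Because $m_{\lambda,i}<c^\star$ and $\lambda\in(0,\lambda_1^s)$ if $q=1$ (resp. $\lambda>0$ if $q>1$), Proposition \ref{PScondition} applies and, up to a subsequence, $w_n\to w_i$ strongly in $\X$, so $J_\lambda(w_i)=m_{\lambda,i}$ and $J_\lambda'(w_i)=0$.

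It remains to conclude. By continuity of $\beta$, $|\beta(w_i)-a^i|\le r_0$; were equality to hold, $w_i\in M_{\lambda,i}$ and $J_\lambda(w_i)\ge\widetilde{m}_{\lambda,i}>m_{\lambda,i}$, a contradiction, so $w_i\in N_{\lambda,i}$ and $m_{\lambda,i}$ is attained. Moreover $w_i\ne0$, as $m_{\lambda,i}>0$ by Proposition \ref{mlipositive}. Since $\overline{B_{r_0}(a^i)}\cap\overline{B_{r_0}(a^j)}=\emptyset$ for $i\ne j$, the sets $N_{\lambda,i}$ are pairwise disjoint, hence $w_1,\ldots,w_k$ are $k$ distinct nontrivial critical points of $J_\lambda$. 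Finally, using $J_\lambda(w)=J_\lambda(|w|)$ and $\beta(|w|)=\beta(w)$ we may take each $w_i\ge0$, and the strong maximum principle upgrades this to $w_i>0$ in $\mathcal C_\Omega$; the traces $u_i=\Tr(w_i)$ furnish $k$ distinct positive solutions of \eqref{problem} for every $\lambda\in(0,\lambda^\star)$.

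The main obstacle is the passage in the second paragraph: converting the minimization over the non-closed set $N_{\lambda,i}$ into a compactness statement for the free functional. This rests on the Tarantello-type deformation of Lemma \ref{lemmatarantello}, which turns an Ekeland minimizing sequence into a Palais--Smale sequence for $J_\lambda$ on $\X$, and on the sharp estimates of the first paragraph, which place $m_{\lambda,i}$ strictly inside the interval $(0,c^\star)$ where $(PS)$ holds and strictly below $\widetilde{m}_{\lambda,i}$, so that the weak limit cannot drift to the boundary $M_{\lambda,i}$; it is exactly this chain of strict inequalities that imposes the smallness of $\lambda$.
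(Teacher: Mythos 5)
Your proposal is correct and follows essentially the same route as the paper: the chain $m_{\lambda,i}\le\sup_{t\ge0}J_\lambda(tz^i_{\varrho,\varepsilon})<c^\star<\widetilde m_{\lambda,i}$ via Propositions \ref{betaconvergence}, \ref{supfiberingmap} and \ref{tildemlambdai}, then Ekeland's principle combined with the Tarantello-type Lemma \ref{lemmatarantello} to produce a $(PS)_{m_{\lambda,i}}$ sequence, compactness from Proposition \ref{PScondition} with $\lambda^\star=\min\{\widetilde\lambda,\lambda_1^s\}$, and the maximum principle to conclude. Your explicit exclusion of the boundary $M_{\lambda,i}$ and the disjointness argument for distinctness only spell out details the paper leaves implicit.
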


\begin{proof}
Thanks to Propositions \eqref{betaconvergence}, \eqref{tildemlambdai} and Proposition \eqref{supfiberingmap}, used here replacing $a^0$ and $z^0_{\varrho,\varepsilon}$ by $a^i$ and $z^i_{\varrho,\varepsilon}$, respectively, one has the following chain of inequalities
\begin{equation}\label{mlimtildeli}
m_{\lambda,i}\leq J_\lambda(t_{z_{\varrho,\varepsilon}^i}z_{\varrho,\varepsilon}^i)=\sup_{t\geq 0} J_\lambda(t z_{\varrho,\varepsilon}^i) < \frac{s}{N}\frac{S(s,N)^\frac{N}{2s}}{2Q_M^\frac{N-2s}{2s}}<\widetilde{m}_{\lambda,i},
\end{equation}
for every $\lambda\in(0,\widetilde\lambda)$ and, as a result,
$$
m_{\lambda,i}=\inf_{w\in N_{\lambda,i}\cup M_{\lambda,i}}J_\lambda(w),
$$
for every $\lambda\in(0,\widetilde\lambda)$. Let $\{w_{n,i}\}\subset N_{\lambda,i}\cup M_{\lambda,i}$ be a minimizing sequence for $m_{\lambda,i}$. Since $J_\lambda(|w|)=J_\lambda(w)$, we can assume that $w_{n,i}\geq 0$ a.e. in $\mathcal C_\Omega$. Our goal is to show that $\{w_{n,i}\}\subset N_{\lambda,i}\cup M_{\lambda,i}$ is a $(PS)_{m_{\lambda,i}}$-sequence for $J_\lambda$. By Ekeland variational principle, up to subsequences, one has
\begin{equation}\label{ekeland}
	\begin{split}
		J_\lambda(w_{n,i}) & <m_{\lambda,i} + \frac{1}{n},\\
		J_\lambda(w) & \geq J_\lambda(w_{n,i})-\frac{1}{n}\left\| w-w_{n,i}\right\|_{\X}, \quad \text{for all } w\in N_{\lambda,i}. 
	\end{split}
\end{equation} 
Moreover, by Lemma \ref{lemmatarantello}, there exist $R_{n,i}>0$ and $\psi_{n,i}:B(0,R_{n,i})\to[0,+\infty)$, differentiable, so that
$$
\psi_{n,i}(0)=1, \quad \psi_{n,i}(w)(w_{n,i}-w)\in N_{\lambda,i}, \text{ for all } w\in B(0, R_{n,i}).
$$
Choosing $\varphi\in \X$, $\left\| \varphi\right\|_{\X}=1$ and $\sigma\in(0, R_{n,i})$ % and set $w=\sigma\varphi$.
by \eqref{ekeland} and the mean value theorem one gets, for some $t_0\in(0,1)$,
\begin{align*}
	&\frac{\left\| \psi_{n,i}(\sigma\varphi)(w_{n,i}-\sigma\varphi)-w_{n,i}\right\|_{\X}}{n}  \geq J_\lambda(w_{n,i}) - J_\lambda\left(\psi_{n,i}(\sigma\varphi)(w_{n,i}-\sigma\varphi)\right)\\
	& = J_\lambda'\left(t_0 w_{n,i}+(1-t_0)\psi_{n,i}(\sigma\varphi)(w_{n,i}-\sigma\varphi)\right)\left(w_{n,i}-\psi_{n,i}(\sigma\varphi)(w_{n,i}-\sigma\varphi)\right)\\
	& = J'_\lambda(w_{n,i})\left(w_{n,i}-\psi_{n,i}(\sigma\varphi)(w_{n,i}-\sigma\varphi)\right) + o\left(  \left\| w_{n,i}-\psi_{n,i}(\sigma\varphi)(w_{n,i}-\sigma\varphi)\right\|_{\X}\right)\\
	& =\left( 1-\psi_{n,i}(\sigma\varphi)\right) J'_\lambda(w_{n,i})(w_{n,i}) + \sigma\psi_{n,i}(\sigma\varphi)J'_\lambda(w_{n,i})(\varphi) \\
	& \;\;\; + o\left(  \left\| w_{n,i}-\psi_{n,i}(\sigma\varphi)(w_{n,i}-\sigma\varphi)\right\|_{\X}\right)\\
	& = \sigma\psi_{n,i}(\sigma\varphi)J'_\lambda(w_{n,i})(\varphi) + o\left(  \left\| w_{n,i}-\psi_{n,i}(\sigma\varphi)(w_{n,i}-\sigma\varphi)\right\|_{\X}\right). 
\end{align*}
Therefore we deduce
\begin{align*}
	|J'_\lambda(w_{n,i})(\varphi)| & \leq \frac{\displaystyle\left\|w_{n,i}-\psi_{n,i}(\sigma\varphi)(w_{n,i}-\sigma\varphi) \right\|_{\X} \left(\frac{1}{n}+|o(1)|\right)}{\sigma|\psi_{n,i}(\sigma\varphi)|}\\
	& = \frac{\displaystyle\left\|w_{n,i}\psi_{n,i}(0)-\psi_{n,i}(\sigma\varphi)w_{n,i} + \sigma\psi_{n,i}(\sigma\varphi)\varphi \right\|_{\X} \left(\frac{1}{n}+|o(1)|\right)}{\sigma|\psi_{n,i}(\sigma\varphi)|}\\
	&\leq \left( \left\|w_{n,i}\right\|_{\X}\frac{|\psi_{n,i}(\sigma\varphi)-\psi_{n,i}(0)|}{\sigma|\psi_{n,i}(\sigma\varphi)|} + 1 \right) \left(\frac{1}{n}+|o(1)|\right) 
\end{align*}
and taking the limit as $\sigma \to 0^+$, thanks to the boundedness of $\{w_{n,i}\}$ and $\{\psi'_{n,i}(0)\}$, we obtain ${J'_\lambda}(w_{n,i})\to 0$ as $n\to \infty$. So $\{w_{n,i}\}$ is a $(PS)$ sequence at the level $m_{\lambda,i}$ for $J_\lambda$. Now, setting $\lambda^\star:=\min\{\widetilde\lambda,\lambda_{1,s}\}$,  for every $\lambda\in(0,\lambda^\star)$, by \eqref{mlimtildeli} and Proposition \ref{PScondition}, $J_\lambda$ satisfies the $(PS)_{m_{\lambda,i}}$ condition and hence there exists $w_i\in\X$, $w_i\geq 0$ in $\mathcal C_\Omega$, such that $w_{n,i}\to w_i$ in $\X$ for every $i=1,\ldots,k$. Then, as a consequence of Ekeland variational principle and Propositions \ref{mlipositive} and \ref{propertiesNlambda}, $w_i$ is a non-negative and nontrivial solution of \eqref{problem} with energy $J_\lambda(w_i)=m_{\lambda,i}$ for every $i=1,\ldots,k$. The strong maximum principle forces $w_i>0$ and this concludes the proof.
\end{proof}	
%%%%%%%%%%%%%%%%%%%%%%%%%%%%

\end{document}